\definecolor{blue}{rgb}{0,0,1}
\newtheorem{theorem}{Theorem}[section]
\newtheorem{prop}[theorem]{Proposition}
\newtheorem{lemma}[theorem]{Lemma}
\newtheorem{cor}[theorem]{Corollary}
\theoremstyle{definition}
\newtheorem{definition}{Definition}[section]
\theoremstyle{remark}
\newtheorem{remark}{Remark}[section]
\numberwithin{equation}{section}
\newcommand{\vertiii}[1]{{\left\vert\kern-0.25ex\left\vert\kern-0.25ex\left\vert #1
    \right\vert\kern-0.25ex\right\vert\kern-0.25ex\right\vert}}
\def \L {\mathcal{L}} 
\def \N {\mathbb{N}}  
\def \R {\mathbb{R}}  
\def\RN {\mathbb{R}^{N}} 
\def \Linfty {L^{\infty}(\Omega)}
\def \sLap {(-\Delta)^{s}}
\def \gradu {\nabla u}
\def \gradv {\nabla v}
\newcommand{\supp}{\operatorname{Supp}}
\renewcommand{\epsilon} {\varepsilon}
\newcommand{\esssup}{\operatorname{esssup}}
\newcommand{\dist}{\operatorname{dist}}
\newcommand{\D }{\Delta }
\newcommand{\e }{\varepsilon }
\renewcommand{\l }{\lambda }
\renewcommand{\L }{\Lambda }
\newcommand{\n }{\nabla }
\renewcommand{\O }{\Omega }
\newcommand{\re}{\mathbb{R}}
\newcommand{\ren}{\re^N}
\newcommand{\dyle}{\displaystyle}
\newcommand{\irn}{\int_{\re^N}}
\newcommand{\io}{\int\limits_\O}
\begin{document}

\title[Nonlinear fractional Laplacian problems with nonlocal ``gradient terms'']{Nonlinear fractional Laplacian problems with nonlocal ``gradient terms''}


\author[Boumediene Abdellaoui and Antonio J. Fern\'andez]{}
\address{}
\curraddr{}
\email{}
\thanks{}



\subjclass[2010]{35B65, 35J62, 35R09, 47G20}

\keywords{Fractional Laplacian, nonlocal gradient, Calder\'on-Zygmund, regularity, fractional Poisson equation}

\date{}

\dedicatory{}

\maketitle

\vspace{-0.03cm}
\centerline{\scshape Boumediene Abdellaoui}
\smallskip
{\footnotesize
 \centerline{Laboratoire d'Analyse Nonlin\'eaire et Math\'ematiques Appliqu\'ees, D\'epartement de Math\'ematiques,}
 \centerline{Universit\'e Abou Bakr Belka\"id, Tlemcen, 13000, Algeria}
  \vspace{0.05cm}
\centerline{\textit{E-Mail address} : \texttt{boumediene.abdellaoui@inv.uam.es}}
}

\bigskip

\centerline{\scshape Antonio J. Fern\'andez}
\smallskip
{\footnotesize
 \centerline{Univ. Valenciennes, EA 4015 - LAMAV - FR CNRS 2956, F-59313 Valenciennes, France}
 \vspace{0.09cm}
   \centerline{Laboratoire de Math\'ematiques (UMR 6623), Universit\'e de Bourgogne-Franche-Comt\'e,}
 \centerline{16 route de Gray, 25030 Besan\c con Cedex, France}
 \vspace{0.05cm}
\centerline{\textit{E-Mail address} : \texttt{antonio\_jesus.fernandez\_sanchez@univ-fcomte.fr}}
} 

\begin{center}\rule{1\textwidth}{0.1mm} \end{center}
\begin{abstract}
Let $\Omega \subset \RN$, $N \geq 2$, be a smooth bounded domain. For $s \in (1/2,1)$, we consider a problem of the form
\begin{equation*} 
\left\{
\begin{aligned}
(-\Delta)^s u & = \mu(x)\, \mathbb{D}_s^{2}(u) + \lambda f(x)\,, & \quad \textup{ in } \Omega,\\
u & = 0\,, & \quad \textup{ in } \RN \setminus \Omega,
\end{aligned}
\right.
\end{equation*}
\noindent where $\lambda > 0$ is a real parameter, $f$ belongs to a suitable Lebesgue space, $\mu \in \Linfty$ and $\mathbb{D}_s^2$ is a nonlocal ``gradient square'' term given by
\begin{equation*}
\mathbb{D}_s^2 (u) =  \frac{a_{N,s}}{2}\textup{ p.v.}  \int_{\RN} \frac{|u(x)-u(y)|^2}{|x-y|^{N+2s}} dy \,.
\end{equation*}
Depending on the real parameter $\lambda > 0$, we derive existence and non-existence results. The proof of our existence result relies on sharp Calder\'on-Zygmund type regularity results for the fractional Poisson equation with low integrability data. We also obtain existence results for related problems involving different nonlocal diffusion terms.
\end{abstract}

\begin{center} \rule{1 \textwidth}{0.1mm} \end{center}

\section{Introduction and main results} \label{1}

In the last fifteen years, there has been an increasing interest in the study of partial differential equations involving integro-differential operators. In particular, the case of the fractional Laplacian has been widely studied and is nowadays a very active field of research. This is due not only to its mathematical richness. The fractional Laplacian has appeared in a great number of equations modeling real world phenomena, especially those which take into account nonlocal effects. Among others, let us mention applications in quasi-geostrophic flows \cite{C_V_2010}, quantum mechanic \cite{L_2000}, mathematical finances \cite{Apple_2004, C_T_2004}, obstacle problems \cite{B_F_RO_2018, B_F_RO_2018-2, C_F_2013} and crystal dislocation \cite{DP_P_V_2015, DP_F_V_2014, T_1997}.

\medbreak
The first aim of the present paper is to discuss, depending on the real parameter $\lambda > 0$, the existence and non-existence of solutions to the Dirichlet problem
\[ \tag{$P_{\lambda}$} \label{Plambda}
\left\{
\begin{aligned}
(-\Delta)^s u & = \mu(x)\, \mathbb{D}_s^2(u) + \lambda f(x)\,, & \quad \textup{ in } \Omega,\\
u & = 0\,, & \quad \textup{ in } \RN \setminus \Omega,
\end{aligned}
\right.
\]
under the assumption
\[ \tag{$A_1$}  \label{A1}
\left\{
\begin{aligned}
& \Omega \subset \RN,\ N \geq 2, \textup{ is a bounded domain with }\partial \Omega \textup{ of class } \mathcal{C}^{2}, \\
& s \in (1/2,1),\\
& f \in L^m(\Omega) \textup{ for some } m > N/2s \textup{ and } \mu \in \Linfty.\\
\end{aligned}
\right.
\] 
Throughout the work, $\sLap$ stands for the, by know classical, fractional Laplacian operator. For a smooth function $u$ and $s \in (0,1)$, it can be defined as
\[ \sLap u(x):= a_{N,s}\textup{ p.v.}\int_{\RN} \frac{u(x)-u(y)}{|x-y|^{N+2s}} dy,\]
where
\[ a_{N,s} := \left( \int_{\RN} \frac{1-\cos(\xi_1)}{|\xi|^{N+2s}} d \xi \right)^{-1} = -\, \frac{ 2^{2s} \Gamma \left( \frac{N}{2}+s \right)}{ \pi^{\frac{N}{2}} \Gamma(-s) },\]
is a normalization constant and ``p.v.'' is an abbreviation for ``in the principal value sense''. In \eqref{Plambda}, $\mathbb{D}_s^2$ is a nonlocal diffusion term. It plays the role of the ``gradient square'' in the nonlocal case and is given by
\begin{equation} \label{nonlocal gradient}
 \mathbb{D}_s^2 (u) =  \frac{a_{N,s}}{2}\textup{ p.v.} \int_{\RN} \frac{|u(x)-u(y)|^2}{|x-y|^{N+2s}} dy \,.
\end{equation}
Since they will not play a role in this work, we normalize the constants appearing in the definitions of $\sLap$ and $\mathbb{D}_s^2$ and we omit the p.v. sense. However, let us stress that these constants guarantee
\begin{equation} \label{limit fractional Laplacian}
 \lim_{s \to 1^{-}} \sLap u(x) = -\Delta u (x), \quad \forall\ u \in \mathcal{C}_0^{\infty}(\RN),
\end{equation}
and
\begin{equation} \label{limit gradient term}
\lim_{s \to 1^{-}} \mathbb{D}_s^2(u(x))= |\nabla u(x)|^2, \quad \forall\ u \in \mathcal{C}_0^{\infty}(\RN).
\end{equation}
We refer to \cite{DN_P_V_2012} and \cite{C_D_2018} respectively for a proof of \eqref{limit fractional Laplacian} and \eqref{limit gradient term}. Hence, at least formally, if $s \to 1^{-}$ in \eqref{Plambda}, we recover the local problem
\begin{equation} \label{local}
\left\{
\begin{aligned}
-\Delta u & = \mu(x) |\nabla u |^{2} + \lambda f(x), \quad & \textup{ in } \Omega, \\
u & = 0, & \textup{ on } \partial \Omega.
\end{aligned}
\right.
\end{equation}
This equation corresponds to the stationary case of the Kardar-Parisi-Zhang model of growing interfaces introduced in \cite{K_P_Z_1986}. The existence and multiplicity of solutions to problem \eqref{local} and of its different extensions have been extensively studied and it is still an active field of research. See for instance \cite{A_DA_P_2006, F_M_2000, G_M_P_2006, B_M_P_1992, A_DC_J_T_2015, DC_F_2018}. In most of these papers, the existence of solutions is proved using either a priori estimates or, when it is possible, a suitable change of variable to obtain an equivalent semilinear problem. However, neither of these techniques seem to be appropriate to deal with the nonlocal case \eqref{Plambda}.
\medbreak
Let us also point out that in \cite{C_V_2014-JFA}, using pointwise estimates on the Green function for the fractional Laplacian, the authors deal with the nonlocal-local problem
\begin{equation} \label{nonv}
\left\{
\begin{aligned}
(-\Delta)^s u & =|\nabla u |^q + \lambda f(x), \quad & \textup{ in } \Omega, \\
u & = 0, & \textup{ in }  \RN \setminus \Omega.
\end{aligned}
\right.
\end{equation}
For $s \in (1/2,1)$, $1 < q  < \frac{N}{N-(2s-1)}$, $f \in L^1(\Omega)$ and $\lambda > 0$ small enough they obtained the existence of a solution to \eqref{nonv}. This existence result was later completed in \cite{A_P_2018} where, under suitable assumptions on $f$, the authors showed the existence of a solution to \eqref{nonv} for all $1 < q < \infty$ and $\lambda > 0$ small enough.

\medbreak
Following \cite{C_V_2014-JFA,C_V_2014-JDE} we introduce the following notion of weak solution to \eqref{Plambda}.

\begin{definition} \label{weak Sol Plambda}
We say that $u$ is a\textit{ weak solution} to \eqref{Plambda} if $u$ and $\mathbb{D}_s^2(u)$ belong to $L^1(\Omega)$, $u \equiv 0$ in $\mathcal{C}\Omega := \RN \setminus \Omega$ and
\begin{equation}
\int_{\Omega} u \sLap \phi\, dx = \int_{\Omega} \big( \mu(x)\mathbb{D}_s^2(u)+ \lambda f(x) \big) \phi\, dx, \quad \forall\  \phi \in \mathbb{X}_s,
\end{equation}
where
\begin{equation} \label{Xs}
 \mathbb{X}_s := \Big\{ \xi \in \mathcal{C}(\RN): \supp{\xi} \subset \overline{\Omega},\ \sLap \xi(x) \textup{ exists }\forall\ x  \in \Omega \textup{ and } |\sLap \xi(x)| \leq C \textup{ for some } C > 0\, \Big\}.
\end{equation}
\end{definition}

In the spirit of the existing results for the local case, our first main result shows the existence of a weak solution to \eqref{Plambda} under a smallness condition on $\lambda f$.

\begin{theorem} \label{main th existence}
Assume that \eqref{A1} holds. Then there exists $\lambda^{\ast} > 0$ such that, for all $0 < \lambda \leq \lambda^{\ast}$, \eqref{Plambda} has a weak solution $u \in W_0^{s,2}(\Omega) \cap \mathcal{C}^{0,\alpha}(\Omega)$ for some $\alpha > 0$.
\end{theorem}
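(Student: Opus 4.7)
The plan is to produce the solution as a fixed point of the map $T$ that sends $v$ to the unique solution $u$ of the linear fractional Poisson problem
\begin{equation*}
(-\Delta)^s u = \mu(x)\,\mathbb{D}_s^2(v) + \lambda f(x) \text{ in } \Omega, \qquad u = 0 \text{ in } \RN\setminus\Omega,
\end{equation*}
and to apply Schauder's fixed point theorem in a small closed ball of a suitable Banach space $E$. A natural candidate is $E = W_0^{s,2}(\Omega)\cap \mathcal{C}^{0,\alpha}(\overline{\Omega})$ for some $\alpha\in(s,1)$, equipped with the sum of its two norms; the reason for requiring $\alpha>s$ is that this is precisely the Hölder threshold that guarantees that the nonlocal ``gradient square'' $\mathbb{D}_s^2(v)$ is well defined pointwise and enjoys good integrability (splitting the defining integral into the regions $|x-y|\le 1$ and $|x-y|\ge 1$, the first part is controlled by $[v]_{\mathcal{C}^{0,\alpha}}^2\int_0^1 r^{2\alpha-2s-1}dr$ and the second by $\|v\|_\infty^2$, since $v\equiv 0$ outside $\Omega$).

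The core of the argument is the two-sided estimate for $T$. Using the Calderón–Zygmund-type regularity results for the fractional Poisson equation that the paper announces (which give, for data in $L^m$ with $m>N/2s$, solutions in $W_0^{s,2}(\Omega)\cap \mathcal{C}^{0,\alpha}(\overline{\Omega})$ with explicit norm control), one obtains
\begin{equation*}
\|T(v)\|_E \le C\big(\|\mu\mathbb{D}_s^2(v)\|_{L^m(\Omega)} + \lambda\|f\|_{L^m(\Omega)}\big) \le C_1\|\mu\|_\infty \|v\|_E^2 + C_2\lambda\|f\|_{L^m(\Omega)},
\end{equation*}
where the quadratic dependence on $\|v\|_E$ comes from the pointwise Hölder-type bounds on $\mathbb{D}_s^2(v)$ sketched above, combined with the boundedness of $v$ ensured by the embedding into $\mathcal{C}^{0,\alpha}$. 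Choosing the radius $R=R(\lambda)$ of the ball so that $C_1\|\mu\|_\infty R^2 + C_2\lambda\|f\|_{L^m}\le R$ is possible precisely when $\lambda$ is small enough (the discriminant condition $4C_1C_2\lambda\|\mu\|_\infty \|f\|_{L^m}\le 1$ dictates the threshold $\lambda^\ast$). This produces the invariant ball, and taking the smaller root provides an a priori bound that is quadratic in $R$.

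Next, one verifies the hypotheses of Schauder. Continuity of $T$ on the invariant ball follows by writing $T(v_1)-T(v_2)$ as the solution of the linear problem with datum $\mu(\mathbb{D}_s^2(v_1)-\mathbb{D}_s^2(v_2))$ and estimating the latter by a product of a first-order nonlocal difference and the sum $v_1+v_2$, which yields $\|T(v_1)-T(v_2)\|_E \lesssim R\|v_1-v_2\|_E$; inside the ball $R$ is small, so $T$ is even a contraction (and in particular continuous). Compactness of $T(B_R)$ follows from the compact embedding $\mathcal{C}^{0,\beta}\hookrightarrow \mathcal{C}^{0,\alpha}$ (with $\beta>\alpha$ chosen as the actual Hölder exponent delivered by the Calderón–Zygmund result) together with the compact embedding $W_0^{s+\epsilon,2}\hookrightarrow W_0^{s,2}$, provided the linear theory gives slightly more regularity than one asks for in $E$. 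A Schauder fixed point $u=T(u)$ is then a weak solution in the sense of Definition \ref{weak Sol Plambda}.

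The main obstacle I expect is the first step: establishing that $\mathbb{D}_s^2(v)\in L^m(\Omega)$ with the correct quantitative control by $\|v\|_E^2$ when $v$ is merely the zero extension of a $\mathcal{C}^{0,\alpha}(\overline{\Omega})$ function. Near $\partial\Omega$ the Hölder exponent of the extension across the boundary is only as good as allowed by the vanishing trace, so one has to distinguish boundary from interior contributions and use the $\mathcal{C}^{0,s}$-type boundary behavior known for solutions of the Dirichlet fractional Poisson equation. Handling this carefully is what makes the announced ``sharp Calderón–Zygmund type regularity results'' essential: the regularity theorem must deliver the exact Hölder exponent (strictly larger than $s$) needed to close the estimate. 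Once this technical point is in place, the Schauder scheme above runs, and taking $\lambda^\ast$ as the explicit smallness threshold above yields the theorem.
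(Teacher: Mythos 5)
Your scheme is the right one in outline (reduce to a fixed-point problem for the linear solution map, find a small invariant convex set via the Calder\'on--Zygmund theory plus a quadratic inequality, then apply Schauder), but the space you propose cannot work, and this is not merely the technical detail you flag at the end: it is a hard obstruction. You want to iterate in $W_0^{s,2}(\Omega)\cap\mathcal{C}^{0,\alpha}(\overline\Omega)$ with $\alpha>s$, the threshold needed to make $\mathbb{D}_s^2(v)$ finite pointwise via $[v]_{\mathcal{C}^{0,\alpha}}^2\int_0^1 r^{2\alpha-2s-1}\,dr$. However, the optimal global H\"older regularity for the Dirichlet fractional Poisson problem on a smooth bounded domain is $\mathcal{C}^s(\overline\Omega)$, and this exponent is sharp regardless of the smoothness of the right-hand side (the solution behaves like $\operatorname{dist}(x,\partial\Omega)^s$ near the boundary, as in the model example $(1-|x|^2)_+^s$ which solves $(-\Delta)^s u = c$ in $B_1$). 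So no regularity theorem, however sharp, can deliver $\mathcal{C}^{0,\alpha}(\overline\Omega)$ with $\alpha>s$ for the zero extension of the solution, and the pointwise bound on $\mathbb{D}_s^2(v)$ that underlies your two-sided estimate for $T$ never becomes available. Your invariance inequality $\|T(v)\|_E\le C_1\|\mu\|_\infty\|v\|_E^2+C_2\lambda\|f\|_{L^m}$ therefore has no proof in the space $E$ you propose.

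What the paper does instead, and where the idea genuinely differs, is to replace H\"older control by an \emph{integrated} fractional Sobolev control. The iteration is run in $W_0^{s,1}(\Omega)$ on the closed convex set $E=\{v:\|v\|_{W_0^{s+\epsilon,r}(\Omega)}\le l^{1/2}\}$ with $r>N/s$ but close to it and $\epsilon>0$ small, chosen so that $r<\frac{mN}{(s+\epsilon)(N-m(2s-1))}$, exactly the constraint under which Proposition~\ref{corollary regularity}, item~3, gives $\|T(\varphi)\|_{W_0^{s+\epsilon,r}}\le C_1\|\mu\mathbb{D}_s^2(\varphi)+\lambda f\|_{L^m}$. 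The analogue of your pointwise H\"older bound is Lemma~\ref{holder nonlocal gradient}, which estimates $\|(\mathbb{D}_s^2(\varphi))^{1/2}\|_{L^r(\Omega)}$ by $\|\varphi\|_{W_0^{s+\epsilon,r}(\Omega)}$ after H\"older in the inner integral and an absolutely convergent radial factor. This avoids the boundary obstruction precisely because the ``extra differentiability'' $\epsilon$ is paired with a finite $p$, so the would-be H\"older exponent $s+\epsilon-N/r$ stays strictly below $s$, consistent with $\mathcal{C}^s(\overline\Omega)$ optimality. Lemma~\ref{Lemma g} then produces the radius $l$ exactly as your discriminant computation suggests, and continuity/compactness of $T$ are proved via Propositions~\ref{convergence} and~\ref{compactness} in $W_0^{s,1}$ rather than via contraction in a H\"older ball. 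The final membership $u\in W_0^{s,2}(\Omega)\cap\mathcal{C}^{0,\alpha}(\Omega)$ is recovered a posteriori from interpolation and Morrey-type embedding, with $\alpha$ small, and only with \emph{interior} H\"older regularity, not up to the boundary.
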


\begin{remark} $ $
\begin{itemize}
\item[a)]The definition of $W_0^{s,2}(\Omega)$ will be introduced in Section \ref{2}.
\item[b)] In 1983, L. Boccardo, F. Murat and J.P. Puel \cite{B_M_P_1983} already pointed out that the existence of solution to \eqref{local} is not guaranteed for every $\lambda f \in \Linfty$. Some extra conditions are needed. Hence, the smallness condition appearing in Theorem \ref{main th existence} was somehow expected.
\item[c)] For $\lambda f \equiv 0$, $u \equiv 0$ is a solution to \eqref{Plambda} that obviously belongs to $W_0^{s,2}(\Omega) \cap \mathcal{C}^{0,\alpha}(\Omega)$. Hence, there is no loss of generality to assume that $\lambda > 0$.
\end{itemize}
\end{remark}

The counterpart of $|\nabla u|^2$ in \eqref{local} is played in \eqref{Plambda} by $\mathbb{D}_s^2(u)$. This term appears in several applications. For instance, let us mention \cite{M_S_2015, C_D_2018, Schik_2015} where it naturally appears as the equivalent of $|\gradu|^2$ when considering fractional harmonic maps into the sphere.

\medbreak
Let us now give some ideas of the proof of Theorem \ref{main th existence}. As in the local case, see for instance \cite{P_2014}, the existence of solutions to \eqref{Plambda} is related to the regularity of the solutions to a linear equation of the form
\begin{equation} \label{linearEq introduction}
\left\{
\begin{aligned}
\sLap v & = h(x), \quad &  \textup{ in } \Omega,\\
v & = 0, & \textup{ in } \RN \setminus \Omega.
\end{aligned}
\right.
\end{equation}
In Section \ref{3}, we obtain sharp Calder\'on-Zygmund type regularity results for the fractional Poisson equation \eqref{linearEq introduction} with low integrability data. We believe these results are of independent interest and will be useful in other settings. Actually, Section \ref{3} can be read as an independent part of the present work.  In particular, we refer the interested reader to Propositions \ref{regularity}, \ref{corollary regularity-2} and \ref{corollary regularity-3}.
\medbreak
Having at hand suitable regularity results for \eqref{linearEq introduction} and inspired by \cite[Section 6]{P_2014}, we develop a fixed point argument to obtain a solution to \eqref{Plambda}. Note that, due to the nonlocality of the operator and of the ``gradient term'', the approach of \cite{P_2014} has to be adapted significantly. In particular, the form of the set where we apply the fixed point argument seems to be new in the literature. We consider a subset of $W_0^{s,1}(\Omega)$ where, in some sense, we require more ``differentiability'' and more integrability. This extra ``differentiability'' is a purely nonlocal phenomena and it is related with our regularity results for \eqref{linearEq introduction}. See Section \ref{4} for more details.
\medbreak
Let us also stress that the restriction $s \in (1/2,1)$ comes from the regularity results of Section \ref{3}. If suitable regularity results for \eqref{linearEq introduction} with $s \in (0,1/2]$ were available, our fixed point argument would provide the desired existence results to \eqref{Plambda}. See Sections \ref{3} and \ref{7} for more details.

\medbreak
Next, let us prove that the smallness condition imposed in Theorem \ref{main th existence} is somehow necessary.

\begin{theorem} \label{main th non-existence}
Assume \eqref{A1} and suppose that $\mu(x) \geq \mu_1 > 0$ and $f^{+} \not \equiv 0$. Then there exists $\lambda^{\ast \ast} > 0$ such that, for all $\lambda > \lambda^{\ast \ast}$, \eqref{Plambda} has no weak solutions in $W_0^{s,2}(\Omega)$.
\end{theorem}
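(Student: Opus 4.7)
The plan is to argue by contradiction: assume that $u \in W_0^{s,2}(\Omega)$ is a weak solution to \eqref{Plambda} and derive an upper bound on $\lambda$ depending only on the data. The central tool is the first Dirichlet eigenpair $(\lambda_1, \varphi_1)$ of $\sLap$ in $\Omega$, with $\varphi_1 > 0$. Since $\varphi_1 \in \mathcal{C}^s(\overline{\Omega})$ (extended by zero to $\RN$) and $\sLap \varphi_1 = \lambda_1 \varphi_1 \in \Linfty$, the function $\varphi_1$ belongs to $\mathbb{X}_s$ and is admissible in Definition \ref{weak Sol Plambda}. Testing the equation against $\varphi_1$ and using $\mu(x) \geq \mu_1 > 0$ together with $\mathbb{D}_s^2(u) \geq 0$, I obtain
\begin{equation*}
\mu_1 \int_\Omega \mathbb{D}_s^2(u)\varphi_1\, dx \;\leq\; \lambda_1 \int_\Omega u\varphi_1\, dx - \lambda \int_\Omega f\varphi_1\, dx.
\end{equation*}

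The next step is a weighted Cauchy--Schwarz estimate that controls $\lambda_1 \int u \varphi_1$ by $\int \mathbb{D}_s^2(u) \varphi_1$. Writing the energy bilinear form
\[ \lambda_1 \int_\Omega u\varphi_1\, dx \;=\; \frac{a_{N,s}}{2}\int_{\RN}\int_{\RN} \frac{(u(x)-u(y))(\varphi_1(x)-\varphi_1(y))}{|x-y|^{N+2s}}\, dxdy, \]
splitting the numerator as $(u(x)-u(y))\sqrt{\varphi_1(x)+\varphi_1(y)}$ times $(\varphi_1(x)-\varphi_1(y))/\sqrt{\varphi_1(x)+\varphi_1(y)}$, applying Cauchy--Schwarz, and using the symmetrization
\[ \int_{\RN}\int_{\RN} \frac{(u(x)-u(y))^2(\varphi_1(x)+\varphi_1(y))}{|x-y|^{N+2s}}\, dxdy \;=\; \frac{4}{a_{N,s}} \int_\Omega \mathbb{D}_s^2(u)\varphi_1\, dx, \]
I arrive at
\[ \lambda_1^2 \Big( \int_\Omega u\varphi_1\, dx \Big)^2 \;\leq\; a_{N,s}\, C_\ast \int_\Omega \mathbb{D}_s^2(u)\varphi_1\, dx, \]
with $C_\ast := \int_{\RN}\int_{\RN} \frac{(\varphi_1(x)-\varphi_1(y))^2}{(\varphi_1(x)+\varphi_1(y))\, |x-y|^{N+2s}}\, dxdy$. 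The elementary inequality $\frac{(a-b)^2}{a+b} \leq 2(\sqrt{a}-\sqrt{b})^2$ (for $a,b\ge 0$) bounds $C_\ast$ by $2\|\sqrt{\varphi_1}\|^2_{W^{s,2}(\RN)}$, and the sharp boundary behaviour $\varphi_1 \asymp \dist(\cdot,\partial\Omega)^s$ combined with $s > 1/2$ makes this norm finite.

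Combining the two displayed estimates and letting $X := \int_\Omega u\varphi_1\, dx$ yields the second--order inequality
\[ \mu_1 \lambda_1^2\, X^2 \;-\; a_{N,s} C_\ast \lambda_1\, X \;+\; a_{N,s} C_\ast \lambda \int_\Omega f\varphi_1\, dx \;\leq\; 0. \]
Since the leading coefficient is positive, the existence of a real $X$ satisfying this inequality forces the discriminant to be non-negative, i.e. $\lambda \int_\Omega f\varphi_1\, dx \leq \frac{a_{N,s} C_\ast}{4\mu_1}$. When $\int_\Omega f\varphi_1\, dx > 0$, I then set
\[ \lambda^{\ast\ast} := \frac{a_{N,s}\, C_\ast}{4\mu_1 \int_\Omega f\varphi_1\, dx}, \]
and any $\lambda > \lambda^{\ast\ast}$ contradicts the previous inequality.

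The main obstacle is that the assumption $f^+ \not\equiv 0$ is a priori weaker than $\int_\Omega f\varphi_1 > 0$. To handle the remaining case I will localize: pick a ball $B \subset\subset \Omega$ on which $f^+$ carries positive mass and consider $\varphi_{1,B}$, the first Dirichlet eigenfunction of $\sLap$ on $B$ extended by zero to $\RN$. Then $\varphi_{1,B} \in \mathbb{X}_s$ and $\sLap \varphi_{1,B}(x) = -a_{N,s} \int_B \varphi_{1,B}(y)|x-y|^{-(N+2s)}\, dy \leq 0$ on $\Omega \setminus B$. After combining with the one-sided sign information on $u$ obtained by comparison with the linear problem $\sLap w = \lambda f^+$ in $\Omega$, $w = 0$ in $\RN \setminus \Omega$, the exterior contribution in the weak identity carries the favourable sign. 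Repeating Steps~1--3 with $\varphi_{1,B}$ in place of $\varphi_1$ produces the analogous quadratic constraint with the strictly positive quantity $\int_B f \varphi_{1,B}\, dx$ in the constant term, and the same contradiction is reached. Ensuring $C_\ast<\infty$ via the boundary asymptotics of $\varphi_1$ and keeping track of this exterior contribution cleanly are the two technical points that will require the most care.
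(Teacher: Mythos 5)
Your core algebraic step — the weighted Cauchy--Schwarz/Young estimate that compares $\int u\, (-\Delta)^s\psi$ with $\int\mathbb{D}_s^2(u)\,\psi$ and a constant $C_\ast(\psi)$ — is essentially the engine of the paper's proof, and the discriminant calculation you carry out is just another bookkeeping for the Young-inequality absorption, so in the favourable case $\int_\Omega f\varphi_1\,dx>0$ your argument is correct and equivalent to the paper's. But the way you have set it up creates a gap, and the fix you sketch for that gap does not work.

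The gap is in the remaining case. You insist on taking the test function to be the first Dirichlet eigenfunction $\varphi_1$ (respectively $\varphi_{1,B}$), apparently because you want to exploit the identity $\int u\,(-\Delta)^s\varphi_1=\lambda_1\int u\varphi_1$. This choice is the source of the trouble: it forces you to deal separately with the case $\int_\Omega f\varphi_1\,dx\le 0$, and the patch you propose there is unsound. When you test against $\varphi_{1,B}$ for a ball $B\subset\subset\Omega$, the term $\int_{\Omega\setminus B}u\,(-\Delta)^s\varphi_{1,B}$ has an unknown sign unless $u\ge 0$ on $\Omega\setminus B$. Your claim that a comparison with the linear problem $(-\Delta)^sw=\lambda f^+$ gives a one-sided sign on $u$ is false: from the equation one only gets $(-\Delta)^su=\mu\,\mathbb{D}_s^2(u)+\lambda f\ge\lambda f$, so $u$ dominates the solution of $(-\Delta)^sz=\lambda f$, and that solution can be negative when $f$ changes sign. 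Comparing against $f^+$ instead of $f$ goes in the wrong direction: $(-\Delta)^s(w-u)=\lambda f^{-}-\mu\,\mathbb{D}_s^2(u)$ is of indeterminate sign. So no sign information on $u$ on $\Omega\setminus B$ is available, and the exterior contribution cannot be discarded.

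The cleaner observation — and the one the paper uses — is that the eigenvalue equation is not needed at all. Your weighted Cauchy--Schwarz estimate
\[
\Bigl|\,\iint\frac{(u(x)-u(y))(\psi(x)-\psi(y))}{|x-y|^{N+2s}}\Bigr|
\le\Bigl(\iint\frac{(u(x)-u(y))^2(\psi(x)+\psi(y))}{|x-y|^{N+2s}}\Bigr)^{1/2}
\Bigl(\iint\frac{(\psi(x)-\psi(y))^2}{(\psi(x)+\psi(y))|x-y|^{N+2s}}\Bigr)^{1/2}
\]
works for \emph{any} non-negative admissible test function $\psi$, after which Young's inequality and the lower bound $\mu\ge\mu_1$ yield
\[
\lambda\int_\Omega f\,\psi\,dx\;\le\;\frac{C_\ast(\psi)}{4\mu_1},\qquad
C_\ast(\psi):=\iint\frac{(\psi(x)-\psi(y))^2}{(\psi(x)+\psi(y))\,|x-y|^{N+2s}}\,dx\,dy.
\]
Taking $\psi=\phi^2$ with $\phi\in\mathcal{C}_0^\infty(\Omega)$ gives $C_\ast(\phi^2)\le 2[\phi]_{W^{s,2}}^2$, which is trivially finite (no boundary asymptotics of an eigenfunction are needed), and now $\phi$ is free: since $f^+\not\equiv 0$ one can pick $\phi$ concentrated on a density point of $\{f>\delta\}$ to make $\int_\Omega f\phi^2\,dx>0$. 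This single choice covers all cases, eliminates both the dichotomy and the localization/comparison argument, and is essentially what the paper does. So: keep your Cauchy--Schwarz/Young computation, but drop the eigenfunction and take $\psi=\phi^2$ with $\phi$ a bump function.
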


\medbreak
\begin{remark} $ $
\begin{itemize}
\item[a)]Observe that, if $v$ is a solution to
\[
\left\{
\begin{aligned}
(-\Delta)^s v & = -\mu(x)\,\mathbb{D}_s^2(v) - \lambda f(x)\,, & \quad \textup{ in } \Omega,\\
v & = 0\,, & \quad \textup{ in } \RN \setminus \Omega,
\end{aligned}
\right.
\]
then $u = -v$ is a solution to \eqref{Plambda}. Hence, if $\mu(x) \leq -\mu_1 < 0$ and $f^{-} \not \equiv 0$ we recover the same kind of non-existence result and the smallness condition is also required. \smallbreak
\item[b)] Since we do not use the regularity results of Section \ref{3}, the restriction $s \in (1/2,1)$ is not necessary in the proof of Theorem \ref{main th non-existence}. The result holds for all $s \in (0,1)$.
\end{itemize}
\end{remark}

Also, in order to show that the regularity imposed on the data $f$ is almost optimal, we provide a counterexample to our existence result when the regularity condition on $f$ is not satisfied. The proof makes use of the Hardy potential.

\begin{theorem}\label{optimality}
Let $\Omega \subset \RN,\ N \geq 2,$  be a bounded domain with $\partial \Omega$  of class $\mathcal{C}^{2}$, let $s \in (0,1)$ and let $\mu \in \Linfty$ such that $\mu(x) \geq \mu_1 > 0$. Then, for all $1 \leq p < \frac{N}{2s}$, there exists $f \in L^p(\Omega)$ such that \eqref{Plambda} has no weak solutions in $W_0^{s,2}(\Omega)$ for any $\lambda > 0$.
\end{theorem}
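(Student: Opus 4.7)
Plan. We adapt a classical Hardy-type counterexample to the nonlocal setting. Translating, assume $0 \in \Omega$ and fix $r > 0$ small enough that $B_r(0) \Subset \Omega$. Given $p \in [1,N/(2s))$, we have $N/p > 2s$, so we can pick $\alpha \in (2s, N/p)$, to be taken as close to the upper endpoint as needed below. Set
\[
f(x) := \frac{\chi_{B_r(0)}(x)}{|x|^{\alpha}}.
\]
Then $f \geq 0$ is not identically zero, and $\|f\|_{L^p(\Omega)}^p \sim \int_0^r \rho^{N-1-\alpha p}\, d\rho < +\infty$ since $\alpha p < N$, so $f \in L^p(\Omega)$.

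Assume by contradiction that there exist $\lambda > 0$ and a weak solution $u \in W_0^{s,2}(\Omega)$ of \eqref{Plambda}. Because $\mu\,\mathbb{D}_s^2(u) \geq 0$ and $\lambda f \geq 0$, the fractional maximum principle yields $u \geq 0$ in $\Omega$. Dropping the non-negative nonlinear source, $u$ is a weak super-solution of the linear fractional Poisson problem $(-\Delta)^s v = \lambda f$ in $\Omega$, $v \equiv 0$ in $\mathcal{C}\Omega$, whose unique solution is $\lambda w$, with $w$ solving the same problem for $\lambda = 1$. The weak comparison principle then gives $u \geq \lambda w$ in $\Omega$, and classical Green-function asymptotics for the fractional Laplacian give $w(x) \asymp |x|^{2s-\alpha}$ near the origin.

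The main analytic tool is the fractional Picone inequality: for $\phi \in W_0^{s,2}(\Omega)$ with $\phi \geq 0$ and $v > 0$ sufficiently regular,
\[
\int_{\Omega} \phi^{2}(x)\,\frac{(-\Delta)^{s} v(x)}{v(x)}\,dx \leq \frac{a_{N,s}}{2}\int_{\RN}\int_{\RN} \frac{(\phi(x) - \phi(y))^{2}}{|x-y|^{N+2s}}\,dx\,dy.
\]
Plugging $\phi = u$ and $v = w$ (so that $(-\Delta)^s v/v = f/w$), and using $u^2/w \geq \lambda^2 w$ which follows from $u \geq \lambda w$, we deduce
\[
\lambda^{2} \int_{\Omega} w\, f\, dx \leq \int_{\Omega} \frac{u^{2}\, f}{w}\, dx \leq \frac{a_{N,s}}{2}\int_{\RN}\int_{\RN}\frac{(u(x)-u(y))^{2}}{|x-y|^{N+2s}}\, dx\, dy < +\infty.
\]
Near the origin $w(x) f(x) \asymp |x|^{2s-2\alpha}$, so $\int_\Omega w f = +\infty$ as soon as $\alpha \geq (N+2s)/2$; this forces a contradiction and yields non-existence for any $\lambda > 0$.

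The main obstacle is the range $p \in [2N/(N+2s),\, N/(2s))$, for which $(N+2s)/2 \geq N/p$ and one cannot choose $\alpha \in [(N+2s)/2,\, N/p)$. In this regime the crude lower bound $u \geq \lambda w$ is insufficient and one has to bootstrap by exploiting the nonlinear source $\mu\,\mathbb{D}_s^2(u)$. Concretely, the contribution of $\mathcal{C}\Omega$ to the defining integral of $\mathbb{D}_s^2(u)$ yields the pointwise lower bound $\mathbb{D}_s^2(u)(x) \geq C_{\Omega'} u(x)^{2}$ on any $\Omega' \Subset \Omega$; plugged back into the equation, this forces $u$ to be strictly more singular than $\lambda w$ near the origin, and iterating produces successively stronger blow-up rates. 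A single application of Picone to an appropriate iterate (or to a well-chosen positive power of $u$) then closes the argument. Making this iteration quantitative, and verifying that the regularity required by Picone is preserved at each step, is the principal technical point.
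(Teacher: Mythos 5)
Your proposal takes a genuinely different route from the paper's, but it has an acknowledged gap that you do not close. The Picone/comparison argument you set up works only when you can choose $\alpha \geq (N+2s)/2$ and $\alpha < N/p$ simultaneously, i.e.\ for $p < 2N/(N+2s)$. For $p \in [2N/(N+2s),\, N/(2s))$ you sketch a bootstrap using the lower bound $\mathbb{D}_s^2(u)(x) \geq C_{\Omega'} u(x)^2$ on compact subsets, but you never carry out the iteration, never quantify the successive blow-up rates, and never verify that the Picone identity remains applicable to the iterates. Since you yourself flag this as ``the principal technical point,'' the argument as written does not prove the theorem in the full stated range. There are further unaddressed technicalities even in the easy range: the fractional Picone inequality needs $v>0$ with regularity compatible with a singular $w$, and $u^2/w$ must be admissible as a test pairing; the Green-function asymptotics $w(x)\asymp |x|^{2s-\alpha}$ and the comparison principle for weak solutions also require justification under these hypotheses.

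The paper's proof is shorter, avoids comparison and Green-function estimates entirely, and covers the full range in one stroke. It takes $f(x)=|x|^{-(N-\epsilon)/p}$ with $\epsilon$ small (so the exponent exceeds $2s$), and re-uses the test-function computation from Theorem~\ref{main th non-existence}: plugging $\phi^2$ (with $\phi\in\mathcal{C}_0^{\infty}(\Omega)$ arbitrary) into the weak formulation, exploiting the symmetry of $\mathbb{D}_s^2$, and applying Young's inequality yields the necessary condition
\[
\frac{1}{\mu_1}\iint_{D_\Omega}\frac{|\phi(x)-\phi(y)|^2}{|x-y|^{N+2s}}\,dy\,dx \;\geq\; \lambda \int_\Omega \frac{\phi^2(x)}{|x|^{(N-\epsilon)/p}}\,dx
\]
for \emph{every} such $\phi$. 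This would force the infimum of the corresponding Rayleigh quotient to be positive; but Proposition~\ref{cor_Hardy}~2) (optimality of the Hardy weight $|x|^{-2s}$, proved by a simple scaling argument) shows this infimum vanishes once the exponent strictly exceeds $2s$. No maximum or comparison principle, no Green function, and no bootstrap are needed, and the argument works for all $s\in(0,1)$. Morally, testing with all $\phi^2$ and completing the square via Young already extracts the strongest consequence of the equation, whereas your Picone pairing against the single function $w$ leaves information on the table that you then try to recover through iteration.
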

\medbreak
Using the same kind of approach than in Theorem \ref{main th existence}, i.e. regularity results for \eqref{linearEq introduction} and our fixed point argument, one can obtain existence results for related problems involving different nonlocal diffusion terms and different nonlinearities. 
\medbreak
First, we deal with the Dirichlet problem
\begin{equation}\label{map1} \tag{$\widetilde{P}_{\lambda}$}
\left\{
\begin{aligned}
(-\Delta)^s u & = \mu(x)\, u\, \mathbb{D}_s^2(u) + \lambda f(x)\,, & \quad \textup{ in } \Omega,\\
u & = 0\,, & \quad \textup{ in } \RN \setminus \Omega.
\end{aligned}
\right.
\end{equation}
For $\mu(x) \equiv 1$, this problem can be seen as a particular case of the fractional harmonic maps problem considered in \cite{C_D_2018, M_S_2015}.

\begin{remark}
The notion of weak solution to \eqref{map1} is essentially the same as in Definition \ref{weak Sol Plambda}. The only difference is that we now require that $u$ and $u\, \mathbb{D}_s^2(u)$ belong to $L^1(\Omega)$.
\end{remark}

We derive the following existence result for $\lambda f$ small enough.

\begin{theorem} \label{main th existence map1}
Assume that \eqref{A1} holds. Then, there exists $\lambda^{\ast} > 0$ such that, for all $0 < \lambda \leq \lambda^{\ast}$, \eqref{map1} has a weak solution $u \in W_0^{s,2}(\Omega) \cap \mathcal{C}^{0,\alpha}(\Omega)$ for some $\alpha > 0$.
\end{theorem}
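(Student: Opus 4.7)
The strategy is a fixed point argument that parallels the proof of Theorem \ref{main th existence}. The only structural change in \eqref{map1} with respect to \eqref{Plambda} is the extra multiplicative factor $u$ in front of the nonlocal ``gradient square'' $\mathbb{D}_s^2(u)$. Since the set on which the fixed point theorem will be applied imposes $\mathcal{C}^{0,\alpha}$ (hence $L^\infty$) control on $u$, this extra factor is harmless; in fact, by producing a cubic, rather than quadratic, nonlinear term in $u$, it makes the self-mapping step slightly easier than for \eqref{Plambda}.

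Concretely, the plan is to work with essentially the same subset $\mathcal{E} \subset W_0^{s,1}(\Omega)$ constructed for Theorem \ref{main th existence}, whose elements satisfy a fractional differentiability bound together with an $L^\infty$ bound, both inherited from the Calder\'on--Zygmund type results of Section \ref{3}. Given $w \in \mathcal{E}$, define $T(w) := u$ as the unique weak solution of the linear fractional Poisson problem
\[
\left\{
\begin{aligned}
(-\Delta)^s u & = \mu(x)\, w\, \mathbb{D}_s^2(w) + \lambda f(x), & \quad \textup{ in } \Omega,\\
u & = 0, & \quad \textup{ in } \RN \setminus \Omega.
\end{aligned}
\right.
\]
Using $w \in L^\infty(\Omega)$ together with the integrability bound on $\mathbb{D}_s^2(w)$ built into $\mathcal{E}$, H\"older's inequality gives $w\,\mathbb{D}_s^2(w) \in L^m(\Omega)$ for some $m > N/(2s)$. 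Combined with $\mu \in L^\infty(\Omega)$ and $\lambda f \in L^m(\Omega)$, this places the right-hand side in the range of data covered by Propositions \ref{regularity}, \ref{corollary regularity-2} and \ref{corollary regularity-3}, from which one extracts both $u \in W_0^{s,2}(\Omega) \cap \mathcal{C}^{0,\alpha}(\Omega)$ and the quantitative estimates required to conclude $u \in \mathcal{E}$.

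The remaining steps mimic those of \eqref{Plambda}. For the self-mapping, the regularity estimates yield an inequality of the form
\[
\|T(w)\|_{\mathcal{E}} \leq C_1 \|w\|_{L^\infty} \|\mathbb{D}_s^2(w)\|_{L^m} + C_2\, \lambda \|f\|_{L^m},
\]
and since the first term on the right is bounded by a cubic power of $\|w\|_{\mathcal{E}}$, the ball of radius $R$ in $\mathcal{E}$ is stable under $T$ for $R$ and $\lambda$ chosen small enough. Continuity follows from the linearity of the Poisson problem combined with the dependence estimates of Section \ref{3}, exactly as in the proof of Theorem \ref{main th existence}; compactness is obtained from the $\mathcal{C}^{0,\alpha}$ control on $T(w)$ via Arzel\`a--Ascoli and the compact embedding of fractional Sobolev spaces. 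Schauder's fixed point theorem then delivers a fixed point of $T$, which is by construction a weak solution of \eqref{map1} with the announced regularity.

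The main obstacle is a bookkeeping one: one must verify that, with the new product structure $u\,\mathbb{D}_s^2(u)$, the exponents built into $\mathcal{E}$ are still compatible with the demand $m > N/(2s)$ in the Calder\'on--Zygmund estimates of Section \ref{3}, so that $T$ indeed maps $\mathcal{E}$ into itself. Once this compatibility is established, the cubic nature of the nonlinear term renders the smallness condition on $\lambda$ more forgiving than in Theorem \ref{main th existence}, and no essentially new idea beyond the scheme of Section \ref{4} is required.
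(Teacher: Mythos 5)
Your proposal matches the paper's proof in all essentials: the same Schauder fixed-point scheme on a convex subset of $W_0^{s,1}(\Omega)$ defined by a $W_0^{s+\epsilon,r}$-seminorm bound with $r>N/s$, the same use of the Calder\'on--Zygmund estimates of Section~\ref{3} to close the self-mapping step, and the same observation that the product $u\,\mathbb{D}_s^2(u)$ yields a cubic nonlinear estimate (the paper's Lemma~\ref{lemma1map1}, handled there via the Sobolev embedding into $L^{r_s^*}$ rather than via the $L^\infty$ bound, though either works), with Lemma~\ref{Lemma g} applied with $p=3$ instead of $p=2$. One caveat: your closing claim that the cubic nonlinearity makes the smallness condition on $\lambda$ ``more forgiving'' than in Theorem~\ref{main th existence} is not justified --- the threshold $\lambda^\ast$ produced by Lemma~\ref{Lemma g} depends on the constants $a$, $b$ in a way that is not monotone in the exponent $p$, so no such comparison is warranted (this is harmless for the argument, since only the existence of some $\lambda^\ast>0$ is needed).
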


Next, motivated by some other results on fractional harmonic maps into the sphere \cite{DL_S_2014, DL_R_2011} and some classical results of harmonic analysis \cite[Chapter V]{Stein_book_1970}, we consider a different diffusion term. Depending on the real parameter $\lambda > 0$, we study the existence of solutions to the Dirichlet problem
\[ \tag{$Q_{\lambda}$} \label{Qlambda}
\left\{
\begin{aligned}
(-\Delta)^s u & = \mu(x)|(-\Delta)^{\frac{s}{2}} u|^q + \lambda f(x)\,, & \quad \textup{ in } \Omega,\\
u & = 0\,, & \quad \textup{ in } \RN \setminus \Omega,
\end{aligned}
\right.
\]
under the assumption
\[ \tag{$B_1$}  \label{B1}
\left\{
\begin{aligned}
& \Omega \subset \RN,\ N \geq 2, \textup{ is a bounded domain with }\partial \Omega \textup{ of class } \mathcal{C}^{2}, \\
& f \in L^m(\Omega) \textup{ for some } m \geq 1 \textup{ and } \mu \in \Linfty,\\
& s \in (1/2,1)\,  \textup{ and }\, 1 < q < \frac{N}{N-ms}.
\end{aligned}
\right.
\]

\begin{remark}
If $m \geq N/s$, we just need to assume $1 < q < \infty$ in \eqref{B1}.
\end{remark}

Since the diffusion term considered in \eqref{Qlambda} is different from the ones in \eqref{Plambda} and \eqref{map1}, we shall make precise the notion of weak solution to \eqref{Qlambda}.

\begin{definition} \label{weak Sol Qlambda}
We say that $u$ is a\textit{ weak solution} to \eqref{Qlambda} if $u \in L^1(\Omega)$, $|(-\Delta)^{\frac{s}{2}}u| \in L^q(\Omega)$, $u \equiv 0$ in $\mathcal{C}\Omega$ and
\begin{equation}
\int_{\Omega} u \sLap \phi\, dx = \int_{\Omega} \big( \mu(x) |(-\Delta)^{\frac{s}{2}} u|^q + \lambda f(x) \big) \phi\, dx, \quad \forall\  \phi \in \mathbb{X}_s,
\end{equation}
where $\mathbb{X}_s$ is defined in \eqref{Xs}.
\end{definition}

\begin{theorem} \label{main th Qlambda}
Assume that \eqref{B1} holds. Then there exists $\lambda^{\ast} > 0$ such that, for all $0 < \lambda \leq \lambda^{\ast}$,  \eqref{Qlambda} has a weak solution $u \in W_0^{s,1}(\Omega)$.
\end{theorem}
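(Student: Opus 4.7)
The plan is to adapt the fixed point strategy sketched for Theorem \ref{main th existence}, combined with the Calder\'on-Zygmund-type regularity results of Section \ref{3}. Given a function $w$ in a suitable subset of $W_0^{s,1}(\Omega)$, consider the linearized Dirichlet problem
\[
(-\Delta)^s v = \mu(x)\,|(-\Delta)^{s/2} w|^{q} + \lambda f \quad \text{in } \Omega, \qquad v = 0 \quad \text{in } \RN \setminus \Omega,
\]
and look for a fixed point of the associated map $T : w \mapsto v$. The natural domain for $T$ is
\[
E_R := \Big\{ w \in W_0^{s,1}(\Omega) : \|(-\Delta)^{s/2} w\|_{L^{qm}(\Omega)} \leq R \Big\},
\]
for a radius $R > 0$ to be chosen later.

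First, I would verify that $T$ is well-defined on $E_R$. For $w \in E_R$, the right-hand side $h_w := \mu(x)\,|(-\Delta)^{s/2} w|^{q} + \lambda f$ belongs to $L^m(\Omega)$ with $\|h_w\|_{L^m(\Omega)} \leq \|\mu\|_{\infty} R^{q} + \lambda \|f\|_{L^m(\Omega)}$. Invoking the regularity results of Section \ref{3}, the strict inequality $q < N/(N-ms)$ is exactly what is needed (via a fractional Sobolev embedding of $W^{s,m}(\Omega)$) to deduce $(-\Delta)^{s/2} v \in L^{qm}(\Omega)$ together with
\[
\|(-\Delta)^{s/2} v\|_{L^{qm}(\Omega)} \leq C \|h_w\|_{L^m(\Omega)} \leq C \|\mu\|_{\infty} R^{q} + C\lambda \|f\|_{L^m(\Omega)}.
\]
Since $q > 1$, fixing $R > 0$ small enough so that $C\|\mu\|_{\infty} R^{q-1} \leq 1/2$ and setting $\lambda^{\ast} := R/(2 C \|f\|_{L^m(\Omega)})$ ensures $T(E_R) \subset E_R$ for every $0 < \lambda \leq \lambda^{\ast}$; this fixes the threshold announced in the statement.

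Finally, I would apply Schauder's fixed point theorem. Because the constraint $q < N/(N-ms)$ is strict, the regularity results of Section \ref{3} actually place $v$ in a space strictly finer than the one defining $E_R$; compact fractional Sobolev embeddings then provide compactness of $T$ in the topology that $E_R$ inherits from $L^{qm}$ acting on $(-\Delta)^{s/2}$. Continuity in the same topology follows from the linearity of the regularity estimate combined with dominated convergence applied to the nonlinear map $w \mapsto |(-\Delta)^{s/2} w|^{q}$. The main obstacle I expect is precisely this compactness-and-continuity step: the nonlocal operator $(-\Delta)^{s/2}$ does not interact with cut-off and localization arguments as cleanly as the classical gradient, so passage to the limit must be carried out directly at the level of the fractional objects, with careful control of the nonlocal tail contributions. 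Once this is established, Schauder's theorem produces a fixed point $u \in E_R$ which, by construction, is a weak solution to \eqref{Qlambda} in the sense of Definition \ref{weak Sol Qlambda}.
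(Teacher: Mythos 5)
Your overall strategy — freeze the nonlinearity, solve the linearized fractional Poisson problem, choose a ball that is invariant under the resulting map for small $\lambda$, and close with Schauder — is exactly the paper's strategy. But the choice of constraint set $E_R$ with exponent $qm$ introduces a genuine gap.

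The paper's set is
\[
E_2 = \Big\{ v \in W_0^{s,1}(\Omega) : \|(-\Delta)^{s/2} v\|_{L^{r}(\Omega)} \le l^{1/q} \Big\}
\]
where $r$ is chosen \emph{strictly} between $qm$ and $\frac{mN}{N-ms}$. This wiggle room exists precisely because $q < \frac{N}{N-ms}$ is a strict inequality, and it is used in an essential way. If $w \in E_2$, then $|(-\Delta)^{s/2}w|^{q}$ is bounded in $L^{r/q}$ with $r/q > m \ge 1$; combined with a.e.\ convergence, this is what makes Vitali's theorem apply in the continuity step, giving $|(-\Delta)^{s/2}\varphi_n|^q \to |(-\Delta)^{s/2}\varphi|^q$ in $L^1(\Omega)$ when $\varphi_n \to \varphi$ in $W_0^{s,1}(\Omega)$. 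With your choice $E_R = \{\|(-\Delta)^{s/2}w\|_{L^{qm}} \le R\}$, the nonlinear term $|(-\Delta)^{s/2}w_n|^q$ is only bounded in $L^m$; when $m=1$ (which \eqref{B1} allows) this gives no uniform integrability at all, and your appeal to ``dominated convergence'' has no dominating function to offer. So the continuity of $T$ would fail in the endpoint case, and even for $m>1$ you would still need to supply the uniform-integrability argument that you currently gloss over.

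Two smaller points. First, the topology for Schauder is the norm topology of the ambient space $W_0^{s,1}(\Omega)$, not ``the topology $E_R$ inherits from $L^{qm}$ acting on $(-\Delta)^{s/2}$''; $E_R$ (resp.\ $E_2$) is a closed convex subset of $W_0^{s,1}(\Omega)$, and compactness and continuity of $T$ are with respect to the $W_0^{s,1}$-norm. Second, compactness does not come from a generic ``compact fractional Sobolev embedding'': the paper proves and invokes Proposition \ref{compactness}, which states that the solution operator $\mathcal{S}: L^1(\Omega) \to W_0^{s,p}(\Omega)$ ($1 \le p < N/(N-s)$) is compact; since the frozen right-hand sides are uniformly bounded in $L^m(\Omega) \hookrightarrow L^1(\Omega)$ on $E_2$, compactness of $T_2$ follows immediately. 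Finally, the regularity input you need is Proposition \ref{regu-grad1} (control on $(-\Delta)^{s/2}v$ in Lebesgue spaces), which gives $\|(-\Delta)^{s/2}v\|_{L^r(\Omega)} \le C_3\|h\|_{L^m(\Omega)}$ for $r \le \frac{mN}{N-ms}$; this is the bound that both feeds the invariance calculation (Lemma \ref{Lemma g} then produces the radius $l$ and the threshold $\lambda^*$, in place of your explicit $R$, $\lambda^*$ choice, but the two are equivalent).

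In short: replace $qm$ in the definition of $E_R$ by some $r$ with $qm < r < \frac{mN}{N-ms}$, prove continuity via a.e.\ convergence plus uniform integrability (Vitali), and prove compactness via the compactness of the solution operator $L^1 \to W_0^{s,1}$; your invariance and smallness calculations are then sound and Schauder applies.
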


\begin{remark}
The regularity results for \eqref{linearEq introduction} that we need to prove Theorem \ref{main th Qlambda} are different from the ones used in Theorems \ref{main th existence} and \ref{main th existence map1}. Nevertheless, the restriction $s \in (1/2,1)$ still arises out of these regularity results. See Proposition \ref{regu-grad1} for more details.
\end{remark}

\medbreak
Finally, for $s \in (0,1)$ and $\phi \in \mathcal{C}_0^{\infty}(\RN)$, following \cite{S_S_2015, Ponce_Book_2015}, we define the (distributional Riesz) \textit{fractional gradient of order $s$} as the vector field $\nabla^s: \RN \to \R$ given by
\begin{equation}\label{frac-g}
\n^s \phi(x):=\irn \frac{\phi(x)-\phi(y)}{|x-y|^s}\frac{x-y}{|x-y|}\frac{dy}{|x-y|^N}, \quad \forall\ x \in \RN.
\end{equation}
Then we deal with the Dirichlet problem
\begin{equation} \label{Rlambda} \tag{$\widetilde{Q}_{\lambda}$}
\left\{
\begin{aligned}
(-\Delta)^s u & = \mu(x)|\n^s u|^q + \lambda f(x)\,, & \quad \textup{ in } \Omega,\\
u & = 0\,, & \quad \textup{ in } \RN \setminus \Omega.
\end{aligned}
\right.
\end{equation}

\begin{remark}
The notion of weak solution to \eqref{Rlambda} has to be understood as in Definition \ref{weak Sol Qlambda}.
\end{remark}

\begin{theorem} \label{maint th Rlambda}
Assume that \eqref{B1} holds. Then there exists $\lambda^{\ast} > 0$ such that, for all $0 < \lambda \leq \lambda^{\ast}$, \eqref{Rlambda} has a weak solution $u \in W_0^{s,1}(\Omega)$.
\end{theorem}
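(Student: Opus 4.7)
The plan is to mirror the proof of Theorem \ref{main th Qlambda}, exploiting the tight link between the two nonlinearities $|(-\Delta)^{s/2} u|^q$ and $|\nabla^s u|^q$. A standard Fourier-side computation yields the pointwise identity (up to a multiplicative constant)
\begin{equation*}
\nabla^s u = \mathcal{R}\big((-\Delta)^{s/2} u\big),
\end{equation*}
where $\mathcal{R}$ denotes the vector-valued Riesz transform. Since $\mathcal{R}$ is a Calder\'on-Zygmund operator, hence $L^q(\RN)$-bounded for all $1 < q < \infty$, every $L^q$-regularity estimate for $(-\Delta)^{s/2} w$ produced in Section \ref{3} transfers immediately to $\nabla^s w$ over the same range of exponents. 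In particular, Proposition \ref{regu-grad1}, which drives Theorem \ref{main th Qlambda} and from which the restrictions $s \in (1/2,1)$ and $1 < q < N/(N-ms)$ of \eqref{B1} stem, applies verbatim to the fractional-gradient setting.

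With this at hand, one copies the fixed-point scheme used for Theorem \ref{main th Qlambda}. For a radius $R > 0$ to be chosen later, set
\begin{equation*}
K := \Big\{ v \in W_0^{s,1}(\Omega)\,:\, v \equiv 0 \textup{ in } \RN \setminus \Omega,\ \|\nabla^s v\|_{L^q(\RN)} \leq R \Big\},
\end{equation*}
and define $T(v) := w$ as the unique weak solution to
\begin{equation*}
\left\{
\begin{aligned}
(-\Delta)^s w & = \mu(x)|\nabla^s v|^q + \lambda f(x), \quad & \textup{ in } \Omega,\\
w & = 0, & \textup{ in } \RN \setminus \Omega.
\end{aligned}
\right.
\end{equation*}
Since $\mu \in \Linfty$, $|\nabla^s v|^q \in L^1(\Omega)$ and $f \in L^m(\Omega) \subset L^1(\Omega)$, the right-hand side lies in $L^1(\Omega)$ and $w$ is well defined. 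The transferred version of Proposition \ref{regu-grad1}, combined with a careful exponent bookkeeping, yields an estimate of the form
\begin{equation*}
\|\nabla^s w\|_{L^q(\RN)} \leq C_1\, \|\mu\|_{\Linfty}\, R^{q} + C_2\, \lambda\, \|f\|_{L^m(\Omega)},
\end{equation*}
so choosing first $R$ small and then $\lambda^{\ast}$ small enough gives $T(K) \subset K$; continuity of $T$ in the $W_0^{s,1}$-topology follows from the continuous dependence of the linear problem on $L^1$-data.

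The main obstacle will be the compactness step needed to apply Schauder's theorem in the non-reflexive space $W_0^{s,1}(\Omega)$. Specifically, one has to pass to the limit in the nonlinearity $|\nabla^s v_n|^q$ along sequences $v_n \to v$ in $K$, which requires upgrading the uniform $L^q$-bound on $\nabla^s v_n$ to slightly higher integrability (again via the regularity results of Section \ref{3}), extracting an almost everywhere convergent subsequence of the fractional gradients, and closing the argument by a Vitali-type convergence theorem, exactly as in the proof of Theorem \ref{main th Qlambda}. Once continuity and compactness of $T$ are established, Schauder's fixed-point theorem delivers the desired weak solution of \eqref{Rlambda}.
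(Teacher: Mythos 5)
You take a genuinely different route for the regularity transfer than the paper does. The paper's Corollary \ref{corollary regu-grad1}, on which its one-line proof of Theorem \ref{maint th Rlambda} rests, obtains the needed $L^p$ control of $|\nabla^s v|$ via the Riesz-potential bound $|\nabla^s v(x)| \le C\int_{\RN}|\nabla v(y)|\,|x-y|^{-(N+s-1)}\,dy$ from \cite[Lemma 15.9]{Ponce_Book_2015}, combined with Lemma \ref{AP gradient regularity} (which gives $\nabla v \in L^\sigma(\RN)$ globally) and Stein's Lemma \ref{Stein}. You instead invoke the Fourier-side identity $\nabla^s = \mathcal{R}\circ(-\Delta)^{s/2}$, with $\mathcal{R}$ the vector Riesz transform, and aim to recycle Proposition \ref{regu-grad1} directly.

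However, there is a genuine gap in the claim that this transfer is ``immediate'' and ``verbatim''. The Riesz transform is a global singular integral operator; its $L^q$-boundedness is a statement about $L^q(\RN)$. Proposition \ref{regu-grad1} controls only $\|(-\Delta)^{s/2}v\|_{L^q(\Omega)}$ --- the pointwise estimate \eqref{I4} driving its proof holds only for $x\in\Omega$, since the decomposition into $S_1,S_2$ is built relative to $\Omega$. Feeding a purely local bound through $\mathcal{R}$ does not control $\|\nabla^s v\|_{L^q(\RN)}$ (the quantity appearing in your set $K$), nor even $\|\nabla^s v\|_{L^q(\Omega)}$, because $\nabla^s v$ on $\Omega$ depends on $(-\Delta)^{s/2}v$ over all of $\RN$. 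You would need a separate tail estimate for $(-\Delta)^{s/2}v$ on $\RN\setminus\Omega$, which is not automatic near $\partial\Omega$. The paper's route sidesteps this precisely because its input ($\nabla v$ via Lemma \ref{AP gradient regularity}) and its output (the Riesz potential in Lemma \ref{Stein}) are both naturally global. A clean repair of your argument is to invoke Proposition \ref{regularity-bessel} together with the characterization \eqref{equiv}: this already yields $\|\nabla^s v\|_{L^p(\RN)} \le C\|h\|_{L^m(\Omega)}$, rendering the Riesz-transform detour unnecessary. With that in hand, your remaining fixed-point scaffolding (well-definedness, invariance, continuity via Vitali, compactness, Schauder) coincides with the paper's proof of Theorem \ref{main th Qlambda} with $(-\Delta)^{s/2}$ replaced by $\nabla^s$, which is exactly how the paper concludes.
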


\medbreak

We end this section describing the organization of the paper. In Section \ref{2}, we introduce the suitable functional setting to deal with our problems and we also recall some known results that will be useful. In Section \ref{3}, which is independent of the rest of the work, we prove Calder\'on-Zygmund type regularity results for the fractional Poisson equation \eqref{linearEq introduction}. Section \ref{4} is devoted to the proofs of Theorems \ref{main th existence} and \ref{main th existence map1}. Section \ref{5} contains the proofs of Theorems \ref{main th non-existence} and \ref{optimality}. Section \ref{6} deals with \eqref{Qlambda} and \eqref{Rlambda}, i.e., it is devoted to the proofs of Theorems \ref{main th Qlambda} and \ref{maint th Rlambda}. Finally, in Section \ref{7}, we present some remarks and open problems.

\bigbreak

\noindent \textbf{Acknowledgments.} Part  of  this  work  was done  while  the  first author was visiting the mathematics department of the University Bourgogne Franche-Comt\'e. He would like to thank the LMB for the warm hospitality and financial support. The second author thanks Prof. Tommaso Leonori for stimulating discussions concerning the subject of the present work.

\bigbreak

\noindent \textbf{Notation.}
\begin{enumerate}{\small
\item[1)] In  $\mathbb R^N$, we use the notations $|x|=\sqrt{x_1^2+\ldots+x_N^2}$ and $B_R(y)=\{x\in \mathbb R^N : |x-y|<R\}$.
\item[2)] For a bounded open set $\Omega \subset \RN$ we denote its complementary as $\mathcal{C}\Omega$, i.e. $\mathcal{C}\Omega = \RN \setminus \Omega$.
\item[3)]
For $p \in (1,\infty),$ we denote by $p^{\prime}$ the  conjugate exponent of $p$, namely $p^{\prime} = p/(p-1)$ and by $p_s^*$ the Sobolev critical exponent i.e.
$p_s^*=\frac{Np}{N-sp}$ if $sp<N$ and $p_s^*=+\infty$ in case $sp\geq N$.
}
\item[4)] For $u \in \Linfty$ we use the notation $\|u\|_{\infty} = \|u\|_{L^{\infty}(\Omega)} = \esssup_{x \in \Omega} |u(x)|$.
\end{enumerate}
\medbreak

\section{Functional setting and Useful tools} \label{2}

In this section we present the functional setting and some auxiliary results that will play an important role throughout the paper. We begin recalling the definition of the fractional Sobolev space.

\begin{definition}
Let $\Omega$ be an open set in $\RN$ and $s \in (0,1)$. For any $p \in [1,\infty)$, the fractional Sobolev space $W^{s,p}(\Omega)$ is defined as
\[ W^{s,p}(\Omega):= \left\{ u \in L^p(\Omega): \iint_{\Omega \times \Omega} \frac{|u(x)-u(y)|^p}{|x-y|^{N+sp}}dx dy < \infty \right\}. \]
It is a Banach space endowed with the usual norm
\[ \|u\|_{W^{s,p}(\Omega)} := \left( \|u\|_{L^p(\Omega)}^p + \iint_{\Omega \times \Omega} \frac{|u(x)-u(y)|^p}{|x-y|^{N+sp}} dx dy \right)^{\frac{1}{p}}.\]
\end{definition}

Having at hand this definition we introduce the suitable space to deal with our problems. 

\begin{definition}
Let $\Omega \subset \RN$ be a bounded domain with boundary $\partial \Omega$ of class $\mathcal{C}^{0,1}$ and $s \in (0,1)$. For any $p \in [1, \infty)$. We define the space $W_0^{s,p}(\Omega)$ as
\[ W_0^{s,p}(\Omega) := \left\{ u \in W^{s,p}(\RN): u = 0 \textup{ in } \RN \setminus \Omega \right\}.\]
It is a Banach space endowed with the norm
\[ \|u\|_{W^{s,p}_0(\Omega)} := \left( \iint_{D_{\Omega}} \frac{|u(x)-u(y)|^p}{|x-y|^{N+sp}} dx dy \right)^{1/p},\]
where
\[ D_{\Omega} := (\RN \times \RN) \setminus (\mathcal{C}\Omega \times \mathcal{C}\Omega) = (\Omega \times \RN) \cup (\mathcal{C}\Omega \times \Omega).\]
\end{definition}

The space $W_0^{s,p}(\Omega)$ was first introduced in \cite{S_V_2012} in the particular case $p = 2$. We refer to \cite{DN_P_V_2012} for more details on fractional Sobolev spaces. Nevertheless, due to their relevance in this work, we recall here some results involving fractional Sobolev spaces.
\medbreak
We shall make use of the following classical fractional Sobolev inequality. See \cite[Proposition 15.5]{Ponce_Book_2015} for a beautiful proof.

\begin{theorem}[Sobolev inequality] \label{Sobolev inequality}
For any $s \in (0,1)$, $p \in [1, \frac{N}{s})$ and $u \in W^{s,p}(\RN)$, it follows that
\[ \|u\|_{L^{p_s^{\ast}}}(\RN) \leq S_{N,p} \left( \iint_{\R^{2N}} \frac{|u(x)-u(y)|^p}{|x-y|^{N+sp}} dx dy \right)^{1/p},\]
where $S_{N,p} > 0$ is a constant depending only on $N$ and $p$.
\end{theorem}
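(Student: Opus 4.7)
The plan is to reduce the Gagliardo seminorm estimate to the classical Hardy-Littlewood-Sobolev inequality for the Riesz potential. By density of $\mathcal{C}_c^{\infty}(\RN)$ in $W^{s,p}(\RN)$ for $s \in (0,1)$ and $p \in [1,\infty)$, it suffices to prove the inequality for $u \in \mathcal{C}_c^{\infty}(\RN)$. For such $u$, the starting point is the representation $u = c_{N,s}\, I_s g$ with $g = (-\Delta)^{s/2} u$ and $I_s g(x) = \int_{\RN} |x-y|^{s-N} g(y)\, dy$, which can be checked on the Fourier side since $\widehat{I_s g}(\xi) = c\,|\xi|^{-s} \widehat{g}(\xi)$.

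For $p \in (1, N/s)$, I would first apply the Hardy-Littlewood-Sobolev inequality to obtain
\[ \|u\|_{L^{p_s^{\ast}}(\RN)} \leq C\, \|(-\Delta)^{s/2} u\|_{L^p(\RN)}. \]
Then I would invoke the Calder\'on-Zygmund equivalence
\[ \|(-\Delta)^{s/2} u\|_{L^p(\RN)}^p \asymp \iint_{\RN \times \RN} \frac{|u(x)-u(y)|^p}{|x-y|^{N+sp}}\, dx\, dy, \]
which for $p=2$ is a direct consequence of Plancherel's identity and, for general $p \in (1,\infty)$, follows from the $L^p$-boundedness of a standard singular integral operator (see for instance \cite{DN_P_V_2012}). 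Combining the two bounds yields the conclusion.

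The endpoint $p=1$ requires a separate argument since both Hardy-Littlewood-Sobolev and the singular integral equivalence above fail at this exponent. Here I would follow the layer cake strategy of \cite{Ponce_Book_2015}: first establish the weak-type inequality
\[ \big|\{|u|>t\}\big|^{1-s/N} \leq \frac{C}{t} \iint_{\RN \times \RN} \frac{|u(x)-u(y)|}{|x-y|^{N+s}}\, dx\, dy, \quad \forall\ t > 0, \]
by applying a fractional isoperimetric estimate to the super-level sets of $u$, and then upgrade it to the strong inequality by applying the weak-type bound to truncations at dyadic heights $t = 2^k$ and summing.

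The main obstacle is the Calder\'on-Zygmund equivalence between the Gagliardo seminorm and $\|(-\Delta)^{s/2} u\|_{L^p(\RN)}$ for $p \ne 2$; this is the only nontrivial piece of harmonic analysis involved, everything else being either a standard density argument or a classical convolution estimate. The endpoint $p=1$ is also delicate but avoids the singular integral machinery entirely at the cost of the more combinatorial layer cake argument described above.
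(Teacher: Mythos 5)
Your plan hinges on the claimed ``Calder\'on--Zygmund equivalence''
\[
\|(-\Delta)^{s/2}u\|_{L^p(\RN)}^p \asymp \iint_{\RN\times\RN}\frac{|u(x)-u(y)|^p}{|x-y|^{N+sp}}\,dx\,dy,
\]
and this is where the argument breaks. The right-hand side is the Besov seminorm $[u]_{\dot B^s_{p,p}}^p$, while the left-hand side is (equivalent to) the Triebel--Lizorkin seminorm $[u]_{\dot F^s_{p,2}}^p$; for $p\neq 2$ these are genuinely different spaces. The paper itself makes exactly this point in the discussion following Definition~\ref{Bessel Space}: ``in case $s\in(0,1)$, the two previous spaces does not coincide'', and Theorem~\ref{two-spaces} only gives an inclusion with a strict loss $\e>0$ of smoothness, not an equivalence. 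The reference to \cite{DN_P_V_2012} does not support your claim either: that survey records the identity only for $p=2$, via Plancherel.

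The one-sided bound that your proof actually needs, namely $\|(-\Delta)^{s/2}u\|_{L^p}\lesssim [u]_{W^{s,p}}$, corresponds to the embedding $\dot B^s_{p,p}\hookrightarrow\dot F^s_{p,2}$, and by the standard Besov--Triebel--Lizorkin relations $B^s_{p,\min(p,q)}\hookrightarrow F^s_{p,q}\hookrightarrow B^s_{p,\max(p,q)}$ this holds if and only if $p\le 2$. For $p>2$ the embedding runs in the opposite direction, so your chain HLS $\Rightarrow$ Bessel $\Rightarrow$ Gagliardo cannot be closed. Since $N\ge 2$ and $s<1$ force $N/s>2$, the range $p\in(2,N/s)$ is never empty, so this is a genuine gap, not a boundary case. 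You may have been thinking of the square-function characterization $\|(-\Delta)^{s/2}u\|_{L^p}\asymp\big\|\big(\int_{\RN}|u(\cdot)-u(y)|^2\,|\cdot-y|^{-N-2s}\,dy\big)^{1/2}\big\|_{L^p}$, which is a true Littlewood--Paley fact for $1<p<\infty$, but it involves an inner $L^2$ average and is not the Gagliardo $p$-seminorm. The paper avoids all of this by simply citing \cite[Proposition~15.5]{Ponce_Book_2015}, where the inequality is proved directly from the Gagliardo seminorm by an elementary averaging/maximal-function argument valid uniformly for $1\le p<N/s$, never passing through $(-\Delta)^{s/2}$ or HLS; to repair your proposal you would either need to adopt such a direct argument for $p>2$, or replace the detour through the Bessel space by a genuine Besov embedding $\dot B^s_{p,p}\hookrightarrow L^{p_s^*}$, which is the correct general statement but requires a different proof than HLS.
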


Next, we present a fractional Hardy inequality and some of its consequences. These results will be crucial to show the optimality of the regularity assumptions of Theorem \ref{main th existence}, namely to prove Theorem \ref{optimality}.

\begin{theorem} \label{hardy_Frank_Seiringer} \rm \cite[Theorem 1.1]{F_S_2008}  \it Let $N \geq 2$, $0 < s < 1$ and $p > 1$. Then, for all $u \in \mathcal{C}_0^{\infty}(\RN)$, it follows that
\begin{equation}\label{hardy}
\iint_{\R^{2N}}
\dfrac{|u(x)-u(y)|^p}{|x-y|^{N+ps}}dxdy\ge \L_{N,p,s}\int_{\RN}
\dfrac{|u(x)|^p}{|x|^{ps}}dx,
\end{equation}
where
\begin{equation} \label{LL}
\L_{N,s,p} := 2 \int_0^1 \sigma^{ps-1} \left| 1 - \sigma^{\frac{N-ps}{p}} \right| \Phi_{N,s,p}(\sigma) d\sigma > 0,
\end{equation}
and
\[ \Phi_{N,s,p}(\sigma) := |\mathbb{S}^{N-2}| \int_{-1}^{1} \frac{(1-t^2)^{\frac{N-3}{2}}}{(1-2\sigma t+\sigma^2)^{\frac{N+ps}{2}}} dt.\]
\end{theorem}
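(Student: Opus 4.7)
The plan is to follow the ground state representation method of Frank and Seiringer. The heuristic starting point is that the sharp constant in \eqref{hardy} is (formally) attained by the homogeneous function $\omega(x) := |x|^{-(N-ps)/p}$: this is the unique power for which both sides of \eqref{hardy} scale identically under dilation, and it formally solves the Euler--Lagrange equation associated with the inequality. The strategy is then to substitute $u = \omega v$ on the left-hand side and extract a ``Hardy'' contribution plus a non-negative remainder.

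In the Hilbertian case $p = 2$ this reduces to the exact algebraic identity
\[
|\omega(x)v(x) - \omega(y)v(y)|^2 = (\omega(x)-\omega(y))\bigl(\omega(x)v(x)^2 - \omega(y)v(y)^2\bigr) + \omega(x)\omega(y)\bigl(v(x)-v(y)\bigr)^2,
\]
so that dividing by $|x-y|^{N+2s}$ and integrating, the second term is manifestly non-negative and after symmetrization in $(x,y)$ the first yields $2\int v(x)^2\,\omega(x)\,\mathrm{p.v.}\!\int\frac{\omega(x)-\omega(y)}{|x-y|^{N+2s}}\,dy\,dx$. By the homogeneity of $\omega$ and the change of variables $y = \sigma x$ followed by passage to spherical coordinates, the inner principal-value integral collapses to a one-dimensional integral in $\sigma \in (0,\infty)$; multiplied by the prefactor $\omega(x)$ this converts into the density $|x|^{-N}$, and the relation $|u|^2/|x|^{2s} = v^2/|x|^N$ then lets me read off $\Lambda_{N,s,2}$ in the form \eqref{LL} after symmetrically splitting the $\sigma$-integral around $\sigma = 1$.

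For general $p > 1$ the exact identity fails, and the main obstacle is to replace it with a pointwise convex inequality that plays the same role: namely, an inequality of the form
\[
|\omega(x)v(x) - \omega(y)v(y)|^p \geq T(x,y) + R(x,y),
\]
where $T(x,y)$ is a ``linear-in-$\omega$'' term whose double integral against $|x-y|^{-(N+ps)}$ reproduces the Hardy expression after symmetrization and the same scaling computation as above, and $R(x,y) \geq 0$ is a non-negative remainder generalizing the term $\omega(x)\omega(y)|v(x)-v(y)|^p$. Establishing this pointwise estimate is the technical heart of the Frank--Seiringer argument: for $p \geq 2$ it can be derived from uniform convexity of $|\cdot|^p$, whereas for $1 < p < 2$ it requires a more delicate reduction to monotonicity of a one-variable auxiliary function. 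Once this step is in hand, the identification of the sharp constant and its representation \eqref{LL} reduce to routine manipulations with scaling and spherical coordinates.
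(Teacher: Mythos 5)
The paper itself offers no proof of this statement: it is quoted directly from Frank and Seiringer \cite{F_S_2008}, so there is no internal argument against which to compare your sketch. That said, your outline correctly describes the ground-state representation method used in that reference: the substitution $u=\omega v$ with the homogeneous weight $\omega(x)=|x|^{-(N-ps)/p}$, the exact algebraic identity in the Hilbertian case $p=2$, the collapse of the inner principal-value integral to a one-dimensional $\sigma$-integral via homogeneity and spherical coordinates (this is exactly where $\Phi_{N,s,p}$ arises), the folding of that integral onto $(0,1)$ using the symmetry $\Phi_{N,s,p}(1/\sigma)=\sigma^{N+ps}\Phi_{N,s,p}(\sigma)$, and, for general $p$, the replacement of the exact identity by a pointwise convexity inequality treated separately for $p\ge 2$ and $1<p<2$.

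As written, however, this is a plan rather than a proof. The pointwise inequality you call ``the technical heart'' --- Lemma 2.6 of \cite{F_S_2008} --- is precisely what makes the argument go through for $p\neq 2$, and you neither state it precisely nor establish it; without it, the claims that $T(x,y)$ reproduces the Hardy term and that $R(x,y)\ge 0$ are assertions, not derivations. The symmetry relation needed for the folding step, and the justification that the principal-value integral of $(\omega(x)-\omega(y))/|x-y|^{N+ps}$ is well defined and finite, are also left implicit. One further remark worth recording: carrying your $p=2$ computation to the end already produces on $(0,1)$ the density $\sigma^{ps-1}\,|1-\sigma^{(N-ps)/p}|^{p}\,\Phi_{N,s,p}(\sigma)$, i.e.\ with a $p$-th power on the factor $|1-\sigma^{(N-ps)/p}|$; that exponent is missing from the paper's display \eqref{LL}, and a complete derivation along your lines would catch this.
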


\begin{prop} \label{cor_Hardy} Let $\Omega \subset \RN$, $N \geq 2$, be a bounded domain with boundary $\partial \Omega$ of class $\mathcal{C}^{2}$ such that $0 \in \Omega$, $0 < s < 1$ and $p > 1$. Then:
\begin{itemize}
\item[1)] \rm \cite[Lemma 3.4]{A_B_2017} \it If we set
\[ \L(\O) := \inf \left\{ \dfrac{ \displaystyle \iint_{D_{\Omega}}\dfrac{|\phi(x)-\phi(y)|^p}{|x-y|^{N+ps}}dxdy}{\dyle\io\dfrac{|\phi(x)|^p}{|x|^{ps}}dx} : \phi \in \mathcal{C}_0^{\infty}(\Omega) \setminus \{0\} \right\},\]
it follows that $\L(\Omega) = \L_{N,s,p}$ where $\L_{N,s,p} > 0$ is defined in \eqref{LL}. $ $\\
\item[2)] The weight $|x|^{-ps}$ is optimal in the sense that, for all $\e>0$, if follows that
\[ \inf \left\{ \dfrac{ \displaystyle \iint_{D_{\Omega}}
\dfrac{|\phi(x)-\phi(y)|^p}{|x-y|^{N+ps}}dxdy}{\dyle\io\dfrac{|\phi(x)|^p}{|x|^{ps+\e}}dx } : \phi \in \mathcal{C}_0^{\infty}(\Omega) \setminus \{0\} \right\} = 0.\]
\end{itemize}
\end{prop}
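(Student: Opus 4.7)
The plan is to prove Part 2 by a standard scaling/concentration argument at the origin. The heuristic is that $|x|^{-ps}$ is exactly the critical weight dictated by the scaling of the Gagliardo seminorm, so multiplying by any extra negative power $|x|^{-\varepsilon}$ ruins that balance; rescaling a fixed bump toward the origin will then expose this mismatch and drive the Rayleigh quotient to zero.

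First, I would fix a reference function $\phi \in \mathcal{C}_0^{\infty}(\RN)$ with $\phi \not\equiv 0$ and $\supp \phi \subset B_2(0)\setminus B_1(0)$. The reason for choosing the support bounded away from the origin (rather than a bump centered at $0$) is that it makes
\[
A := \iint_{\RN\times\RN} \frac{|\phi(u)-\phi(v)|^p}{|u-v|^{N+ps}}\,du\,dv, \qquad
B := \int_{\RN} \frac{|\phi(u)|^p}{|u|^{ps+\varepsilon}}\,du,
\]
both finite and strictly positive, with no case distinction depending on whether $ps+\varepsilon < N$ or $ps+\varepsilon \geq N$. Neither quantity depends on the scaling parameter I will introduce next.

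Next, for $\delta > 0$ define the rescaled test functions $\phi_\delta(x) := \phi(x/\delta)$. Since $\supp \phi_\delta \subset B_{2\delta}(0)\setminus B_\delta(0)$ and $0 \in \Omega$ is interior, one has $\phi_\delta \in \mathcal{C}_0^{\infty}(\Omega)\setminus\{0\}$ for all sufficiently small $\delta > 0$. Because $\phi_\delta$ vanishes on $\mathcal{C}\Omega$, one can replace the domain $D_\Omega$ by $\RN\times\RN$ in the numerator. A direct change of variables $x = \delta u$, $y = \delta v$ then yields
\[
\iint_{D_\Omega}\frac{|\phi_\delta(x)-\phi_\delta(y)|^p}{|x-y|^{N+ps}}\,dx\,dy = \delta^{N-ps} A,
\qquad
\int_\Omega \frac{|\phi_\delta(x)|^p}{|x|^{ps+\varepsilon}}\,dx = \delta^{N-ps-\varepsilon} B.
\]

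Dividing gives the Rayleigh quotient $\delta^{\varepsilon}\,(A/B)$, which tends to $0$ as $\delta \to 0^{+}$. Since the infimum is nonnegative by construction, this proves it equals $0$. There is essentially no technical obstacle here: the argument is a textbook scaling computation, and the only point that requires a moment of care is the choice of a reference $\phi$ supported away from the origin, which uniformly handles both ranges of $ps+\varepsilon$ and guarantees $B \in (0,\infty)$ without having to play with the vanishing order of $\phi$ at the origin.
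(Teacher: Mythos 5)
Your proof is correct, and it takes a genuinely different route from the paper's. The paper argues by contradiction: assuming the infimum $\Lambda_\epsilon(\Omega)$ is positive, it restricts test functions to a small ball $B_r(0)\subset\Omega$, uses the pointwise comparison $|x|^{-(ps+\epsilon)} \geq r^{-\epsilon}|x|^{-ps}$ on that ball to obtain $\Lambda_\epsilon(\Omega) \leq r^\epsilon \Lambda(B_r(0))$, and then invokes Part 1 (which asserts $\Lambda(B_r(0)) = \Lambda_{N,s,p}$ independently of $r$) to conclude $\Lambda_\epsilon(\Omega)\le r^{\epsilon}\Lambda_{N,s,p}\to 0$ as $r\to 0$, a contradiction. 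Your argument is direct and constructive: you exhibit an explicit sequence of test functions $\phi_\delta(\cdot) = \phi(\cdot/\delta)$, supported on annuli shrinking to the origin, whose Rayleigh quotients scale like $\delta^{\varepsilon}A/B\to 0$. Your approach is self-contained — it does not use Part 1 at all — and makes the scaling mismatch behind the optimality of the weight completely transparent; the one subtle choice you make, taking $\supp\phi$ bounded away from the origin so that $B<\infty$ for every $\varepsilon$, is exactly the right one and you correctly flag why it matters. The paper's proof is shorter but leans on the nontrivial scale-invariance fact from Part 1, whereas yours buys independence from that ingredient at the cost of an explicit computation. Both are sound.
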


\begin{proof}
Since the proof of 1) can be found in \cite[Lemma 3.4]{A_B_2017}, we just provide the proof of 2). Let $\epsilon > 0$ be fixed but arbitrarily small. We assume by contradiction that there exists a smooth bounded domain $\Omega \subset \RN$ such that $0 \in \Omega$ and
\begin{equation} \label{contr1}
\Lambda_{\epsilon}(\Omega):= \inf \left\{ \dfrac{ \displaystyle \iint_{D_{\Omega}}
\dfrac{|\phi(x)-\phi(y)|^p}{|x-y|^{N+ps}}dxdy}{\dyle\io\dfrac{|\phi(x)|^p}{|x|^{ps+\e}}dx } : \phi \in \mathcal{C}_0^{\infty}(\Omega) \setminus \{0\} \right\} > 0.
\end{equation}
Let us then observe that for any $B_r(0) \subset \Omega$, it follows that
\begin{equation}\label{contr2}
0 < \L_{\epsilon}(\Omega) \leq \L_{\epsilon}(B_r(0)).
\end{equation}
Moreover, observe that for $\phi \in \mathcal{C}_0^{\infty}(B_r(0))$ we have that
\begin{equation} \label{contr3}
\int_{B_r(0)} \frac{|\phi(x)|^p}{|x|^{ps+\epsilon}} dx \geq \frac{1}{r^{\epsilon
}} \int_{B_r(0)} \frac{|\phi(x)|^p}{|x|^{ps}} dx.
\end{equation}
Hence, gathering \eqref{contr2}-\eqref{contr3}, it follows that, for all $\phi \in \mathcal{C}_0^{\infty}(B_r(0))$,
\begin{equation*}
0 < \L_{\epsilon}(\Omega) \leq \Lambda_{\epsilon}(B_r(0)) \leq \dfrac{ \displaystyle \iint_{D_{B_r(0)}}
\dfrac{|\phi(x)-\phi(y)|^p}{|x-y|^{N+ps}}dxdy}{\dyle\int_{B_r(0)}\dfrac{|\phi(x)|^p}{|x|^{ps+\e}}dx } \leq  r^{\epsilon} \dfrac{ \displaystyle \iint_{D_{B_r(0)}}
\dfrac{|\phi(x)-\phi(y)|^p}{|x-y|^{N+ps}}dxdy}{\dyle\int_{B_r(0)}\dfrac{|\phi(x)|^p}{|x|^{ps}}dx }.
\end{equation*}
Thus, by the definition of $\Lambda(B_r(0))$ and 1), we deduce that
$0 < \frac{\Lambda_{\epsilon}(\Omega)}{r^{\epsilon}} \leq \L_{B_r(0)} = \L_{N,s,p}$. Since (by assumption) $\L_{\epsilon}(\Omega) > 0$ and $\L_{N,s,p}$ is independent of $\Omega$, letting $r \to 0$, we obtain a contradiction and the result follows.
\end{proof}
 
In order to prove some of the Calder\'on-Zygmund type regularity results of Section \ref{3}, we will use the relation between the fractional Sobolev space $W^{s,p}(\ren)$ and the Bessel potential space defined below.
\begin{definition} \label{Bessel Space}
Let $s \in (0,1)$. For any $p \in [1,\infty)$, the Bessel potential space $L^{s,p}(\RN)$ is defined as
$$
L^{s,p}(\ren):= \left\{ u \in L^p(\RN) \mbox{  such that   } u = (I-\D)^{-\frac{s}{2}}f \mbox{  with } f\in L^p(\RN)\right\}.$$
It is a Banach space endowed with the norm
$$
\vertiii{u}_{L^{s,p}(\ren)}:= \|u\|_{L^p(\ren)}+ \|f\|_{L^p(\ren)}.
$$
\end{definition}

Having in mind the \textit{fractional gradient of order $s$} introduced in \eqref{frac-g}, let us point out that in \cite[Theorem 1.7]{S_S_2015} it is proved that
\begin{equation}\label{equiv}
L^{s,p}(\ren)=\left\{ u \in L^p(\RN) \mbox{  such that   } |\n^s u|\in L^p(\RN)\right\},
\end{equation}
with the equivalent norm
$$
\|u\|_{L^{s,p}(\ren)} := \|u\|_{L^p(\ren)}+ \|\n^s u\|_{L^p(\ren)}.
$$
\medbreak

Notice also that in the case where $s$ is an integer and $1<p<\infty$, by \cite[Theorem 7.63]{adams} we know that $L^{s,p}(\RN)=W^{s,p}(\RN)$.  Differently, in case $s\in (0,1)$, the two previous spaces does not coincide. However, we have the following result.
\begin{theorem}\label{two-spaces} \rm \cite[Theorem 7.63]{adams} \it Assume that $s\in (0,1)$ and $1<p<\infty$. For all $0 < \e < s$, it follows that
$$
L^{s+\e, p}(\RN)\subset W^{s,p}(\ren)\subset L^{s-\e,p}(\RN),
$$
with continuous inclusions.
\end{theorem}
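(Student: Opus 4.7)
The plan is to identify both spaces with members of the Triebel-Lizorkin scale and then invoke the standard strict embedding between different smoothness indices. I would proceed in three steps.

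\textbf{Step 1 (Bessel potentials as $F^s_{p,2}$).} By the definition of $L^{s,p}(\RN)$, one has $u \in L^{s,p}(\RN)$ iff $(I-\Delta)^{s/2}u \in L^p(\RN)$. Decomposing $u = \sum_j \Delta_j u$ via a Littlewood-Paley partition of unity and applying the Mikhlin multiplier theorem to the Bessel symbol $(1+|\xi|^2)^{s/2}$, together with the vector-valued Littlewood-Paley square function theorem (which requires $1<p<\infty$), one obtains
\[
\|u\|_{L^{s,p}(\RN)}\;\approx\;\Big\|\Big(\sum_{j\geq 0} 2^{2js}|\Delta_j u|^2\Big)^{1/2}\Big\|_{L^p(\RN)},
\]
which is by definition the $F^s_{p,2}(\RN)$ norm.

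\textbf{Step 2 ($W^{s,p}$ as $F^s_{p,p}$).} Rewriting the Gagliardo seminorm as
\[
[u]_{W^{s,p}(\RN)}^p \;=\; \int_{\RN}\|u(\cdot+h)-u(\cdot)\|_{L^p(\RN)}^p\,\frac{dh}{|h|^{N+sp}},
\]
and using polar coordinates together with the classical characterization of Besov spaces via first-order differences, one identifies $W^{s,p}(\RN)$ with $B^s_{p,p}(\RN) = F^s_{p,p}(\RN)$.

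\textbf{Step 3 (strict embedding in the $F$-scale).} It is classical that $F^{s_1}_{p,q_1}(\RN)\hookrightarrow F^{s_2}_{p,q_2}(\RN)$ whenever $s_1>s_2$, independently of $q_1,q_2\in[1,\infty]$. Applied with $s_1=s+\epsilon$, $s_2=s$, $q_1=2$, $q_2=p$ this yields $L^{s+\epsilon,p}\hookrightarrow W^{s,p}$; reversing the roles with $s_1=s$, $s_2=s-\epsilon$, $q_1=p$, $q_2=2$ yields $W^{s,p}\hookrightarrow L^{s-\epsilon,p}$.

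The genuinely delicate step is the equivalence claimed in Step 2, which requires translating the singular kernel $|h|^{-N-sp}$ into a discrete dyadic sum via careful Fourier-analytic arguments. A more elementary alternative bypasses Besov theory entirely and exploits interpolation: $L^{s,p}(\RN)$ can be realized as a complex interpolate between $L^p(\RN)$ and $W^{1,p}(\RN)$, while $W^{s,p}(\RN)$ (for $s\in(0,1)$) is the real interpolate with $p$-sum; the classical inclusion $(A_0,A_1)_{\theta,q}\hookrightarrow[A_0,A_1]_{\theta'}$ for any $\theta'<\theta$ and any $q\in[1,\infty]$, combined with the reiteration theorem, reproduces both desired embeddings directly.
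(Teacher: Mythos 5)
The paper does not prove this statement; it is a bare citation to Adams \cite{adams}, where the result is established within the interpolation framework for the couple $(L^p(\ren), W^{1,p}(\ren))$. Your argument is therefore not a reproduction of the paper's proof, but it is correct, and the two routes you describe are both standard ways to obtain the inclusions.

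Your main route (Steps 1--3) goes through the Triebel--Lizorkin scale: $L^{s,p}(\ren)=F^s_{p,2}(\ren)$ for $1<p<\infty$ via Mikhlin and the vector-valued square function theorem; $W^{s,p}(\ren)=B^s_{p,p}(\ren)=F^s_{p,p}(\ren)$ for $s\in(0,1)$ via the first-difference characterization of Besov norms; and the monotone embedding in the smoothness index, which absorbs any change of the microscopic index $q$ at the cost of an arbitrarily small loss in $s$. All three steps are correct, with the implicit but genuine requirement $1<p<\infty$ entering precisely at the Littlewood--Paley identification in Step 1. Your alternative sketch at the end is essentially the argument in Adams: $[L^p,W^{1,p}]_{\theta}=L^{\theta,p}(\ren)$, $(L^p,W^{1,p})_{\theta,p}=W^{\theta,p}(\ren)$, and the chain
\[
(A_0,A_1)_{\theta,q}\hookrightarrow (A_0,A_1)_{\theta',1}\hookrightarrow [A_0,A_1]_{\theta'}\hookrightarrow (A_0,A_1)_{\theta',\infty}\hookrightarrow (A_0,A_1)_{\theta'',q}
\]
for $\theta''<\theta'<\theta$ (valid because $W^{1,p}(\ren)\hookrightarrow L^p(\ren)$, so the real interpolation spaces are monotone in the first index) yields both inclusions after trading the mismatch between the $K$- and complex methods for the loss of $\e$. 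The Triebel--Lizorkin route is cleaner to quote from a single reference but imports more machinery; the interpolation route stays closer to Adams but needs the reiteration theorem and the ordering of the couple. Either is a legitimate proof of the theorem as stated.
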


Finally, we recall a classical result of harmonic analysis that will be useful in Section \ref{3}.

\begin{lemma}\label{Stein}\rm \cite[Theorem I, Section 1.2, Chapter V]{Stein_book_1970} \it
Let $0<\lambda<N$ and  $1\le p<\ell<\infty$ be such that  $\dfrac{1}{\ell}+1=\dfrac{1}{p}+\dfrac{\l}{N}$. For $g\in L^p(\RN)$, we define $$J_\lambda(g)(x)=\int_{\RN}
\dfrac{g(y)}{|x-y|^\l}dy.$$
Then, it follows that:
\begin{itemize}
\item[$a)$] $J_\lambda$ is well defined in the sense that the integral converges absolutely for  almost all $x\in \mathbb{R}^N$. \vspace{0.15cm}
\item[$b)$] If $p>1$, then $\|J_{\lambda}(g)\|_{L^\ell(\RN)} \leq c_{p,q} \|g\|_{L^p(\RN)}.$ 
    \item[$c)$] If $p=1$, then $\big|\{x\in \mathbb{R}^N\,| J_\lambda(g)(x)>\sigma\}\big|\le \left(\dfrac{
    A\|g\|_{L^1(\RN)}}{\sigma}\right)^\ell$.
\end{itemize}
\end{lemma}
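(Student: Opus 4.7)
The plan is to establish all three parts by the classical route through the Hardy--Littlewood maximal function, since this simultaneously handles the pointwise control needed in (a) and yields the strong/weak estimates of (b)--(c) via Marcinkiewicz interpolation. Throughout, let $Mg$ denote the centered Hardy--Littlewood maximal function of $g$.

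The first step is the pointwise splitting
\begin{equation*}
J_\lambda(g)(x) = \int_{|x-y|\le R} \frac{g(y)}{|x-y|^\lambda}\,dy + \int_{|x-y|>R} \frac{g(y)}{|x-y|^\lambda}\,dy =: I_1(x,R) + I_2(x,R),
\end{equation*}
for arbitrary $R>0$. For $I_1$, I would decompose the ball $B_R(x)$ into dyadic annuli $A_k = \{2^{-k-1}R < |x-y| \le 2^{-k}R\}$ and bound
\begin{equation*}
|I_1(x,R)| \le \sum_{k\ge 0}(2^{-k-1}R)^{-\lambda}\int_{|x-y|\le 2^{-k}R}|g(y)|\,dy \le C_{N,\lambda} R^{N-\lambda}\, Mg(x),
\end{equation*}
using $N-\lambda>0$ to sum the geometric series. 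For $I_2$, Hölder's inequality with exponent $p$ gives
\begin{equation*}
|I_2(x,R)| \le \|g\|_{L^p(\RN)}\Big(\int_{|z|>R}|z|^{-\lambda p'}\,dz\Big)^{1/p'} = C\,\|g\|_{L^p(\RN)}\,R^{N/p' - \lambda},
\end{equation*}
which is finite precisely because the relation $\tfrac{1}{\ell}+1 = \tfrac{1}{p}+\tfrac{\lambda}{N}$ forces $\lambda p' > N$ when $p>1$. Part (a) follows at once: for $p>1$ pick any $R>0$ (both pieces are finite a.e.), and for $p=1$ one verifies similarly that the integral converges absolutely for every $x$ with $Mg(x)<\infty$, i.e.\ almost everywhere.

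Next I would optimize in $R$. Balancing the two bounds, i.e.\ choosing $R$ so that $R^{N-\lambda}Mg(x) \sim R^{N/p'-\lambda}\|g\|_p$, which gives $R^{N/p} \sim \|g\|_p/Mg(x)$, one obtains the key pointwise estimate
\begin{equation*}
|J_\lambda(g)(x)| \le C_{N,\lambda,p}\,(Mg(x))^{p/\ell}\,\|g\|_{L^p(\RN)}^{1-p/\ell},
\end{equation*}
where the exponent $p/\ell$ is determined by the scaling relation. Raising to the $\ell$-th power and integrating, part (b) reduces to the strong $(p,p)$ boundedness of $M$, which holds for $p>1$; this gives $\|J_\lambda(g)\|_{L^\ell} \le C\|g\|_{L^p}$. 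For part (c) with $p=1$, the same pointwise bound reads $|J_\lambda(g)(x)|\le C(Mg(x))^{1/\ell}\|g\|_{L^1}^{1-1/\ell}$, so $\{|J_\lambda(g)|>\sigma\}\subset \{Mg > c\,\sigma^\ell/\|g\|_{L^1}^{\ell-1}\}$; applying the weak-type $(1,1)$ inequality for $M$ produces precisely the claimed bound $|\{|J_\lambda(g)|>\sigma\}|\le (A\|g\|_{L^1}/\sigma)^\ell$.

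The only delicate point is the interaction between the scaling relation $\tfrac{1}{\ell}+1=\tfrac{1}{p}+\tfrac{\lambda}{N}$ and the convergence of the tail integral in $I_2$; one must check that the hypothesis $1\le p<\ell<\infty$ indeed forces $\lambda < N/p'$ to fail and instead gives $\lambda p' > N$ (with the case $p=1$ handled by interpreting the bound on $I_2$ directly via $\|g\|_{L^1}\sup_{|z|>R}|z|^{-\lambda}$). Modulo this bookkeeping, the proof is essentially a one-parameter optimization followed by maximal function theory, and no further analytic input is required.
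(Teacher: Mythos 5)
Your proof is correct, but it takes a genuinely different route from the one in the cited reference. Stein's argument (Chapter V, \S1.2 of the 1970 book, which the paper invokes without reproving) splits the \emph{kernel} $|y|^{-\lambda}$ into $K_1 = |y|^{-\lambda}\chi_{\{|y|<\mu\}}$ and $K_\infty = |y|^{-\lambda}\chi_{\{|y|\ge\mu\}}$, observes that $K_1\in L^1$ and $K_\infty\in L^{p'}$ with norms scaling as powers of $\mu$, applies Young's inequality to $K_1*g$ and H\"older to $K_\infty*g$, chooses $\mu$ in terms of the level $\sigma$ so that the $K_\infty$ contribution is uniformly below $\sigma$, and thereby derives a weak-type $(p,\ell)$ estimate; the strong-type bound in part (b) is then obtained from two such weak-type estimates via the Marcinkiewicz interpolation theorem. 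Your proof is instead the Hedberg-type argument: you split the \emph{domain of integration}, dominate the near part by $R^{N-\lambda}Mg(x)$, the far part by $R^{-N/\ell}\|g\|_p$ (or $R^{-\lambda}\|g\|_1$ when $p=1$), optimize in $R$ pointwise to get $|J_\lambda g(x)|\le C\,(Mg(x))^{p/\ell}\|g\|_p^{1-p/\ell}$, and then read off both the strong estimate (b) and the weak estimate (c) directly from the strong $(p,p)$ and weak $(1,1)$ boundedness of $M$. The trade-off is that Stein's route is self-contained once one accepts Marcinkiewicz interpolation, whereas yours replaces interpolation entirely by a single pointwise inequality and the maximal theorem, which is shorter and also makes the absolute convergence in part (a) immediate (it holds wherever $Mg(x)<\infty$, i.e.\ a.e.). Both approaches correctly exploit the scaling relation $\tfrac{1}{\ell}+1=\tfrac{1}{p}+\tfrac{\lambda}{N}$, which in your computation appears as $N-\lambda>0$ for the inner sum and $\lambda p'>N$ (equivalently $N/p'-\lambda=-N/\ell<0$) for the outer tail.
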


\section{Regularity results for the fractional Poisson equation}\label{3}

The main goal of this section,  which is independent of the rest of the work, is to prove sharp Calder\'on-Zygmund type regularity results for the fractional Poisson equation 
\begin{equation} \label{linearEq}
\left\{
\begin{aligned}
\sLap v & = h(x), \quad &  \textup{ in } \Omega,\\
v & = 0, & \textup{ in } \RN \setminus \Omega,
\end{aligned}
\right.
\end{equation}
under the assumption 
\begin{equation} \label{hyplinearEq}
\left\{
\begin{aligned}
& \Omega \subset \RN,\ N \geq 2, \textup{ is a bounded domain with }\partial \Omega \textup{ of class } \mathcal{C}^{2}, \\
& s \in (1/2,1),\\
& h \in L^m(\Omega) \textup{ for some } m \geq 1.
\end{aligned}
\right.
\end{equation}
First of all, let us precise the notion of weak solution to \eqref{linearEq}.
\medbreak
\begin{definition}
We say that $v$ is a\textit{ weak solution} to \eqref{linearEq} if $v \in L^1(\Omega)$, $v \equiv 0$ in $\mathcal{C}\Omega := \RN \setminus \Omega$ and
\[ \int_{\Omega} v \sLap \phi\, dx = \int_{\Omega} h(x) \phi\, dx\,,\quad \forall\  \phi \in \mathbb{X}_s,\]
where $\mathbb{X}_s$ is defined in \eqref{Xs}.
\end{definition}
\medbreak

Under our assumption \eqref{hyplinearEq}, the existence and uniqueness of solutions to \eqref{linearEq} is a particular case of \cite[Proposition 2.4]{C_V_2014-JDE} (see also \cite[Section 4]{L_P_P_S_2015}). Having this in mind, we prove several regularity results for \eqref{linearEq}. Our first main result reads as follows:

\begin{prop} \label{corollary regularity} \label{regularity}
Assume \eqref{hyplinearEq} and let $v$ be the unique weak solution to \eqref{linearEq} and $t \in (0,1)$:
\begin{itemize}
\item[1)] If $m=1$, then $v \in W_0^{t,p}(\Omega)$ for all $1 \leq p < \frac{N}{N-(2s-t)}$ and there exists $C_1 = C_1(s,t,p,\Omega) > 0$ such that 
\[ \|v\|_{W_0^{t,p}(\Omega)}  \leq \|v\|_{W^{t,p}(\RN)}  \leq C_1 \|h\|_{L^1(\Omega)}.\]

\item[2)] If $1 < m < \frac{N}{2s}$, then $v \in W_0^{t,p}(\Omega)$ for all $1 \leq p \leq \frac{mN}{N-m(2s-t)}$ and there exists  $C_1 = C_1(m,s,t,p,\Omega) > 0$ such that
\[ \|v\|_{W_0^{t,p}(\Omega)}  \leq \|v\|_{W^{t,p}(\RN)}  \leq C_1 \|h\|_{L^m(\Omega)}.\]
\item[3)] If $\frac{N}{2s} \leq m < \frac{N}{2s-1}$, then $v \in W_0^{t,p}(\Omega)$ for all $1 \leq p < \frac{mN}{t(N-m(2s-1))}$  and there exists $C_1 = C_1(m,s,t,p,\Omega) > 0$ such that
\[ \|v\|_{W_0^{s,p}(\Omega)}  \leq \|v\|_{W^{t,p}(\RN)} \leq C_1 \|h\|_{L^m(\Omega)}.\]
\item[4)] If $m \geq \frac{N}{2s-1}$, then $v \in W_0^{t,p}(\Omega)$ for all $1 \leq p < \infty$.
\end{itemize}
\end{prop}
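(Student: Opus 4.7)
The plan is to represent the solution as a convolution against the Dirichlet Green function of $\sLap$ on $\Omega$, derive sharp pointwise estimates on the relevant kernels, apply the Hardy--Littlewood--Sobolev estimate (Lemma \ref{Stein}) to obtain $L^p$ bounds on $\nabla^{t}v$, and then translate these into fractional Sobolev regularity via the characterisation \eqref{equiv} of Bessel potential spaces combined with the inclusions of Theorem \ref{two-spaces}.

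First I would write $v(x)=\int_{\Omega}G_s(x,y)h(y)\,dy$, where $G_s$ is the Dirichlet Green function of $\sLap$ in $\Omega$, and use the classical pointwise bound $0\leq G_s(x,y)\leq C|x-y|^{-(N-2s)}$ together with the analogous estimate for its fractional derivatives, namely $|\nabla^{t}_x G_s(x,y)|\leq C|x-y|^{-(N-(2s-t))}$ for every $t\in(0,2s)$. Inserting this in the representation formula yields the pointwise inequality
\[
|\nabla^{t}v(x)|\,\leq\, C\int_{\Omega}\frac{|h(y)|}{|x-y|^{N-(2s-t)}}\,dy.
\]
The case $t=1$, needed in parts 3) and 4), is meaningful precisely because $s>1/2$ forces $N-(2s-1)<N$; this is the origin of the restriction on $s$ in \eqref{hyplinearEq}.

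For parts 1) and 2) (the regime $m<N/(2s)$) I would apply Lemma \ref{Stein} with $\lambda=N-(2s-t')$ for some $t'$ slightly larger than $t$. Part (b) of Lemma \ref{Stein} (and, in the case $m=1$, part (c) combined with a Kolmogorov interpolation) yields $\|\nabla^{t'}v\|_{L^{p'}(\RN)}\leq C\|h\|_{L^m(\Omega)}$ for every $p'$ up to the critical exponent $\frac{mN}{N-m(2s-t')}$. By the equivalence \eqref{equiv} this gives $v\in L^{t',p'}(\RN)$, and Theorem \ref{two-spaces} applied with $t'=t+\varepsilon$ provides $v\in W^{t,p}(\RN)$ for the desired range of $p$; since $v\equiv 0$ in $\mathcal{C}\Omega$, this is the same as $v\in W_0^{t,p}(\Omega)$.

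For part 3) (the intermediate range $N/(2s)\leq m<N/(2s-1)$) two ingredients are combined. On the one hand, the estimate of Step 1 with $t=0$ and the assumption $m>N/(2s)$ gives $v\in L^\infty(\Omega)$ with $\|v\|_\infty\leq C\|h\|_{L^m}$; on the other hand, the estimate with $t=1$ together with Lemma \ref{Stein}(b) yields $\nabla v\in L^q(\Omega)$ with $q=\frac{mN}{N-m(2s-1)}$. A Gagliardo--Nirenberg-type interpolation
\[
\|v\|_{W^{t,p}(\RN)}\,\leq\, C\,\|v\|_{W^{1,q}(\RN)}^{t}\,\|v\|_{L^\infty(\RN)}^{1-t},\qquad p=\frac{q}{t},
\]
whose exponents are dictated by the scaling $v(x)\mapsto v(\lambda x)$, then produces the advertised membership $v\in W_0^{t,p}(\Omega)$ for every $p<\frac{mN}{t(N-m(2s-1))}$. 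Part 4) follows immediately from Step 1 with $t=1$: when $m\geq N/(2s-1)$, H\"older's inequality applied to the kernel $|x-y|^{-(N-(2s-1))}$ (or Lemma \ref{Stein} in its limit form) yields $\nabla v\in L^q(\Omega)$ for every $q<\infty$, so $v\in W^{1,q}(\Omega)$, whence $v\in W_0^{t,p}(\Omega)$ for every $p<\infty$ by the standard embedding of integer into fractional Sobolev spaces.

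The main obstacle I foresee is Step 1, namely establishing the sharp pointwise estimate on $\nabla^{t}_x G_s(x,y)$ up to the boundary of $\Omega$; once this is in hand, the rest of the argument is an essentially mechanical case-by-case application of Lemma \ref{Stein}, the equivalence \eqref{equiv}, and the inclusions of Theorem \ref{two-spaces}.
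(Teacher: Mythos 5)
Your proposal takes a genuinely different route from the paper. You work directly with the Dirichlet Green function, claim a pointwise bound on its Riesz fractional gradient, run Hardy--Littlewood--Sobolev (Lemma \ref{Stein}), and then pass from the Bessel scale to $W^{t,p}$ via \eqref{equiv} and Theorem \ref{two-spaces}. The paper instead never touches the Green function: it quotes the $L^p$ estimates of Lemma \ref{LPPS weak solution regularity} and the $W^{1,p}$ estimates of Lemma \ref{AP gradient regularity} as black boxes and produces the fractional scale purely by the Brezis--Mironescu interpolation inequality (Lemma \ref{BM interpolation}) with $s_1=0$, $s_2=1$. Your case 3) Gagliardo--Nirenberg step (interpolating $L^\infty$ against $W^{1,q}$) is precisely what Lemma \ref{BM interpolation} gives with $p_1=\infty$; the paper's proof does the same thing with $p_1<\infty$ arbitrary and lets $p_1\to\infty$.

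There are two genuine gaps. First, the "main obstacle" you flag is not a cosmetic one: the pointwise estimate
$|\nabla^{t}_x G_s(x,y)|\leq C|x-y|^{-(N-(2s-t))}$
for the \emph{Dirichlet} Green function, uniformly up to $\partial\Omega$, is substantially harder than the bound for the free kernel $|x-y|^{-(N-2s)}$; the two-sided sharp estimates on $G_s^\Omega$ known in the literature contain boundary weights $\delta(x)^s\delta(y)^s$, and controlling a fractional derivative of $G_s^\Omega(\cdot,y)$ near the boundary is not a straightforward consequence of them. Obtaining this bound is essentially equivalent in difficulty to proving the $W^{1,p}$ estimate of Lemma \ref{AP gradient regularity} that the paper cites from \cite{A_P_2018}; your step~1 therefore hides the real work rather than replacing it. Second, the route through Theorem \ref{two-spaces} necessarily loses an $\varepsilon$: you need $t'=t+\varepsilon>t$ to infer $v\in W^{t,p}$ from $v\in L^{t',p}$, and since the HLS exponent $\frac{mN}{N-m(2s-t')}$ is \emph{decreasing} in $t'$, you can only reach $p<\frac{mN}{N-m(2s-t)}$ in part 2), strictly missing the claimed closed endpoint $p=\frac{mN}{N-m(2s-t)}$. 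The paper's interpolation between $L^{p_1}$ at the endpoint $p_1=\frac{mN}{N-2ms}$ and $W^{1,p_2}$ at the endpoint $p_2=\frac{mN}{N-m(2s-1)}$ does hit the closed endpoint, so the two arguments are not equivalent in strength.
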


\begin{remark} $ $
\begin{itemize}
\item[a)] The previous results are sharp in the sense that, if ``we take $t = s = 1$'', we recover the classical sharp regularity results for the local case and those cannot be improved. See for instance \cite[Chapter 5]{Ponce_Book_2015}.
\item[b)] In the particular case of the fractional Laplacian of order $s \in (1/2,1)$ and for $h \in L^1(\Omega)$, we improve the regularity results of \cite{A_A_B_2016, K_M_S_2015, L_P_P_S_2015}. Note however that in the three quoted papers the authors deal with more general operators and cover the full range $s \in (0,1)$. Furthermore, in \cite{K_M_S_2015} the authors also deal with measures as data.
\item[c)] Since $s \in (1/2,1)$, observe that $t < 2s$ for all $t \in (0,1)$.\end{itemize}
\end{remark}

As we believe it has its own interest, let us highlight a particular case of the previous result which follows directly from Proposition \ref{corollary regularity} considering $t = s$.

\begin{cor} 
Assume \eqref{hyplinearEq} and let $v$ be the unique weak solution to \eqref{linearEq}:
\begin{itemize}
\item[1)] If $m=1$, then $v \in W_0^{s,p}(\Omega)$ for all $1 \leq p < \frac{N}{N-s}$ and there exists $C_1 = C_1(s,p,\Omega) > 0$ such that
\[ \|v\|_{W_0^{s,p}(\Omega)} \leq \|v\|_{W^{s,p}(\RN)} \leq C_1 \|h\|_{L^1(\Omega)}.\]

\item[2)] If $1 < m < \frac{N}{2s}$, then $v \in W_0^{s,p}(\Omega)$ for all $1 \leq p \leq \frac{mN}{N-ms}$ and there exists $C_1 = C_1(m,s,p,\Omega) > 0$ such that
\[ \|v\|_{W_0^{s,p}(\Omega)} \leq \|v\|_{W^{s,p}(\RN)}  \leq C_1 \|h\|_{L^m(\Omega)}.\]
\item[3)] If $\frac{N}{2s} \leq m < \frac{N}{2s-1}$, then $v \in W_0^{s,p}(\Omega)$ for all $1 \leq p < \frac{mN}{s(N-m(2s-1))}$  and there exists $C_1 = C_1(m,s,p,\Omega) > 0$ such that
\[ \|v\|_{W_0^{s,p}(\Omega)}  \leq \|v\|_{W^{s,p}(\RN)} \leq C_1 \|h\|_{L^m(\Omega)}.\]
\item[4)] If $ m \geq \frac{N}{2s-1}$, then $v \in W_0^{s,p}(\Omega)$ for all $1 \leq p < \infty$.
\end{itemize}
\end{cor}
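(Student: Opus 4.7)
The statement is a direct corollary of Proposition \ref{regularity}: the plan is to simply specialize to the case $t = s$. This is permissible because $s \in (1/2,1) \subset (0,1)$, so $t = s$ falls within the allowed range $t \in (0,1)$ of Proposition \ref{regularity}. No new analytic ingredient is needed beyond what has already been used to establish that proposition.

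To carry this out, I would verify the four cases individually. Substituting $t = s$ into the case 1 threshold $p < \frac{N}{N-(2s-t)}$ yields the stated $p < \frac{N}{N-s}$; cases 2 and 3 are obtained by the same substitution in $p \leq \frac{mN}{N-m(2s-t)}$ and $p < \frac{mN}{t(N-m(2s-1))}$, respectively; and case 4 does not depend on $t$, so its conclusion is unchanged. The constants $C_1$ in each case depend only on $s, p, \Omega$ (and on $m$ in cases 2 and 3), absorbing the now-fixed dependence on $t$. Finally, the inequality $\|v\|_{W_0^{s,p}(\Omega)} \leq \|v\|_{W^{s,p}(\RN)}$ is an immediate consequence of the definitions: the Gagliardo double integral over $D_{\Omega}$ is pointwise dominated by the one over $\RN \times \RN$, because the integrand is non-negative and $D_{\Omega} \subset \RN \times \RN$.

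There is no genuine obstacle here. The corollary is stated separately only because the case $t = s$ is the most natural and most frequently used specialization in the rest of the paper, and because it makes the comparison with the local Calder\'on--Zygmund theory transparent. The one mild point worth noting is that Proposition \ref{regularity} requires $t$ in the open interval $(0,1)$; since our standing assumption places $s$ strictly inside $(1/2,1) \subset (0,1)$, the choice $t = s$ is always admissible and the resulting constants inherit the same dependencies as in Proposition \ref{regularity}.
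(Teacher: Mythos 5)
Your proof is correct and is exactly the paper's argument: the paper introduces this corollary with the words ``which follows directly from Proposition \ref{corollary regularity} considering $t = s$,'' and you correctly verify that the substitution $t = s$ reproduces each of the four stated exponent ranges. No further comment is needed.
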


In the following two results we complete the information obtained in Proposition \ref{regularity} when $h \in L^m(\Omega)$ for some $m > N/2s$.

\begin{prop} \label{corollary regularity-2}
Assume \eqref{hyplinearEq} and let $v$ be the unique weak solution to \eqref{linearEq} and $t \in (0,s)$:  
\begin{itemize}
\item[1)] If $\frac{N}{2s} \leq m < \frac{N}{2s-t}$ then $v \in W_0^{t,p}(\Omega)$ for all $1 \leq p < \frac{mN}{N-m(2s-t)}$ and there exists  $C_2 = C_2(m,s,t,p,\Omega) > 0$ such that
\[ \|v\|_{W_0^{t,p}(\Omega)}  \leq \|v\|_{W^{t,p}(\RN)}  \leq C_2 \|h\|_{L^m(\Omega)}.\]
\item[2)] If $m \geq \frac{N}{2s-t}$ then $v \in W_0^{t,p}(\Omega)$ for all $1 \leq p < \infty$ and there exists $C_2 = C_2(m,s,t,p,\Omega) > 0$ such that
\[ \|v\|_{W_0^{t,p}(\Omega)}  \leq \|v\|_{W^{t,p}(\RN)}  \leq C_2 \|h\|_{L^m(\Omega)}.\]
\end{itemize}
\end{prop}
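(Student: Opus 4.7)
The plan is to refine the Riesz potential / Green function method that underlies Proposition \ref{regularity} by estimating the fractional derivative of $v$ of order $t' \in (t, s)$ slightly larger than the target order $t$, and then upgrading from the Bessel potential scale to the fractional Sobolev scale via Theorem \ref{two-spaces}.

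Writing $v(x) = \int_{\Omega} G_s(x,y)\,h(y)\,dy$ in terms of the Dirichlet Green function for $\sLap$, the standard kernel estimates on $\mathcal{C}^{1,1}$ domains yield the pointwise bound
\[
|\nabla^{t'} v(x)| \;\lesssim\; J_{N-(2s-t')}(|h|\chi_{\Omega})(x), \qquad x \in \RN,
\]
whenever $t' \in (0, 2s)$, with $J_{\lambda}$ as in Lemma \ref{Stein}. Since $s > 1/2$ and $t < s$, the exponent $N - (2s-t')$ lies in $(0, N)$ for any admissible $t' \in (t,s)$, so Lemma \ref{Stein} applies.

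For part 1, fix $1 < p < mN/(N - m(2s-t))$. The quantity $mN/(N - m(2s-t'))$ is continuous in $t'$ and tends to $mN/(N - m(2s-t))$ as $t' \downarrow t$; hence one can choose $t' \in (t, s)$ close enough to $t$ so that $p < mN/(N - m(2s-t'))$ and $t'-t < t$. Lemma \ref{Stein}(b) then gives $\|\nabla^{t'} v\|_{L^p(\RN)} \lesssim \|h\|_{L^m(\Omega)}$. For part 2, given any $p < \infty$: since $m \geq N/(2s-t)$, pick $t' \in (t, s)$ with $m \geq N/(2s-t')$ and $t'-t < t$; then either $m > N/(2s-t')$ and $J_{N-(2s-t')}(|h|\chi_{\Omega})$ is essentially bounded by the Morrey-type endpoint of Riesz potentials, or $m = N/(2s-t')$ and $J_{N-(2s-t')}(|h|\chi_{\Omega}) \in \mathrm{BMO}(\RN)$, which restricted to the bounded set $\Omega$ lies in every $L^p(\Omega)$ by John--Nirenberg. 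In both cases we obtain $\|\nabla^{t'} v\|_{L^p(\RN)} \lesssim \|h\|_{L^m(\Omega)}$.

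Combining the estimate on $\nabla^{t'}v$ with the trivial control $\|v\|_{L^p(\Omega)} \lesssim \|v\|_{L^{\infty}(\Omega)} \lesssim \|h\|_{L^m(\Omega)}$, which holds because $m > N/(2s)$ already forces $v \in L^{\infty}(\Omega)$ (a classical fact for the fractional Poisson equation, also recoverable by bootstrapping Proposition \ref{regularity}), we deduce $v \in L^{t',p}(\RN)$ via the characterization \eqref{equiv}. Theorem \ref{two-spaces} applied with parameter $t$ and $\e = t' - t \in (0, t)$ then yields $v \in W^{t,p}(\RN)$, so $v \in W_0^{t,p}(\Omega)$ with the stated norm estimate; the endpoint $p = 1$ follows from any case $p > 1$ by H\"older's inequality on the bounded set $\Omega$. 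The main technical obstacle I anticipate is justifying the Green function derivative estimate uniformly up to $\partial\Omega$, where the boundary decay $\delta(y)^s$ of $G_s$ must not destroy the global $L^p(\RN)$ bound on the Riesz potential; this should go through via the sharp two-sided Green function bounds on $\mathcal{C}^{1,1}$ domains together with barrier/scaling arguments near the boundary.
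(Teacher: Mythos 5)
Your proposal follows a genuinely different route from the paper, and it contains a substantive gap. The paper does not touch the Green function kernel directly at all: it chains citable estimates. Concretely, Proposition~\ref{corollary regularity-2} is derived from Lemma~\ref{regularity-2}, which rests on Proposition~\ref{regularity-bessel}, which in turn uses (i) the pointwise bound $|\nabla^s v(x)| \lesssim \int_{\mathbb{R}^N} |\nabla v(y)|\,|x-y|^{-(N-(1-s))}\,dy$ from \cite[Lemma~15.9]{Ponce_Book_2015}, (ii) the gradient regularity $\|\nabla v\|_{L^p} \lesssim \|h\|_{L^m}$ of Lemma~\ref{AP gradient regularity} (\cite[Lemma~2.15]{A_P_2018}), and (iii) Lemma~\ref{Stein}. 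This places $v$ in the Bessel space $L^{s,p_1}(\mathbb{R}^N)$; Theorem~\ref{two-spaces} then gives $v\in W^{s',p_1}$ for all $s'<s$, and the fractional Sobolev embedding of Lemma~\ref{adams-interpolation} trades the drop from $s'$ down to $t$ for an increase in integrability, producing the exponent $\frac{mN}{N-m(2s-t)}$. Part~2 is then a soft limiting argument: $\Omega$ bounded gives $h\in L^{\bar m}(\Omega)$ for all $\bar m<m$, so one applies part~1 and lets $\bar m\uparrow N/(2s-t)$.

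The gap in your argument is the kernel bound you start from,
\[
|\nabla^{t'} v(x)| \lesssim \int_{\Omega}\frac{|h(y)|}{|x-y|^{N-(2s-t')}}\,dy ,
\]
which amounts to $|\nabla_x^{t'} G_s(x,y)| \lesssim |x-y|^{-(N-2s+t')}$ for the Dirichlet Green function, uniformly up to $\partial\Omega$. You flag this yourself as the ``main technical obstacle'' that ``should go through'', but nothing cited in the paper establishes it: $\nabla^{t'}$ is a nonlocal operator that integrates across $\partial\Omega$, where $G_s(\cdot,y)$ meets the zero exterior data through a $\delta(x)^s$-type cusp, and extracting a clean, boundary-uniform Riesz-kernel bound on $\nabla^{t'}G_s$ from the two-sided Green function estimates on $\mathcal{C}^{1,1}$ domains is not a routine barrier/scaling exercise. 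Since every conclusion in your proof flows from that unproved estimate, the proposal as written is not a complete proof. Two smaller remarks: at the endpoint $m=N/(2s)$ you do not actually have $v\in L^\infty(\Omega)$ (Lemma~\ref{LPPS weak solution regularity}, 3) gives only $v\in L^p$ for every finite $p$, which is in fact all you need); and for part~2 the BMO/John--Nirenberg endpoint argument is avoidable -- the paper's $\bar m\uparrow N/(2s-t)$ limiting trick is lighter and stays entirely within Lemma~\ref{Stein}.
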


\begin{prop} \label{corollary regularity-3}
Assume \eqref{hyplinearEq} and let $v$ be the unique weak solution to \eqref{linearEq} and $t \in (s,1)$. If $\frac{N}{2s} \leq m < \frac{N}{s}$ then $v \in W_0^{t,p}(\Omega)$ for all $1 \leq p < \frac{mN}{N-m(2s-t)}$ and there exists  $C_2 = C_2(m,s,t,p,\Omega) > 0$ such that
\[ \|v\|_{W_0^{t,p}(\Omega)}  \leq \|v\|_{W^{t,p}(\RN)}  \leq C_2 \|h\|_{L^m(\Omega)}.\]
\end{prop}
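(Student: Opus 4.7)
The plan is to adapt the strategy of Proposition \ref{corollary regularity-2} to the range $t\in(s,1)$, combining a Riesz/Bessel potential representation of $v$, Lemma \ref{Stein}, and Theorem \ref{two-spaces}. The restriction $s\in(1/2,1)$ together with $t\in(s,1)$ places $2s-t\in(0,s)\subset(0,N)$, so all the fractional operators appearing below stay within their natural range.

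Using that the Green function of the Dirichlet fractional Laplacian on $\Omega$ is pointwise controlled by a multiple of the Riesz kernel $|x-y|^{2s-N}$, I would first identify $v$ with a Riesz/Bessel potential of $h\chi_\Omega$ on $\RN$, so that formally $(-\Delta)^{\tau/2} v = c\,I_{2s-\tau}(h\chi_\Omega)$ for $0<\tau<2s$. Given $1\le p<\tfrac{mN}{N-m(2s-t)}$, I would then pick $\epsilon>0$ small enough so that $\tfrac{1}{p}=\tfrac{1}{m}-\tfrac{2s-t-\epsilon}{N}$. This is feasible precisely because the hypothesis $m<N/s$, combined with $2s-t<s$, forces $m(2s-t-\epsilon)<N$ for $\epsilon$ small.

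Applying Lemma \ref{Stein} to $I_{2s-t-\epsilon}$ then yields
\[
\|(-\Delta)^{(t+\epsilon)/2} v\|_{L^p(\RN)} \leq C\,\|h\|_{L^m(\Omega)},
\]
while applying the same lemma to $I_{2s}$, exploiting $m\geq N/(2s)$ and the boundedness of $\Omega$ (which allows $L^q$-control for every finite $q$), gives $\|v\|_{L^p(\RN)} \leq C'\,\|h\|_{L^m(\Omega)}$. Together, these bounds say $v\in L^{t+\epsilon,p}(\RN)$ with norm controlled by $\|h\|_{L^m(\Omega)}$. Theorem \ref{two-spaces} provides the continuous inclusion $L^{t+\epsilon,p}(\RN)\hookrightarrow W^{t,p}(\RN)$, from which $\|v\|_{W^{t,p}(\RN)}\leq C_2\,\|h\|_{L^m(\Omega)}$ follows; since $v\equiv 0$ on $\RN\setminus\Omega$, we obtain $v\in W_0^{t,p}(\Omega)$ with the same estimate.

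The main obstacle is the rigorous justification of the first step, namely identifying $v$ with a Riesz/Bessel potential of $h\chi_\Omega$ \emph{on all of $\RN$}, since the Dirichlet fractional Laplacian on $\Omega$ is not merely the restriction of $(-\Delta)^s$ to $\Omega$. This requires the pointwise Green-function bounds quoted above combined with an approximation argument by smoother data; it is the same type of reduction presumably used for Propositions \ref{regularity} and \ref{corollary regularity-2}, and it is where the assumption $s\in(1/2,1)$ plays a role. Once this identification is in place, the role of the quantitative hypothesis $m<N/s$ is transparent: it ensures that the Riesz potential $I_{2s-t-\epsilon}$ maps $L^m$ into a genuine $L^p$-space with finite $p$, which is exactly the range stated in the proposition.
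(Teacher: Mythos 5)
Your overall scaffolding (pick $\epsilon>0$, land $v$ in a Bessel potential space $L^{t+\epsilon,p}(\RN)$, then downgrade to $W^{t,p}(\RN)$ via Theorem~\ref{two-spaces}) is sensible, and your bookkeeping for the exponent range matches the claimed $p<\frac{mN}{N-m(2s-t)}$. But the crucial step is unjustified, and you in fact flag it yourself: the identity $(-\Delta)^{\tau/2}v = c\,I_{2s-\tau}(h\chi_\Omega)$ does \emph{not} hold for the Dirichlet solution. The Green function $G_\Omega$ of the Dirichlet fractional Laplacian is \emph{bounded} by the Riesz kernel $c|x-y|^{2s-N}$, but it is not equal to it, and $(-\Delta)^{\tau/2}$ is a nonlocal, sign-changing operator, so pointwise upper bounds on $G_\Omega(x,y)$ give you no pointwise control whatsoever on $(-\Delta)^{\tau/2}_x G_\Omega(x,y)$. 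In particular you cannot write $|(-\Delta)^{\tau/2}v|\lesssim I_{2s-\tau}(|h|\chi_\Omega)$. This is not a technicality to be patched by an approximation argument; it is the reason the paper never attempts a direct Green-function computation of a fractional power of $v$.

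The paper circumvents this differently. Its only ``hard'' Green-function input is Lemma~\ref{AP gradient regularity} (from \cite{A_P_2018}), giving $W^{1,p_2}(\RN)$-regularity for the \emph{ordinary} gradient $\nabla v$. From there it reaches Bessel regularity not through a Green-function identity but through the pointwise identity $\nabla^s v(x)=c_N\int_{\RN}\frac{\nabla v(y)}{|x-y|^{N+s-1}}\,dy$ (Lemma 15.9 in \cite{Ponce_Book_2015}), valid for \emph{any} $v$, together with Lemma~\ref{Stein}; that is the content of Proposition~\ref{regularity-bessel} and Lemma~\ref{regularity-2}. For the range $t\in(s,1)$ the paper then combines the $W^{s',p_1}$ bound of Lemma~\ref{regularity-2} ($s'<s$) with the $W^{1,p_2}$ bound of Lemma~\ref{AP gradient regularity} via the Gagliardo--Nirenberg-type interpolation of Lemma~\ref{BM interpolation}, choosing $s'$ close enough to $s$ to hit any $p<\frac{mN}{N-m(2s-t)}$. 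If you want to salvage your route, you need to replace the Green-function identification by the $W^{1,p_2}$ estimate for $\nabla v$ and then either use the convolution identity for $\nabla^s v$ (as in the paper), or estimate $(-\Delta)^{t/2}v$ directly from $\nabla v$ by splitting the singular integral into near and far parts, as is done in the proof of Proposition~\ref{regu-grad1}. As written, the proposal has a real gap at its first step.
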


\begin{remark}
Notice that, in the case where $t\in (s,1)$, Propositions \ref{corollary regularity} and \ref{corollary regularity-3} complete and somehow give a more precise information than the result obtained in \cite{K_M_S_2015-2}. 
\end{remark}

\begin{remark}
The proofs of Propositions \ref{regularity}, \ref{corollary regularity-2} and \ref{corollary regularity-3} are postponed to Subsection \ref{3.1}
\end{remark}

Due to the nonlocality of the fractional Laplacian, several notions of regularity can be studied. The following results, which generalize the fractional regularity proved in \cite[Theorem 24]{L_P_P_S_2015} with a different approach, can be seen as the counterpart of Proposition \ref{regularity} to deal with \eqref{Qlambda} and \eqref{Rlambda}.

\begin{prop} \label{regu-grad1}
Assume \eqref{hyplinearEq} and let $v$ be the unique weak solution to \eqref{linearEq} and $t \in (0,s]$:
\begin{itemize}
\item[1)] If $m = 1$, then $(-\Delta)^{\frac{t}{2}}v \in L^p(\Omega)$ for all $1 \leq p < \frac{N}{N-(2s-t)}$ and there exists $C_3 = C_3(s,t,p,\Omega) > 0$ such that
\[ \|(-\Delta)^{\frac{t}{2}}v\|_{L^p(\O)}\leq C_3 \|h\|_{L^1(\Omega)}.\]
\item[2)] If $1 < m < \frac{N}{2s-t}$, then $(-\Delta)^{\frac{t}{2}}v \in L^p(\Omega)$ for all $1 \leq p \leq \frac{mN}{N-m(2s-t)}$ and there exists $C_3 = C_3(s,t,m,p,\Omega) > 0$ such that
\[ \|(-\Delta)^{\frac{t}{2}}v\|_{L^p(\O)}\leq C_3 \|h\|_{L^m(\Omega)}.\]
\item[3)] If $m \geq \frac{N}{2s-t}$ then  $(-\Delta)^{\frac{t}{2}}v \in L^p(\Omega)$ for all $1 \leq p < \infty$ and there exists $C_3 = C_3(s,t,m,p,\Omega) > 0$ such that
\[ \|(-\Delta)^{\frac{t}{2}}v\|_{L^p(\O)}\leq C_3 \|h\|_{L^m(\Omega)}.\]
\end{itemize}
\end{prop}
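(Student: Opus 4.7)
The plan is to reduce the bound on $\|(-\Delta)^{t/2}v\|_{L^p(\Omega)}$ to an estimate on a Riesz-type potential of the data $h$, after which Lemma \ref{Stein} will yield the three sub-cases.

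First I would extend $h$ by zero outside $\Omega$ and consider the whole-space representation
\[ V(x) := c_{N,s}\int_{\RN}\frac{h(y)}{|x-y|^{N-2s}}\,dy,\]
which satisfies $(-\Delta)^s V = h$ on $\RN$. The semigroup identity $(-\Delta)^{t/2}(-\Delta)^{-s} = (-\Delta)^{-(s-t/2)}$ gives the Riesz potential representation
\[ (-\Delta)^{t/2}V(x) = c_{N,s,t}\int_{\RN}\frac{h(y)}{|x-y|^{N-(2s-t)}}\,dy,\]
which is meaningful since the hypotheses $t\in(0,s]$ and $s\in(1/2,1)$ imply $0<2s-t<N$. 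The true solution $v$ to \eqref{linearEq} differs from $V$ by $w:=V-v$, which is $s$-harmonic in $\Omega$ with exterior data $V\big|_{\mathcal{C}\Omega}$; since $w$ is $s$-harmonic in $\Omega$ it is regular there, and $(-\Delta)^{t/2}w$ can be controlled in every $L^p(\Omega)$ by an appropriate norm of $V\big|_{\mathcal{C}\Omega}$, which is in turn dominated by $\|h\|_{L^m(\Omega)}$. Alternatively, one could work directly with Green function estimates of the form $|(-\Delta)_x^{t/2}G_s(x,y)|\leq C|x-y|^{-(N-(2s-t))}$, obtaining the pointwise bound
\[ |(-\Delta)^{t/2}v(x)|\leq C\int_\Omega\frac{|h(y)|}{|x-y|^{N-(2s-t)}}\,dy.\]

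Having reduced the problem to a Riesz potential estimate, I would then apply Lemma \ref{Stein} with $\lambda:=N-(2s-t)\in(0,N)$. If $m=1$, part c) yields a weak-$L^{N/(N-(2s-t))}$ bound and, since $|\Omega|<\infty$, this gives $(-\Delta)^{t/2}v\in L^p(\Omega)$ for every $p<N/(N-(2s-t))$, settling case (1). If $1<m<N/(2s-t)$, part b) with $1/\ell = 1/m-(2s-t)/N$ yields exactly $\ell = mN/(N-m(2s-t))$, settling case (2). For $m\geq N/(2s-t)$ in case (3), Lemma \ref{Stein} does not apply at the borderline, but the embedding $L^m(\Omega)\hookrightarrow L^{m'}(\Omega)$ (valid because $\Omega$ is bounded) for any $1<m'<N/(2s-t)$ reduces the matter to case (2), and letting $m'$ tend to $N/(2s-t)$ makes the exponent $m'N/(N-m'(2s-t))$ arbitrarily large, so $(-\Delta)^{t/2}v\in L^p(\Omega)$ for every $p<\infty$.

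The main obstacle I anticipate is the rigorous justification of the reduction step, namely controlling the $s$-harmonic correction $w=V-v$ (or equivalently obtaining good pointwise estimates on the fractional derivatives of the Green function of $(-\Delta)^s$ on $\Omega$ with Dirichlet exterior condition). Once this reduction is secured, the three sub-cases follow transparently from Lemma \ref{Stein}, with the stated constants of the form $C_3 = C_3(s,t,m,p,\Omega)$.
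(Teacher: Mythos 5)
Your proposal takes a genuinely different route from the paper, and the difference matters. You decompose $v = V - w$ where $V$ is the whole-space Riesz potential of $h$ (extended by zero) and $w$ is an $s$-harmonic correction, then use the semigroup identity to reduce $(-\Delta)^{t/2}V$ to a Riesz potential of $h$ with kernel $|x-y|^{-(N-(2s-t))}$. The paper instead never passes through the Green or Riesz representation: it obtains the pointwise bound
\[ |(-\Delta)^{t/2}v(x)|\leq c\Big(|v(x)|+\|v\|_{L^1(\Omega)}+\int_{\RN}\frac{|\nabla v(w)|}{|w-x|^{N+t-1}}\,dw\Big) \]
by splitting the integral over $\RN$ into a far region (where $v(y)=0$), an intermediate region, and $B_1(x)$ (which is handled by the fundamental theorem of calculus, yielding the Riesz potential of $|\nabla v|$, not of $h$). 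It then feeds in the $W^{1,p}$ regularity from Lemma \ref{AP gradient regularity} and the $L^p$ regularity from Lemma \ref{LPPS weak solution regularity}, together with Lemma \ref{Stein}. This is exactly where the hypothesis $s>1/2$ is used: the Sobolev gradient regularity of Lemma \ref{AP gradient regularity} is only available in that range.

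The genuine gap in your argument is the reduction step that you yourself flag: the assertion that ``$(-\Delta)^{t/2}w$ can be controlled in every $L^p(\Omega)$ by an appropriate norm of $V|_{\mathcal{C}\Omega}$, which is in turn dominated by $\|h\|_{L^m(\Omega)}$'' is not a proof, and it is not obvious. Interior regularity of $s$-harmonic functions does not give $L^p(\Omega)$ bounds up to $\partial\Omega$: near the boundary, $v$ (hence $w$) behaves like $\operatorname{dist}(\cdot,\partial\Omega)^s$, $(-\Delta)^{t/2}$ of such a profile behaves like $\operatorname{dist}(\cdot,\partial\Omega)^{s-t}$, and for $t=s$ one is at the borderline where logarithmic singularities may appear. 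Moreover, for $m=1$ the exterior data $V|_{\mathcal{C}\Omega}$ only lies in weak-$L^{N/(N-2s)}$, so it is not clear which ``appropriate norm'' of $V|_{\mathcal{C}\Omega}$ you propose to control, nor how that control transfers to $(-\Delta)^{t/2}w$ through the nonlocal operator whose value at $x\in\Omega$ involves the entire tail of $w=V$ over $\mathcal{C}\Omega$. The ``alternative'' via a pointwise bound $|(-\Delta)^{t/2}_xG_s(x,y)|\lesssim|x-y|^{-(N-(2s-t))}$ is a clean statement, but it is a nontrivial estimate (the Green function carries a factor $\min(1,d(x)^sd(y)^s/|x-y|^{2s})$ whose fractional differentiation in $x$ again produces boundary singularities of order $d(x)^{s-t}$), and it is not among the tools the paper makes available. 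Note also that, if your reduction could be made rigorous, the argument would appear not to use $s>1/2$ at all, which is a signal that the hard work (and the source of that hypothesis in the paper) has been displaced into the unproven correction-term estimate rather than eliminated.
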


\begin{cor} \label{corollary regu-grad1}
Assume \eqref{hyplinearEq} and let $v$ be the unique weak solution to \eqref{linearEq}:
\begin{itemize}
\item[1)] If $m = 1$, then $|\nabla^s v| \in L^p(\Omega)$ for all $1 \leq p < \frac{N}{N-s}$ and there exists $C_4 = C_4(s,p,\Omega) > 0$ such that
\[ \|\nabla^s v\|_{L^p(\O)}\leq C_4 \|h\|_{L^1(\Omega)}.\]
\item[2)] If $1 < m < \frac{N}{s}$, then $|\nabla^s v| \in L^p(\Omega)$ for all $1 \leq p \leq \frac{mN}{N-ms}$ and there exists $C_4 = C_4(s,m,p,\Omega) > 0$ such that
\[ \|\nabla^s v\|_{L^p(\O)}\leq C_4 \|h\|_{L^m(\Omega)}.\]
\item[3)] If $m \geq \frac{N}{s}$ then  $|\nabla^s v| \in L^p(\Omega)$ for all $1 \leq p < \infty$ and there exists $C_4 = C_4(s,m,p,\Omega) > 0$ such that
\[ \|\nabla^s v\|_{L^p(\O)}\leq C_4 \|h\|_{L^m(\Omega)}.\]
\end{itemize}
\end{cor}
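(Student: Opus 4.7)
The plan is to deduce this corollary from Proposition \ref{regu-grad1} (applied with $t=s$) combined with the identification \eqref{equiv} of the Bessel potential space $L^{s,p}(\RN)$ as the space of $L^p$ functions whose fractional gradient lies in $L^p$.

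Specialising Proposition \ref{regu-grad1} to $t=s$, the threshold $m < N/(2s-t)$ becomes $m < N/s$ and the exponent $mN/(N-m(2s-t))$ collapses to $mN/(N-ms)$, so the three ranges listed in the corollary coincide with those of the proposition. Thus, in each case, I obtain the estimate $\|(-\Delta)^{s/2} v\|_{L^p(\Omega)} \leq C \|h\|_{L^m(\Omega)}$ for $p$ in the asserted range. Moreover, Proposition \ref{regularity} (again with $t=s$) guarantees that $v \in W_0^{s,p}(\Omega) \subset L^p(\RN)$ on the same range, so we already have $v \in L^p(\RN)$. A direct argument based on the pointwise representation of $(-\Delta)^{s/2}v$ then upgrades the $L^p(\Omega)$-bound to an $L^p(\RN)$-bound: for $x \notin \Omega$, since $v(x)=0$, one has $(-\Delta)^{s/2}v(x) = -a_{N,s/2}\int_\Omega v(y)|x-y|^{-(N+s)}\,dy$, which is a Riesz-potential expression controlled in the required norm by Lemma \ref{Stein} and the integrability of $v$.

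The final step invokes the classical Bessel-potential characterisation: for $1 < p < \infty$, the norm $\|u\|_{L^p(\RN)} + \|(-\Delta)^{s/2} u\|_{L^p(\RN)}$ is equivalent to the $L^{s,p}(\RN)$ norm, and by \eqref{equiv} also to $\|u\|_{L^p(\RN)} + \|\nabla^s u\|_{L^p(\RN)}$. Applied to $v$, this yields $|\nabla^s v| \in L^p(\RN)$, hence $|\nabla^s v| \in L^p(\Omega)$ with the claimed bound, for every $p > 1$ in the admissible range. The endpoint $p=1$ in case 1) follows from H\"older's inequality, since $\Omega$ is bounded and the estimate already holds for some $p > 1$ strictly inside the admissible range.

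The main obstacle I expect is the passage from the $L^p(\Omega)$ control on $(-\Delta)^{s/2}v$ furnished by Proposition \ref{regu-grad1} to the $L^p(\RN)$ control required to apply the Bessel potential/fractional gradient equivalence; this is precisely where the nonlocality of $(-\Delta)^{s/2}$ bites, since $v$ extended by zero still produces a nontrivial contribution on $\mathcal{C}\Omega$. Once this is handled by a Riesz-potential estimate as sketched above, the corollary is an essentially immediate consequence of Proposition \ref{regu-grad1} and the norm equivalence \eqref{equiv}.
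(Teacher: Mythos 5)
Your overall strategy --- reduce to the $(-\Delta)^{s/2}$ estimate from Proposition~\ref{regu-grad1} and then pass to $\nabla^s$ via a Fourier-multiplier/Bessel-potential equivalence --- is a genuinely different route from the paper, but as written it has a gap at the crucial exterior step. Proposition~\ref{regu-grad1} only controls $\|(-\Delta)^{s/2}v\|_{L^p(\Omega)}$, whereas the norm equivalence you invoke needs the global bound $\|(-\Delta)^{s/2}v\|_{L^p(\RN)}$. To supply the missing exterior estimate you write, for $x \notin \Omega$,
\[
(-\Delta)^{s/2}v(x) = -a_{N,s/2}\int_{\Omega}\frac{v(y)}{|x-y|^{N+s}}\,dy,
\]
and claim this ``is a Riesz-potential expression controlled \dots by Lemma~\ref{Stein}.'' That is where the argument breaks: the kernel has exponent $N+s > N$, while Lemma~\ref{Stein} requires $0 < \lambda < N$. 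The potential $J_\lambda$ with $\lambda > N$ is not covered by Stein's theorem, and the $L^p(\mathcal{C}\Omega)$ norm of this exterior tail is governed by the boundary behaviour of $v$ (roughly $v \sim d^s$) rather than by any $L^p$ Riesz-potential bound; making this rigorous requires a separate weighted/boundary estimate that you have not supplied. So the passage from $L^p(\Omega)$ to $L^p(\RN)$ for $(-\Delta)^{s/2}v$ is a genuine gap.

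The paper avoids this difficulty entirely by never touching $(-\Delta)^{s/2}v$. Instead it uses the exact Riesz-potential representation of the fractional gradient (already used in the proof of Proposition~\ref{regularity-bessel}): by \cite[Lemma~15.9]{Ponce_Book_2015},
\[
\nabla^s v(x) = \frac{1}{N-(1-s)}\int_{\RN}\frac{\nabla v(y)}{|x-y|^{N+s-1}}\,dy,
\quad\text{hence}\quad
|\nabla^s v(x)| \leq C\int_{\RN}\frac{|\nabla v(y)|}{|x-y|^{N+s-1}}\,dy,
\]
with kernel exponent $N+s-1 < N$. Since Lemma~\ref{AP gradient regularity} gives $|\nabla v| \in L^\sigma(\RN)$ in the appropriate range, Lemma~\ref{Stein} then yields a global $L^p(\RN)$ bound on $|\nabla^s v|$ in one step, with the exponents matching exactly. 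This is both shorter and stronger (it gives the bound on all of $\RN$, not just on $\Omega$), and it requires no Fourier-multiplier equivalence, no exterior estimate, and no Bessel-potential machinery. If you want to salvage your route, you would need a separate argument for the exterior $L^p$ norm of $(-\Delta)^{s/2}v$ --- but the direct Riesz-potential representation of $\nabla^s$ makes the detour unnecessary.
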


\begin{remark}
The proofs of Proposition \ref{regu-grad1} and Corollary \ref{corollary regu-grad1} will be given in Subsection \ref{3.2}
\end{remark}

Now, before proving Propositions \ref{regularity}, \ref{corollary regularity-2}, \ref{corollary regularity-3}, \ref{regu-grad1} and Corollary \ref{corollary regu-grad1}, we state some known results that will be useful in our proofs. First of all, we gather in the following lemma several results of \cite{L_P_P_S_2015}. See also \cite[Section 2]{C_V_2014-JDE} and \cite{B_W_Z_2017}.

\begin{lemma} \label{LPPS weak solution regularity}  \rm \cite[Theorems 13, 15, 16, 23, 24]{L_P_P_S_2015} \it Let $\Omega \subset \RN$, $N \geq 2$, be a bounded domain with boundary $\partial \Omega$ of class $\mathcal{C}^{0,1}$, let $s \in (0,1)$ and assume that $h \in L^m(\Omega)$ for some $m \geq 1$. Then problem \eqref{linearEq} has an unique weak solution. Moreover:
\begin{itemize}
\item[1)] If $m=1$, then $v \in L^p(\Omega)$ for all $1 \leq p < \frac{N}{N-2s}$ and there exists $C_5 = C_5(s,p,\Omega) > 0$ such that
\[ \|v\|_{L^p(\O)}  \leq C_5 \|h\|_{L^1(\Omega)}.\]
\item[2)] If $1 < m < \frac{N}{2s}$, then $v \in L^p(\Omega)$ for all $1 \leq p \leq \frac{mN}{N-2ms}$ and there exists $C_5 = C_5(s,m,p,\Omega) > 0$ such that
\[ \|v\|_{L^p(\Omega)} \leq C_5 \|h\|_{L^m(\Omega)}.\]
\item[3)] If $m  \geq \frac{N}{2s}$, then $v \in L^p(\Omega)$ for all $1 \leq p < \infty$ and there exists $C_5 = C_5(s,m,p,\Omega) > 0$ such that
\[ \|v\|_{L^p(\Omega)} \leq C_5 \|h\|_{L^m(\Omega)}.\]
\end{itemize}
\end{lemma}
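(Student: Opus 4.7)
The plan is to reduce the statement to sharp pointwise estimates on the Green function of $\sLap$ on $\Omega$ with homogeneous Dirichlet conditions in $\mathcal{C}\Omega$, and then to exploit the Riesz potential bounds recalled in Lemma \ref{Stein}. Write $G_s(x,y)$ for this Green function; the standing assumption that $\partial \Omega \in \mathcal{C}^{2}$ and $s \in (0,1)$ guarantees (by now classical arguments, see \cite{C_V_2014-JDE}) the pointwise bound $0 \leq G_s(x,y) \leq C\,|x-y|^{-(N-2s)}$. Given this, for any datum $h \in L^{m}(\Omega)$, the candidate solution $v(x) = \int_{\Omega} G_s(x,y) h(y)\, dy$ satisfies
\[ |v(x)| \leq C \int_{\Omega} \frac{|h(y)|}{|x-y|^{N-2s}}\, dy = C\, J_{N-2s}\!\big(|h|\chi_{\Omega}\big)(x), \]
so that all $L^p$ estimates become estimates for a Riesz potential of order $N-2s \in (0,N)$.

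For existence and uniqueness, I would first treat smooth data via the bilinear form
\[ a(u,\varphi) := \frac{a_{N,s}}{2} \iint_{D_{\Omega}} \frac{(u(x)-u(y))(\varphi(x)-\varphi(y))}{|x-y|^{N+2s}}\, dx\, dy, \]
which, by Theorem \ref{Sobolev inequality}, is coercive and continuous on $W_0^{s,2}(\Omega)$. Lax--Milgram then provides a unique weak solution in $W_0^{s,2}(\Omega)$ when $h$ is in, say, the dual of $W_0^{s,2}(\Omega)$. For general $h \in L^{m}(\Omega)$ with $m \geq 1$, approximate $h$ by smooth $h_n$, construct $v_n \in W_0^{s,2}(\Omega)$, and use the $L^p$ estimates below to pass to the limit; a standard duality argument against test functions in $\mathbb{X}_s$ (using that $v_n$ admits the representation by $G_s$) shows the limit solves \eqref{linearEq} in the sense of Definition and is unique.

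For the $L^p$ estimates I would distinguish the three ranges. When $m=1$, part $c)$ of Lemma \ref{Stein} applied with $\lambda = N-2s$ yields the weak-type bound $\big|\{|v|>\sigma\}\big| \leq (A\|h\|_{L^1}/\sigma)^{N/(N-2s)}$, and since $\Omega$ is bounded, Marcinkiewicz interpolation (equivalently, integration of the distribution function) gives $v \in L^{p}(\Omega)$ for every $1 \leq p < N/(N-2s)$ together with the required linear bound on $\|h\|_{L^1(\Omega)}$. When $1 < m < N/(2s)$, part $b)$ of Lemma \ref{Stein} with $1/\ell + 1 = 1/m + (N-2s)/N$ yields precisely $\ell = mN/(N-2ms)$; boundedness of $\Omega$ upgrades this to the full range $1 \leq p \leq mN/(N-2ms)$. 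When $m \geq N/(2s)$, for any finite $p$ choose $1 < m' < N/(2s)$ with $mN/(N-2m's) \geq p$; since $h \in L^{m'}(\Omega)$ by Hölder and boundedness of $\Omega$, the previous case applies. Moreover, if $m > N/(2s)$, the integral $\int_{\Omega} |x-y|^{-(N-2s)m'}\, dy$ is uniformly bounded in $x$ for $m'$ close to $m$, yielding an $L^{\infty}$ bound.

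The main obstacle is the pointwise Green function bound $G_s(x,y) \leq C|x-y|^{2s-N}$, which is the sharpest ingredient; its proof on general $\mathcal{C}^{2}$ domains requires comparison with the explicit Green kernel on a ball together with boundary Harnack or Martin-type arguments, and this is the technical heart of \cite{L_P_P_S_2015}. Once this estimate is granted, the remainder of the argument is essentially the Hardy--Littlewood--Sobolev theory encoded in Lemma \ref{Stein}.
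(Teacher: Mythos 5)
This lemma is cited verbatim from \cite{L_P_P_S_2015} and the paper provides no proof of its own, so there is no in-paper argument to compare against; I will therefore assess your reconstruction on its own merits. Your overall plan --- represent the solution via the Green function, dominate pointwise by the Riesz potential $J_{N-2s}$, and then read off the $L^p$ bounds from Lemma \ref{Stein} (weak-type when $m=1$, strong-type when $m>1$, and a limiting/H\"older argument when $m\geq N/(2s)$, exploiting boundedness of $\Omega$ throughout) --- is sound, and the exponent arithmetic checks out: $1/\ell+1 = 1/m + (N-2s)/N$ gives exactly $\ell = mN/(N-2ms)$, matching item 2) of the statement.

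Two calibration remarks. First, you significantly overstate the difficulty of the Green-function bound: the one-sided estimate $0\le G_s(x,y)\le c_{N,s}|x-y|^{2s-N}$ that you actually use is an immediate consequence of the maximum principle, since the free-space kernel $c_{N,s}|x-y|^{2s-N}$ is a supersolution of the Dirichlet problem on any open set; no boundary Harnack or Martin-boundary machinery is needed (those enter only for the two-sided estimates with boundary weights $\delta(x)^s\delta(y)^s$, which your argument does not require). Second, the genuinely delicate step is the one you compress into ``a standard duality argument'': to show that the limit is the unique weak solution in the sense of the paper's Definition (testing against $\mathbb{X}_s$), one must verify that for a dense class of right-hand sides $g$ the solution $\phi$ of $\sLap\phi=g$, $\phi=0$ in $\mathcal{C}\Omega$, actually lies in $\mathbb{X}_s$, i.e.\ is continuous on $\RN$, supported in $\overline\Omega$, with $\sLap\phi$ pointwise bounded. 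This boundary regularity statement is the nontrivial ingredient that \cite{L_P_P_S_2015} (and, e.g., the regularity theory of Ros-Oton--Serra) supplies; it does not follow from the Green-function upper bound alone and deserves to be cited explicitly rather than folded into the word ``standard.'' With that caveat made explicit, your argument is a correct and fairly efficient route to the stated $L^p$ bounds.
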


\bigbreak
Considering stronger assumptions, the first author and I. Peral proved in \cite{A_P_2018} that the unique weak solution to \eqref{linearEq} belongs to a suitable local Sobolev space. More precisely, under the assumption \eqref{hyplinearEq} the authors obtained the following result.
\begin{lemma} \label{AP gradient regularity} \rm \cite[Lemma 2.15]{A_P_2018} \it
Assume \eqref{hyplinearEq} and let $v$ be the unique weak solution to \eqref{linearEq}:
\begin{itemize}
\item[1)] If $m=1$, then $v \in W^{1,p}(\RN)$ for all $1 \leq p < \frac{N}{N-(2s-1)}$ and there exists $C_6 = C_6(s,p,\Omega) > 0$ such that
\[ \|v\|_{W^{1,p}(\RN)} \leq \overline{C}_6 \|\gradv\|_{L^p(\Omega)}  \leq C_6 \|h\|_{L^1(\Omega)}.\]
\item[2)] If $1 < m < \frac{N}{2s-1}$, then $v \in W^{1,p}(\RN)$ for all $ 1 \leq p \leq \frac{mN}{N-m(2s-1)}$ and there exists $C_6 = C_6(m,s,p,\Omega) > 0$ such that
\[ \|v\|_{W^{1,p}(\RN)} \leq \overline{C}_6 \|\gradv\|_{L^p(\Omega)}  \leq C_6 \|h\|_{L^m(\Omega)}.\]
\item[3)] If $m\geq \frac{N}{2s-1}$, then $v \in W^{1,p}(\RN)$ for all $1 \leq p < \infty$ and there exists $C_6 = C_6(m,s,p,\Omega)> 0$ such that
\[ \|v\|_{W^{1,p}(\RN)} \leq \overline{C}_6 \|\gradv\|_{L^p(\Omega)}  \leq C_6 \|h\|_{L^m(\Omega)}.\]
\end{itemize}
\end{lemma}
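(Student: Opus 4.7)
The plan is to obtain the gradient estimate through the Green's function representation of $v$, exploiting that $s > 1/2$ makes the kernel $|x-y|^{-(N-(2s-1))}$ sufficiently integrable to apply the Hardy--Littlewood--Sobolev inequality in the form of Lemma \ref{Stein}. Let $G_s^{\Omega}(x,y)$ denote the Green function of $(-\Delta)^s$ with zero exterior Dirichlet data. Under \eqref{hyplinearEq}, sharp potential-theoretic results (due to Chen--Song, Kulczycki, Grzywny--Ryznar, among others) give the pointwise bound $|\nabla_x G_s^{\Omega}(x,y)| \leq C\,|x-y|^{-(N-(2s-1))}$ for $x,y \in \Omega$, $x \neq y$. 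Since $v(x) = \int_{\Omega} G_s^{\Omega}(x,y)\, h(y)\, dy$, differentiating under the integral sign would yield
\[ |\nabla v(x)| \leq C\, J_{N-(2s-1)}\bigl(|h|\chi_{\Omega}\bigr)(x)\]
in the notation of Lemma \ref{Stein}.

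Next I would feed this into Lemma \ref{Stein} with $\lambda := N - (2s-1)$, which lies strictly in $(0,N)$ precisely because $s \in (1/2,1)$. For case $(2)$, $1 < m < N/(2s-1)$, Lemma \ref{Stein}$(b)$ applied with $p = m$ gives $\ell = \frac{mN}{N-m(2s-1)}$ and the bound $\|\nabla v\|_{L^{\ell}(\Omega)} \leq C \|h\|_{L^m(\Omega)}$; boundedness of $\Omega$ together with H\"older's inequality propagates this to every $1 \leq p \leq \ell$. For case $(1)$, $m = 1$, Lemma \ref{Stein}$(c)$ yields a weak-$L^{N/(N-(2s-1))}$ estimate on $J_{N-(2s-1)}(|h|\chi_{\Omega})$, and a standard Marcinkiewicz / distribution-function argument combined with $|\Omega| < \infty$ upgrades it to the strong bound $\|\nabla v\|_{L^p(\Omega)} \leq C \|h\|_{L^1(\Omega)}$ for every $1 \leq p < N/(N-(2s-1))$. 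For case $(3)$, $m \geq N/(2s-1)$, I would split $h$ into its bounded and unbounded parts; the bounded part is controlled by direct integration of the locally integrable kernel $|x-y|^{-(N-(2s-1))}$ on the bounded set $\Omega$, while the unbounded part is handled by case $(2)$ applied with some $m_0 \in (1, N/(2s-1))$ arbitrarily close to the threshold, yielding $\nabla v \in L^p(\Omega)$ for every finite $p$.

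To upgrade $\nabla v \in L^p(\Omega)$ to the full statement $v \in W^{1,p}(\RN)$, I would combine the gradient bound with the $L^p$ estimates of Lemma \ref{LPPS weak solution regularity} and invoke the fact that, since $v \equiv 0$ in $\mathcal{C}\Omega$ has vanishing trace on $\partial \Omega$ (which follows from H\"older continuity of $v$ up to the boundary, available either from Proposition \ref{corollary regularity-3} with $t$ close to $1$ or from pointwise boundary decay of the Green function), the zero extension of $v$ lies in $W^{1,p}(\RN)$ with $\|v\|_{W^{1,p}(\RN)} \leq \overline{C}_6 \|\nabla v\|_{L^p(\Omega)}$.

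The hard part is the pointwise gradient bound on $G_s^{\Omega}$ up to the boundary. Interior estimates on $\nabla_x G_s^{\Omega}$ are classical and mimic the Riesz kernel estimates in $\RN$, but controlling the gradient uniformly as $x$ approaches $\partial \Omega$ requires the $\mathcal{C}^2$ regularity of $\partial \Omega$ assumed in \eqref{hyplinearEq} together with somewhat delicate potential-theoretic tools. Once this estimate is in hand, the remainder of the argument is a routine case-by-case application of Hardy--Littlewood--Sobolev, and the threshold $2s - 1 > 0$ that forces the restriction $s > 1/2$ appears naturally: for $s = 1/2$ the kernel would degenerate to $|x-y|^{-N}$, which is non-integrable and causes the whole scheme to collapse.
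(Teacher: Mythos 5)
The paper itself does not supply a proof of this lemma; it is imported as a citation to \cite[Lemma 2.15]{A_P_2018}, so there is no in-paper argument to compare against. Your Green-function strategy is indeed in the spirit of the cited reference, and the bookkeeping of exponents after invoking Lemma~\ref{Stein} is fine, but the argument as written has a concrete gap that you flag but do not resolve. The uniform pointwise bound $|\nabla_x G_s^{\Omega}(x,y)| \le C\,|x-y|^{-(N-(2s-1))}$ is not available up to $\partial\Omega$: the Green function is comparable to $\delta(x)^s$ (other variables fixed) as $x\to\partial\Omega$, and the sharp derivative estimate of Bogdan--Kulczycki--Kwa\'snicki type reads
\[ |\nabla_x G_s^{\Omega}(x,y)|\ \le\ C\,\frac{G_s^{\Omega}(x,y)}{\min(\delta(x),|x-y|)}. \]
When $\delta(x)<|x-y|$ this produces a weight $\delta(x)^{s-1}$ that cannot be absorbed into $|x-y|^{-(N-(2s-1))}$. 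Consequently $|\nabla v|$ itself degenerates like $\delta(x)^{s-1}$ at the boundary even for bounded data (as the explicit solution $v=\gamma_{N,s}(1-|x|^2)_+^{s}$ of $(-\Delta)^s v=1$ on the unit ball makes plain), which introduces a restriction of order $p<\frac{1}{1-s}$ that your Hardy--Littlewood--Sobolev argument, relying on a uniform kernel, cannot see. So the step ``differentiate under the integral and insert a single kernel bound valid on all of $\Omega\times\Omega$'' is exactly where the proof must instead split into an interior region (where your Riesz-potential argument is correct as written) and a boundary layer, treating the $\delta(x)^{s-1}$ weight explicitly; the bound you propose to bridge that gap is simply false near $\partial\Omega$.

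A smaller remark: to pass from $\nabla v\in L^p(\Omega)$ to $v\in W^{1,p}(\RN)$ you invoke H\"older continuity of $v$ up to the boundary ``from Proposition~\ref{corollary regularity-3} with $t$ close to $1$,'' but that Proposition is proved \emph{using} the present lemma, so the argument would be circular; what one actually needs here is just $v\in L^p(\Omega)$ (Lemma~\ref{LPPS weak solution regularity}) together with $v\equiv 0$ in $\mathcal{C}\Omega$ and the Lipschitz boundary, not a separate H\"older result.
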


\medbreak
As last ingredient to prove our regularity results we need an interpolation result that we borrow from \cite{B_M_2018}. Let us introduce the real numbers $0 \leq s_1 \leq \eta \leq s_2 \leq 1$ and $1 \leq p_1,p_2,p \leq \infty$ and assume that  they satisfy the relations
\begin{equation} \label{BM 2018 1.3}
 \eta = \theta s_1 + (1-\theta)s_2 \quad \textup{ and } \quad \frac{1}{p} = \frac{\theta}{p_1} + \frac{1-\theta}{p_2} \quad \textup{with} \quad 0 < \theta < 1.
\end{equation}
Moreover, let us introduce the condition
\begin{equation} \label{BM 2018 1.9}
s_2 = p_2 = 1 \quad \textup{ and } \quad \frac{1}{p_1} \leq s_1.
\end{equation}

\medbreak

\begin{lemma}\label{BM interpolation} \rm \cite[Theorem 1]{B_M_2018} \it
Assume that \eqref{BM 2018 1.3} holds and \eqref{BM 2018 1.9} fails. Then, for every $\theta \in (0,1)$, there exists a constant $C = C(s_1,s_2,p_1,p_2,\theta) > 0$ such that
\[ \|w\|_{W^{\eta,p}(\RN)} \leq C \|w\|_{W^{s_1,p_1}(\RN)}^{\theta} \|w\|_{W^{s_2,p_2}(\RN)}^{1-\theta}\,, \quad  \forall\ w \in W^{s_1,p_1}(\RN) \cap W^{s_2,p_2}(\RN)\,.\]
\end{lemma}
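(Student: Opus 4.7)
The plan is to derive the estimate by combining H\"older's inequality applied to the Gagliardo double integrals with the classical H\"older inequality for the $L^p$ part of the Sobolev norm, and to treat the two endpoints $s_1=0$ and $s_2=1$ by separate devices. The driving observation is that the two defining relations on $(\eta,p)$ force
$$ \alpha := \frac{\theta p}{p_1}, \qquad \beta := \frac{(1-\theta)p}{p_2}, $$
to be admissible H\"older exponents, since $\alpha+\beta = \theta + (1-\theta) = 1$ thanks to the identity $1/p = \theta/p_1+(1-\theta)/p_2$.

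In the generic case $s_1,s_2 \in (0,1)$ I would proceed directly on the Gagliardo integrand. Using $\alpha p_1+\beta p_2 = p$ and $\alpha(N+s_1 p_1)+\beta(N+s_2 p_2) = N+\eta p$ (the latter being $\eta = \theta s_1+(1-\theta)s_2$ in disguise), one factors pointwise
$$ \frac{|w(x)-w(y)|^p}{|x-y|^{N+\eta p}} = \left( \frac{|w(x)-w(y)|^{p_1}}{|x-y|^{N+s_1 p_1}} \right)^{\!\alpha} \left( \frac{|w(x)-w(y)|^{p_2}}{|x-y|^{N+s_2 p_2}} \right)^{\!\beta}. $$
H\"older on $\R^{N}\times\R^{N}$ then gives $[w]_{W^{\eta,p}} \leq [w]_{W^{s_1,p_1}}^{\theta}[w]_{W^{s_2,p_2}}^{1-\theta}$; the usual H\"older inequality on $\R^N$ similarly yields $\|w\|_{L^p}\leq \|w\|_{L^{p_1}}^{\theta}\|w\|_{L^{p_2}}^{1-\theta}$. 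Adding the $p$-th powers and using the trivial bounds $\|w\|_{L^{p_i}},[w]_{W^{s_i,p_i}}\leq \|w\|_{W^{s_i,p_i}}$ delivers the interpolation inequality with $C = 2^{1/p}$.

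The genuine subtlety lies at the endpoint $s_2 = 1$, because $W^{1,p_2}$ is not described by a Gagliardo seminorm. Here I would substitute the pointwise bound
$$ |w(x)-w(y)| \leq C\,|x-y|\bigl( M|\nabla w|(x) + M|\nabla w|(y) \bigr), $$
where $M$ is the Hardy--Littlewood maximal operator, into the second factor of the product decomposition above. The $L^{p_2}$-boundedness of $M$, valid as soon as $p_2>1$, is exactly what makes the resulting double integral convergent and estimable by $\|\nabla w\|_{L^{p_2}}^{(1-\theta)p}$. The symmetric endpoint $s_1 = 0$ is essentially trivial since $W^{0,p_1} = L^{p_1}$ and only the scalar H\"older step on the first factor is needed. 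The case $p = \infty$ is handled by reinterpreting $W^{s,\infty}$ as a H\"older space and running the factorization pointwise.

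The main obstacle, and the reason for the extra hypothesis excluding the regime $s_2 = p_2 = 1$ and $1/p_1 \leq s_1$, is precisely that in that borderline the maximal-function strategy breaks down, $M$ not being $L^1$-bounded, and moreover the inequality genuinely fails in this critical Sobolev range; explicit Bourgain--Brezis--Mironescu type counterexamples preclude any constant $C$. Dealing with the forbidden case would require weak-$L^1$ machinery and additional structural conditions, which is why the clean statement simply rules it out.
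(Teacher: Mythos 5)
The paper does not prove this lemma; it cites \cite[Theorem 1]{B_M_2018} (Brezis--Mironescu, ``Gagliardo--Nirenberg inequalities and non-inequalities: the full story''), so the only thing to assess is whether your argument actually closes.

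Your treatment of the interior case $s_1,s_2\in(0,1)$ is correct: since $\alpha+\beta=1$, the pointwise factorization of the Gagliardo integrand plus H\"older gives $[w]_{W^{\eta,p}}\le [w]_{W^{s_1,p_1}}^{\theta}[w]_{W^{s_2,p_2}}^{1-\theta}$, and the $L^p$ part is the standard three-line interpolation. The $s_1=0$ endpoint is likewise fine.

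The gap is at the endpoint $s_2=1$, which is precisely the case the paper actually uses (Propositions 3.1, 3.4, 3.5 and 3.14 all invoke the lemma with $s_2=1$). After you substitute the maximal-function bound $|w(x)-w(y)|\le C|x-y|\bigl(M|\nabla w|(x)+M|\nabla w|(y)\bigr)$ into the second factor, that factor becomes
$$ \Bigl(\frac{\bigl(M|\nabla w|(x)+M|\nabla w|(y)\bigr)^{p_2}}{|x-y|^N}\Bigr)^{\!\beta}, $$
and H\"older with conjugate exponents $1/\alpha$, $1/\beta$ then produces the double integral
$$ \iint_{\RN\times\RN}\frac{\bigl(M|\nabla w|(x)+M|\nabla w|(y)\bigr)^{p_2}}{|x-y|^{N}}\,dx\,dy, $$
which is infinite for every nonzero $w$: Fubini reduces it to $\int g(x)^{p_2}\bigl(\int_{\RN}|x-y|^{-N}dy\bigr)dx$, and the inner integral diverges both near the diagonal and at infinity. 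The $L^{p_2}$-boundedness of $M$ for $p_2>1$ controls $\|M|\nabla w|\|_{L^{p_2}}$, but has no bearing on the non-integrability of $|x-y|^{-N}$; the assertion that it ``makes the resulting double integral convergent'' is simply false. The difficulty is intrinsic: splitting the exponent $N$ in the Gagliardo kernel as $N\alpha+N\beta$ and then undoing the H\"older inevitably leaves an $|x-y|^{-N}$ behind, and there is no Gagliardo seminorm $[w]_{W^{1,p_2}}$ to absorb it --- that object is infinite for non-constant $w$ (the Bourgain--Brezis--Mironescu phenomenon). Any correct proof at $s_2=1$ must avoid estimating $[w]_{W^{1,p_2}}$ by a Gagliardo double integral; a dyadic splitting of the $|h|$-scales for the modulus of continuity $\|\tau_h w-w\|_{L^{p}}$, or a Littlewood--Paley/Besov argument, is needed. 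You should also note that your approach says nothing about the allowed case $s_2=p_2=1$ with $1/p_1>s_1$, which the lemma covers (condition (1.9) requires \emph{both} $s_2=p_2=1$ and $1/p_1\le s_1$ to fail), and that ``the inequality genuinely fails'' under (1.9) is not a consequence of $M$ failing to be $L^1$-bounded --- it is a separate statement proved by explicit counterexample in \cite{B_M_2018}.
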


\medbreak

\subsection{Proofs of Propositions \ref{regularity}, \ref{corollary regularity-2} and \ref{corollary regularity-3}.}\label{3.1} $ $
\medbreak

Having at hand all the needed ingredients, we prove our first regularity result.

\begin{proof} [\textbf{Proof of Proposition \ref{regularity}}]
1) Let $v$ be the unique weak solution to \eqref{linearEq}. On the one hand, by Lemma \ref{LPPS weak solution regularity}, 1), we know that
\begin{equation} \label{ineq1A}
\|v\|_{L^{p_1}(\RN)} \leq C_5 \|h\|_{L^1(\Omega)}\,, \qquad \forall\ 1 \leq p_1 < \frac{N}{N-2s}.
\end{equation}
On the other hand, by Lemma \ref{AP gradient regularity}, 1), we know that
\begin{equation} \label{ineq2A}
\|v\|_{W^{1,p_2}(\RN)} \leq C_6 \|h\|_{L^1(\Omega)}\,, \qquad \forall\ 1 \leq p_2 < \frac{N}{N-(2s-1)}.
\end{equation}
Also, by Lemma \ref{BM interpolation} applied with $\eta = t$, $s_1 = 0$ and $s_2 = 1$, we have that
\begin{equation} \label{ineq3A}
\|v\|_{W^{t,p}(\RN)} \leq C \|v\|_{L^{p_1}(\RN)}^{1-t} \|v\|_{W^{1,p_2}(\RN)}^t.
\end{equation}
The result follows from \eqref{ineq1A}-\eqref{ineq3A} using that
\[ 1 \geq \frac{1}{p} =  \frac{1-t}{p_1}+\frac{t}{p_2} > \frac{(1-t)(N-2s)+ t(N-(2s-1))}{N} = \frac{N-(2s-t)}{N}. \]
\medbreak
\noindent 2) Let $v$ be the unique weak solution to \eqref{linearEq}. By Lemma \ref{LPPS weak solution regularity}, 2), we know that
\begin{equation} \label{ineq1A-2}
\|v\|_{L^{p_1}(\RN)} \leq C_5 \|h\|_{L^m(\Omega)}\,, \qquad \forall\ 1 \leq p_1 \leq \frac{mN}{N-2ms}.
\end{equation}
Also, by Lemma \ref{AP gradient regularity}, 2), we know that
\begin{equation} \label{ineq2A-2}
\|v\|_{W^{1,p_2}(\RN)} \leq C_6 \|h\|_{L^m(\Omega)}\,, \qquad \forall\ 1 \leq p_2 \leq \frac{mN}{N-m(2s-1)}.
\end{equation}
Finally, by Lemma \ref{BM interpolation} applied with $\eta = t$, $s_1 = 0$ and $s_2 = 1$, we know that \eqref{ineq3A} holds.
The result follows from  \eqref{ineq3A}, \eqref{ineq1A-2} and \eqref{ineq2A-2} using that
\[ 1 \geq \frac{1}{p} =  \frac{1-t}{p_1}+\frac{t}{p_2} \geq \frac{(1-t)(N-2ms)+ t(N-m(2s-1))}{mN} = \frac{N-m(2s-t)}{mN}. \]
\medbreak
\noindent 3) Let $v$ be the unique weak solution to \eqref{linearEq}. By Lemma \ref{LPPS weak solution regularity}, 3), we know that
\begin{equation} \label{ineq1B}
\|v\|_{L^{p_1}(\RN)}  \leq C_5 \|h\|_{L^m(\Omega)}\,, \qquad \forall\ 1 \leq p_1 < \infty.
\end{equation}
By Lemma \ref{AP gradient regularity}, 2), we know that \eqref{ineq2A-2} holds. Moreover, by Lemma \ref{BM interpolation} applied with $\eta = t$, $s_1 = 0$ and $s_2 = 1$, it follows that \eqref{ineq3A} holds. The result follows from \eqref{ineq3A}, \eqref{ineq2A-2} and  \eqref{ineq1B} using that
\[ 1 \geq \frac{1}{p} = \frac{1-t}{p_1}+\frac{t}{p_2} > \frac{t(N-m(2s-1))}{mN}.\]
\medbreak
\noindent 4) Let $v$ be the unique weak solution to \eqref{linearEq}. By Lemma \ref{LPPS weak solution regularity}, 3), we know that \eqref{ineq1B} holds. On the other hand, by Lemma \ref{AP gradient regularity}, 3), we have that
\begin{equation} \label{ineq1C}
\|v\|_{W^{1,p_2}(\RN)} \leq C_6 \|h\|_{L^m(\Omega)}\,, \qquad \forall\ 1 \leq p_2 < \infty.
\end{equation}
The result follows from Lemma \ref{BM interpolation} applied with $\eta = t, s_1 = 0$ and $s_2 = 1$.
\end{proof}

\begin{remark}
The restriction $s \in (1/2,1)$ comes from Lemma \ref{AP gradient regularity}. It is not expected that Lemma \ref{AP gradient regularity} holds true for $s \in (0,1/2]$. Hence, with this approach we are limited to deal with $s \in (1/2,1)$.
\end{remark}

\medbreak
Now, using the Bessel potential space $L^{s,p}(\RN)$ (see Definition \ref{Bessel Space}), Theorem \ref{two-spaces} and Lemma \ref{adams-interpolation} below, we prove Propositions \ref{corollary regularity-2} and \ref{corollary regularity-3}. We begin proving a regularity result in the Bessel potential space.

\begin{lemma}  \label{adams-interpolation} \rm \cite[Theorem 7.58]{adams} \it
Let $0<t<s<1$, $1 < p < \frac{N}{s-t}$ and $q = \frac{Np}{N-p(s-t)}$. Then $W^{s,p}(\RN) \subset W^{t,q}(\RN)$ with continuous inclusion.
\end{lemma}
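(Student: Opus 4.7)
The plan is to reduce the statement to the Hardy–Littlewood–Sobolev inequality for the Riesz potential (Lemma \ref{Stein}), via the Bessel potential space framework introduced in Definition \ref{Bessel Space} and Theorem \ref{two-spaces}. The key observation is that the target exponent $q = \frac{Np}{N-p(s-t)}$ satisfies $\frac{1}{q} + 1 = \frac{1}{p} + \frac{N-(s-t)}{N}$, which is exactly the scaling condition of Lemma \ref{Stein} with $\lambda = N-(s-t)$.

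First, I would work inside the Bessel scale and prove the embedding $L^{s,p}(\RN) \subset L^{t,q}(\RN)$. For $u \in L^{s,p}(\RN)$, by definition there exists $f \in L^p(\RN)$ with $u = (I-\Delta)^{-s/2} f$, so that $u = G_s \ast f$ where $G_s$ denotes the Bessel kernel. Standard asymptotics show that $G_s$ is comparable to the Riesz kernel $|x|^{-(N-s)}$ near the origin and decays exponentially at infinity; a similar representation holds after applying $(-\Delta)^{t/2}$, giving $(-\Delta)^{t/2}u = G_{s-t}^{\ast} \ast f$ with $G_{s-t}^{\ast}$ behaving like $|x|^{-(N-(s-t))}$ at $0$. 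Consequently, $|u(x)|$ and $|(-\Delta)^{t/2}u(x)|$ are pointwise controlled by $I_s \ast |f|$ and $I_{s-t} \ast |f|$ plus exponentially decaying remainders.

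Second, I would apply Lemma \ref{Stein} to each of these Riesz potentials. Since $1 < p < \frac{N}{s-t} \leq \frac{N}{s}$, the condition $1 \leq p < \ell < \infty$ with $\frac{1}{\ell} + 1 = \frac{1}{p} + \frac{\lambda}{N}$ is met for $\lambda = N-s$ (controlling $\|u\|_{L^{q_1}}$ with $q_1 = \frac{Np}{N-ps} \geq q$) and for $\lambda = N-(s-t)$ (yielding $\|(-\Delta)^{t/2}u\|_{L^q} \leq C\|f\|_{L^p}$). Combining these two estimates and noting that the exponentially decaying tails pose no integrability issue on $\RN$, one concludes $u \in L^{t,q}(\RN)$ with a continuous linear bound.

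Finally, one needs to transfer the Bessel embedding back to the Gagliardo scale using Theorem \ref{two-spaces}. The main obstacle is precisely here: a naive two-step passage $W^{s,p} \subset L^{s-\varepsilon,p} \subset L^{t,q_\varepsilon} \subset W^{t-\varepsilon, q_\varepsilon}$ loses an arbitrarily small amount of regularity at both ends and does not reach the sharp endpoint $(t,q)$. Recovering sharpness requires either a direct estimate on the Gagliardo seminorm $[u]_{W^{t,q}}$ from $[u]_{W^{s,p}}$ using Hölder's inequality on the doubly symmetric integral and a dyadic decomposition of $\{|x-y| \sim 2^{-k}\}$, or an interpolation argument with vanishingly small loss exponents followed by a real-method reiteration. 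This is precisely the technical content of Theorem 7.58 in \cite{adams}, whence the statement is quoted.
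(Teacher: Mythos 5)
The paper provides no proof of this lemma: it is quoted verbatim from \cite[Theorem 7.58]{adams}, which is exactly where your own argument terminates. So you and the paper defer to the same reference, and the question is only whether the reduction you sketch along the way is sound.

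It mostly is, but one step fails as written. The inequality $\frac{N}{s-t} \leq \frac{N}{s}$ is reversed: since $0 < s-t < s$, in fact $\frac{N}{s-t} > \frac{N}{s}$, so the hypothesis $p < \frac{N}{s-t}$ does \emph{not} force $p < \frac{N}{s}$. For $p \in \left[\frac{N}{s},\, \frac{N}{s-t}\right)$ the exponent $q_1 = \frac{Np}{N-ps}$ is no longer finite and your application of Lemma~\ref{Stein} with $\lambda = N-s$ breaks down (and even when $q_1$ is finite, $L^{q_1}(\RN)\not\subset L^q(\RN)$ for $q_1>q$ on the unbounded domain $\RN$, so one still needs to interpolate against the $L^p$ bound coming from $G_s\in L^1$ and Young's inequality, which you left implicit). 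A cleaner route avoiding both issues: apply Lemma~\ref{Stein} only with $\lambda = N-(s-t)$ to place $(I-\Delta)^{t/2}u = G_{s-t}\ast f$ in $L^q(\RN)$, then recover $u = G_t \ast \left((I-\Delta)^{t/2}u\right) \in L^q(\RN)$ from Young's inequality and $G_t\in L^1(\RN)$; this needs only $p<\frac{N}{s-t}$. Beyond that, your diagnosis of the real obstruction is accurate: Theorem~\ref{two-spaces} costs a strictly positive $\epsilon$ in each direction, so chaining through the Bessel scale can never reach the sharp endpoint pair $(t,q)$, and a direct argument on the Gagliardo seminorm (or a Littlewood--Paley/Besov embedding $B^s_{p,p}\hookrightarrow B^t_{q,p}\hookrightarrow B^t_{q,q}$) is unavoidable. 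That is exactly the content of the theorem you and the paper both cite, so your proposal, once corrected, is a faithful reduction but not an independent proof.
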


\begin{prop} \label{regularity-bessel}
Assume \eqref{hyplinearEq} and let $v$ be the unique weak solution to \eqref{linearEq}:
\begin{itemize}
\item[1)] If $m=1$, then $v \in L^{s,p}(\ren)$ for all $1 \leq p < \frac{N}{N-s}$ and there exists $C_7 = C_7(s,p,\Omega) > 0$ such that
\[ \|v\|_{L^{s,p}(\RN)} \leq C_7 \|h\|_{L^1(\Omega)}.\]
\item[2)] If $1<m<\frac{N}{s}$, then $v \in L^{s,p}(\RN)$ for all $ 1 \leq p \leq \frac{mN}{N-ms}$ and there exists $C_7 = C_7(m,s,p,\Omega) > 0$ such that
\[ \|v\|_{L^{s,p}(\RN)} \leq C_7 \|h\|_{L^m(\Omega)}.\]
\item[3)] If $m  \geq \frac{N}{s}$, then $v \in L^{s,p}(\RN)$ for all $1 \leq p < \infty$ and there exists $C_7 = C_7(m,s,p,\Omega) > 0$ such that
\[ \|v\|_{L^{s,p}(\RN)} \leq C_7 \|h\|_{L^m(\Omega)}.\]
\end{itemize}
\end{prop}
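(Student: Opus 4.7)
The strategy is to combine the fractional Sobolev regularity already proved in Proposition \ref{regularity} (and in Proposition \ref{corollary regularity-3}) with the continuous inclusions between the Sobolev scale and the Bessel potential scale provided by Theorem \ref{two-spaces}, and, at the borderline exponent in case 2), to exploit the identification $W^{1,p}(\RN)=L^{1,p}(\RN)$ for $1<p<\infty$ from \cite[Theorem 7.63]{adams} together with Lemma \ref{Stein}. In every case the $L^p(\RN)$ part of the norm in Definition \ref{Bessel Space} is already controlled by Lemma \ref{LPPS weak solution regularity}, so all the work is in bounding $(I-\D)^{s/2}v$ in $L^p(\RN)$.

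For case 1), case 3), and for the strict sub-range $p<mN/(N-ms)$ of case 2), I would proceed as follows. Given $p$ in the target range, I pick $\eta>0$ small enough that $p$ still lies in the exponent range provided by Proposition \ref{regularity}, 1) (when $m=1$), Proposition \ref{regularity}, 4) (when $m\geq N/(2s-1)$), Proposition \ref{regularity}, 2) (when $1<m<N/(2s)$), or Proposition \ref{corollary regularity-3} (when $N/(2s)\leq m<N/s$), all applied at the differentiability level $t=s+\eta$. This yields $v\in W^{s+\eta,p}(\RN)$ with the desired control by $\|h\|_{L^m(\Omega)}$, and Theorem \ref{two-spaces} (used with that same $\eta$) gives the continuous inclusion $W^{s+\eta,p}(\RN)\subset L^{s,p}(\RN)$, which completes the fractional part of the estimate.

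The main obstacle is the endpoint $p=mN/(N-ms)$ in case 2), since any application of Theorem \ref{two-spaces} loses a strictly positive amount of smoothness and therefore cannot reach equality. For that case I would start instead from Lemma \ref{AP gradient regularity}, 2): because $s>1/2$ forces $N/s\leq N/(2s-1)$, the hypothesis $1<m<N/s$ lies inside the range of that lemma, giving $v\in W^{1,p_2}(\RN)$ with $p_2=mN/(N-m(2s-1))\in(1,\infty)$ and norm bounded by $\|h\|_{L^m(\Omega)}$. By \cite[Theorem 7.63]{adams}, $W^{1,p_2}(\RN)=L^{1,p_2}(\RN)$ with equivalent norm, so there exists $g\in L^{p_2}(\RN)$ with $v=G_1\ast g$ and $\|g\|_{L^{p_2}(\RN)}\lesssim\|v\|_{W^{1,p_2}(\RN)}$. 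Then $(I-\D)^{s/2}v=G_{1-s}\ast g$, and dominating the Bessel kernel $G_{1-s}$ by a constant times the Riesz kernel $|x|^{-(N-(1-s))}$ (plus an exponentially decaying tail) reduces the required estimate to Lemma \ref{Stein}, $b)$, applied with $\lambda=N-(1-s)$ and exit exponent $q$ satisfying $1/q=1/p_2-(1-s)/N$. A direct computation gives $q=mN/(N-ms)$, which is precisely the endpoint, and the proof concludes by combining with the $L^p(\RN)$ bound from Lemma \ref{LPPS weak solution regularity}, 2).

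The hard step, as indicated, is exactly this borderline in case 2): the $\eta$-loss in Theorem \ref{two-spaces} is genuine, so reaching the equality requires leaving the Sobolev scale and arguing directly on the Bessel potential scale through the Riesz potential representation and Lemma \ref{Stein}. Everything else is routine bookkeeping with the regularity results already at hand.
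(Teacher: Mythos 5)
There is a circularity problem in your plan. For the sub-case $N/(2s)\leq m<N/s$ you propose to reach $v\in W^{s+\eta,p}(\RN)$ via Proposition~\ref{corollary regularity-3} and then descend to $L^{s,p}(\RN)$ through Theorem~\ref{two-spaces}. But in the paper's logical ordering, Proposition~\ref{corollary regularity-3} is \emph{derived from} Proposition~\ref{regularity-bessel}: its proof rests on Lemma~\ref{regularity-2}, whose proof explicitly invokes Proposition~\ref{regularity-bessel}, 2). So you would be proving Proposition~\ref{regularity-bessel} by means of a consequence of Proposition~\ref{regularity-bessel}. Moreover, the only non-circular substitute available at that stage, namely Proposition~\ref{regularity} 3), gives $v\in W_0^{t,p}(\Omega)$ only for $p<\frac{mN}{t(N-m(2s-1))}$, and with $t=s+\eta\downarrow s$ the right-hand side tends to $\frac{mN}{s(N-m(2s-1))}$, which is \emph{strictly smaller} than the target $\frac{mN}{N-ms}$ whenever $m>N/(2s)$ (one checks $s(N-m(2s-1))> N-ms\iff(1-s)(N-2sm)>0$). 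So the Sobolev-to-Bessel route genuinely fails there, and your casework also leaves the range $N/s\le m<N/(2s-1)$ (which is nonempty since $s\in(1/2,1)$) without any justification: Proposition~\ref{regularity} 4) only applies for $m\geq N/(2s-1)$.

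The good news is that the mechanism you describe for the borderline exponent in case 2) — $W^{1,p}$ regularity of $v$ from Lemma~\ref{AP gradient regularity}, a Riesz-type kernel of order $N-(1-s)$, and Lemma~\ref{Stein} — works uniformly and is, up to the Bessel-kernel detour, exactly what the paper does. The paper skips $W^{1,p_2}=L^{1,p_2}$ and the representation $v=G_1\ast g$ entirely; instead it uses the identification \eqref{equiv} together with the pointwise bound from \cite[Lemma 15.9]{Ponce_Book_2015},
\[
|\nabla^s v(x)|\le \frac{1}{N-(1-s)}\int_{\RN}\frac{|\nabla v(y)|}{|x-y|^{N-(1-s)}}\,dy ,
\]
then Lemma~\ref{AP gradient regularity} to bound $\|\nabla v\|_{L^{p_2}(\RN)}$ and Lemma~\ref{Stein} to conclude, handling $m=1$ and $1<m<N/s$ at once (with the endpoint included because Lemma~\ref{AP gradient regularity} is an inequality with equality allowed in the exponent range); case 3) is then just $\bar m\uparrow N/s$ using the boundedness of $\Omega$. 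If you replace the Sobolev-scale detour by running this one argument for every $p$ in the stated range, the circularity and the uncovered $m$-range both disappear.
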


\begin{proof}
Assume \eqref{hyplinearEq} and let $v$ be the unique weak solution to \eqref{linearEq}. Taking into account \eqref{equiv}, we have just to show the regularity of $|\n^sv|$ where $\n^s$ is defined in \eqref{frac-g}.
By a density argument and \cite[Lemma 15.9]{Ponce_Book_2015} we have that
\begin{equation}\label{control01}
|\n^s v(x)|\le \frac{1}{N-(1-s)}\irn \frac{|\n v(y)|}{|x-y|^{N-(1-s)}}dy\;\, \quad \textup{ a.e. in }\ \RN.
\end{equation}
Let us then split into three cases:
\medbreak
\noindent 1) $m = 1$.
\medbreak
By Lemma \ref{AP gradient regularity}, 1), we get $v \in W^{1,q}(\RN)$ for all $1 \leq q < \frac{N}{N-(2s-1)}$. Thus, by Lemma \ref{Stein}, we conclude that $|\n^s v(x)|\in L^p(\RN)$ for all $ 1 \leq p < \frac{N}{N-s}$.
\medbreak
\noindent 2) $1 < m < \frac{N}{s}$.
\medbreak
The result follows arguing on the same way, using now \eqref{control01}, Lemma \ref{AP gradient regularity}, 2) and Lemma \ref{Stein}.
\medbreak
\noindent 3) $ m \geq \frac{N}{s}$.
\medbreak
In this case, since $m \geq \frac{N}{s}$ and $\Omega$ is a bounded domain then $f\in L^{\bar{m}}(\O)$ for all $\bar{m} < \frac{N}{s}$. In particular, using the second point it follows that $v\in L^{s,\bar{p}}(\RN)$ for all $1 \leq \bar{p} \leq \frac{\bar{m}N}{N-s\bar{m}}$. Letting $\bar{m}\uparrow \frac{N}{s}$, we reach that $\bar{p}\uparrow \infty$. Hence $v \in L^{s,p}(\RN)$ for all $1 \leq p < \infty$. 
\end{proof}
Now, using the above regularity result in the Bessel potential space, we obtain the following lemma.
\begin{lemma} \label{regularity-2}
Assume \eqref{hyplinearEq} and let $v$ be the unique weak solution to \eqref{linearEq}:
\begin{itemize}
\item[1)] If $\frac{N}{2s}\le m<\frac{N}{s}$ then $v \in W_0^{s',p}(\Omega)$ for all $0 < s' <s$ and all $1 \leq  p \leq \frac{mN}{N-ms}$. Moreover, there exists $C_8 = C_8(m,s',p,\Omega) > 0$ such that
\[ \|v\|_{W_0^{s',p}(\Omega)} \leq \|v\|_{W^{s',p}(\RN)}  \leq C_8 \|h\|_{L^m(\Omega)}.\]
\item[2)] If $m \geq \frac{N}{s}$, then $v \in W_0^{s',p}(\Omega)$ for all $ 0 < s' < s$ and all $1 \leq p < \infty$. Moreover, there exists $C_8 = C_8(m,s',p,\Omega) > 0$ such that
\[ \|v\|_{W_0^{s',p}(\Omega)}  \leq \|v\|_{W^{s',p}(\RN)}  \leq C_8 \|h\|_{L^m(\Omega)}.\]
\end{itemize}
\end{lemma}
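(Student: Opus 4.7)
The strategy is to combine the Bessel potential regularity obtained in Proposition \ref{regularity-bessel} with the embedding $L^{s,p}(\RN) \hookrightarrow W^{s',p}(\RN)$ for $s' < s$ provided by Theorem \ref{two-spaces}. In this way, each piece of Lemma \ref{regularity-2} will follow from the analogous piece of Proposition \ref{regularity-bessel}, at the price of losing an arbitrarily small amount $\epsilon = s - s'$ of smoothness---which is exactly what the statement allows, since $s'$ is only required to be strictly below $s$.

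More precisely, I would fix $s' \in (0, s)$ and set $\epsilon := s - s' \in (0, s)$. Theorem \ref{two-spaces} provides the continuous inclusion
\[ L^{s, p}(\RN) \; = \; L^{s' + \epsilon, p}(\RN) \; \hookrightarrow \; W^{s', p}(\RN), \qquad \forall\ p \in (1, \infty). \]
Proposition \ref{regularity-bessel}, 2) (in case 1) and Proposition \ref{regularity-bessel}, 3) (in case 2) give $\|v\|_{L^{s, p}(\RN)} \le C_7 \|h\|_{L^m(\Omega)}$ for every $p$ in the admissible range, so composing with the above inclusion yields $\|v\|_{W^{s', p}(\RN)} \le C \|h\|_{L^m(\Omega)}$ for every $p > 1$ in that range. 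Since $v \equiv 0$ on $\mathcal{C}\Omega$, one has $D_\Omega \subset \RN \times \RN$ and hence the trivial estimate
\[ \|v\|_{W_0^{s', p}(\Omega)} \; \le \; \|v\|_{W^{s', p}(\RN)}, \]
which closes the case $p > 1$.

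The only delicate point is the endpoint $p = 1$, since Theorem \ref{two-spaces} excludes $p = 1$. The plan here is to apply the previous argument with some auxiliary $s'' \in (s', s)$ and some $p_0 > 1$ in the admissible range to first get $v \in W_0^{s'', p_0}(\Omega)$, and then reduce to $W_0^{s', 1}(\Omega)$ via Hölder's inequality: on the $\Omega \times \Omega$ part of $D_\Omega$ the slack $s'' - s' > 0$ is exactly what makes the weight produced by Hölder integrable, while on $D_\Omega \setminus (\Omega \times \Omega)$, where either $v(x)$ or $v(y)$ vanishes, the contribution reduces to $\int_\Omega |v(x)|\, \dist(x, \partial \Omega)^{-s'} dx$ up to constants, which can be controlled by a fractional Hardy-type inequality near $\partial\Omega$ together with the $W_0^{s'',p_0}$ bound. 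The main obstacle is precisely this $p = 1$ endpoint; however, the freedom in choosing $s'$ strictly below $s$ supplies exactly the room needed for the Hölder reduction, and the rest of the proof is a direct transfer of Proposition \ref{regularity-bessel} through Theorem \ref{two-spaces}.
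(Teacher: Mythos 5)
Your treatment of the case $p>1$ coincides with the paper's: feed the Bessel potential bound of Proposition \ref{regularity-bessel} into Theorem \ref{two-spaces} with $\epsilon = s - s'$, then pass from $W^{s',p}(\RN)$ to $W_0^{s',p}(\Omega)$. That part is correct.

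For $p=1$, however, you diverge from the paper and introduce an unnecessary gap. The paper handles this endpoint in one line: Proposition \ref{regularity} already yields $v\in W_0^{s,1}(\Omega)$ with the bound $\|v\|_{W^{s,1}(\RN)}\le C\|h\|_{L^m(\Omega)}$ (take $t=s$, $p=1$ in item 3) or 4), noting that $1 < \frac{mN}{s(N-m(2s-1))}$ holds whenever $m\ge N/(2s)$), and then the elementary embedding $W_0^{s,1}(\Omega)\subset W_0^{s',1}(\Omega)$ for $s'<s$ and $\Omega$ bounded closes the case. No passage through $L^{s,p}$ spaces, H\"older reductions, or Hardy inequalities is needed. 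Your alternative route, ``embed $W_0^{s'',p_0}$ into $W_0^{s',1}$ via H\"older plus a boundary fractional Hardy inequality,'' has two problems. First, the Hardy inequality you invoke is with the weight $\dist(x,\partial\Omega)^{-ps}$, which is \emph{not} the one recorded in the paper (Theorem \ref{hardy_Frank_Seiringer} concerns $|x|^{-ps}$); you would need to import an external result and verify the nondegeneracy condition $s''p_0\neq 1$. Second, even the need for a Hardy inequality comes from an avoidable misstep: by splitting $|x(x)-u(y)|$ into $|u(x)|$ and $|u(y)|$ on $D_\Omega\setminus(\Omega\times\Omega)$ you generate the boundary singularity $\dist(x,\partial\Omega)^{-s'}$, whereas keeping $|u(x)-u(y)|$ intact and instead splitting the kernel into the regions $|x-y|<1$ and $|x-y|\ge1$ controls everything with a plain H\"older using the $s''-s'$ slack, plus the trivial tail bound $\int_{|z|\ge1}|z|^{-N-s'}dz<\infty$. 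In short: the $p=1$ case has a direct one-step proof already available in the paper via Proposition \ref{regularity}, and your proposed workaround is both more complicated and, as written, relies on an inequality the paper does not provide.
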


\begin{proof}
First observe that without loss of generality we can assume that $p > 1$. For $p = 1$ the result follows from Proposition \ref{regularity} and the continuous embedding $W^{s,p}(\RN) \subset W^{s',p}(\RN)$. We consider then two cases:
\medbreak
\noindent 1)  $\frac{N}{2s}<m<\frac{N}{s}$
\medbreak
Let $v$ be the unique weak solution to \eqref{linearEq}. By Proposition \ref{regularity-bessel}, 2) we know that $v\in L^{s,p}(\RN)$ for all $1 \leq p \leq \frac{mN}{N-ms}$. Thus, by Theorem \ref{two-spaces} we conclude that
$v \in W_0^{s',p}(\Omega)$ for all $0 < s'<s$ and
\[ \|v\|_{W_0^{s',p}(\Omega)} \leq C\|v\|_{L^{s,p}(\RN)}\le C_8 \|h\|_{L^m(\Omega)}.\]
\bigbreak
\noindent 2) $m \geq \frac{N}{s}$
\medbreak
Let $v$ be the unique weak solution to \eqref{linearEq}. In this case, by Proposition \ref{regularity-bessel},  we know that  $v\in L^{s,p}(\RN)$  for all $1 \leq p < \infty$. Hence, by Theorem \ref{two-spaces}, we conclude.
\end{proof}

Having at hand Lemmas  \ref{adams-interpolation} and \ref{regularity-2}, we prove Propositions \ref{corollary regularity-2} and \ref{corollary regularity-3}.

\begin{proof}[\textbf{Proof of Proposition \ref{corollary regularity-2}}]
First observe that, since $t \in (0,s)$ it follows that $\frac{N}{2s-t} < \frac{N}{s}$. Then we split into two cases:
\medbreak
\noindent 1) $\frac{N}{2s} \leq m < \frac{N}{2s-t}$.
\medbreak
Let $v$ be the unique weak solution to \eqref{linearEq}. On the one hand, by Lemma \ref{regularity-2}, 1), we have that $v \in W^{s',p_1}(\RN)$ for all $0 < s' < s$ and all $1 \leq p_1 \leq \frac{mN}{N-ms}$ and that there exists $C_8 > 0$ such that
\begin{equation}
\|v\|_{W_0^{s',p_1}(\Omega)} \leq \|v\|_{W^{s',p_1}(\RN)}  \leq C_8 \|h\|_{L^m(\Omega)}.
\end{equation}
On the other hand, by Lemma \ref{adams-interpolation}, we have that $v \in W^{\eta,q_1}(\RN)$ for all $0 < \eta < s'<s$ and all $1 \leq q_1 \leq \frac{N p_1}{N-p_1(s'-\eta)} \leq \frac{mN}{N-m(s+s'-\eta)}$. Moreover, there exists $C > 0$ such that
\begin{equation}\label{cor-regu2-eq1}
\|v\|_{W^{\eta,q_1}(\RN)} \leq C \|v\|_{W^{s',p_1}(\RN)} \leq C\,C_8 \|h\|_{L^m(\Omega)}.
\end{equation}
We fix then $p \in [1, \frac{mN}{N-m(2s-t)})$ and observe that we can find $\overline{\eta} \in (t,s')$ such that
\[ 1 \leq p \leq \frac{mN}{N-m(s+s'-\overline{\eta})}.\]
The result follows from \eqref{cor-regu2-eq1} using the continuous embedding $W^{\overline{\eta},p}(\RN) \subset W^{t,p}(\RN)$.
\medbreak
\noindent 2) $m \geq \frac{N}{2s-t}$.
\medbreak
The result follows arguing as in the proof of Proposition \ref{regularity-bessel}, 3) using Proposition \ref{corollary regularity-2}, 1).
\end{proof}

\begin{proof} [\textbf{Proof of Proposition \ref{corollary regularity-3}}]
On the one hand, by Lemma \ref{regularity-2}, 1), we have that
\begin{equation} \label{c310-1}
\|v\|_{W_0^{s',p_1}(\Omega)} \leq \|v\|_{W^{s',p_1}(\RN)}  \leq C_8 \|h\|_{L^m(\Omega)}, \quad \forall\ 0 < s'<s,\ \forall\  1 \leq p_1 \leq \frac{mN}{N-ms}.
\end{equation}
On the other hand, by Lemma \ref{AP gradient regularity}, 2), it follows that
\begin{equation} \label{c310-2}
\|v\|_{W^{1,p_2}(\RN)} \leq C_6 \|h\|_{L^m(\Omega)}, \quad \forall\ 1 \leq p_2 \leq \frac{mN}{N-m(2s-1)}.
\end{equation}
Also, by Lemma \ref{BM interpolation} applied with $\eta = t$, $s_1 = s'$ and $s_2 = 1$, we know that
\begin{equation} \label{c310-3}
\|v\|_{W^{t,p'}(\RN)} \leq C \|v\|_{W^{s',p_1}(\RN)}^{\frac{1-t}{1-s'}} \|v\|_{W^{1,p_2}(\RN)}^{\frac{t-s'}{1-s'}},
\end{equation}
with
\[ \frac{1}{p'} = \frac{1}{1-s'} \left( \frac{1-t}{p_1} + \frac{t-s'}{p_2} \right).\]
We fix then an arbitrary $1 \leq p < \frac{mN}{N-m(2s-t)}$ and observe that we can choose $s' < s$ such that $p' = p$. Hence, the result follows from \eqref{c310-1}-\eqref{c310-3}.
\end{proof}

\subsection{Proofs of Proposition \ref{regu-grad1} and Corollary \ref{corollary regu-grad1}} \label{3.2} $ $

\medbreak

Next, using again Lemma \ref{AP gradient regularity} but with a different approach, we prove Proposition \ref{regu-grad1}. As a consequence we will obtain Corollary \ref{corollary regu-grad1}.

\begin{proof}[\textbf{Proof of Proposition \ref{regu-grad1}}]
Let $v$ be the unique weak solution to \eqref{linearEq} and define,  for $x \in \RN$ arbitrary,
\begin{equation*}
\begin{aligned}
S_1:= \{y \in \RN:\, \dist(y,\Omega) > 2 \} \quad \textup{ and } \quad
S_2:= \{y \in \RN:\, \dist(y,\Omega) \leq 2 \textup{ and } |x-y| \geq  1\}.
\end{aligned}
\end{equation*}
Then, observe that, for all $x \in \Omega$,
\begin{equation} \label{I0}
\begin{aligned}
|(-\Delta)^{\frac{t}{2}}v(x)| & \leq \, \int_{\RN} \frac{|v(x)-v(y)|}{|x-y|^{N+t}} dy \\
& \leq \int_{S_1}  \frac{|v(x)-v(y)|}{|x-y|^{N+t}} dy + \int_{S_2}  \frac{|v(x)-v(y)|}{|x-y|^{N+t}} dy + \int_{B_1(x)}  \frac{|v(x)-v(y)|}{|x-y|^{N+t}} dy \\
& =: I_1(x) + I_2(x) + I_3(x).
\end{aligned}
\end{equation}
Now, let us estimate each one of the three terms. First observe that
\begin{equation} \label{I1}
\begin{aligned}
I_1(x) & = \int_{S_1} \frac{|v(x)|}{|x-y|^{N+t}} dy \leq \int_{S_1} \frac{|v(x)|}{\dist(y,\Omega)^{N+t}} dy \\
& = |v(x)| \int_{S_1} \frac{dy}{\dist(y,\Omega)^{N+t}} dy = c_1(N,t,\Omega)|v(x)|, \qquad \forall\ x \in \Omega.
\end{aligned}
\end{equation}
Next, using that $\Omega$ is a bounded domain and the triangular inequality, we deduce that
\begin{equation}\label{I2}
I_2(x) \leq \int_{S_2} |v(x)-v(y)| dy \leq c_2(\Omega)|v(x)| + \|v\|_{L^1(\Omega)}, \qquad \forall\ x \in \Omega.
\end{equation} 
Finally, following the arguments of \cite[Proposition 2.2]{DN_P_V_2012}, we deduce that
\begin{equation} \label{I3}
\begin{aligned}
I_3(x) & = \int_{B_1(0)} \frac{|v(x)-v(x+z)|}{|z|} \frac{1}{|z|^{N+t-1}}dz = \int_{B_1(0)} \int_0^1 \frac{|\gradv (x+\tau z)|}{|z|^{N+t-1}} d\tau dz \\
& \leq  \int_0^1 \int_{\RN} \frac{|\nabla v(w)|}{|w-x|^{N+t-1}} \tau^{t-1} dw d\tau = \left( \int_{\RN} \frac{|\nabla v(w)|}{|w-x|^{N+t-1}} dw \right) \left( \int_0^1 \tau^{t-1} d\tau \right) \\
& = \frac{1}{t} \int_{\RN}  \frac{|\nabla v(w)|}{|w-x|^{N+t-1}} dw, \qquad \forall\ x \in \Omega.
\end{aligned}
\end{equation}
From \eqref{I0}-\eqref{I3}, we deduce that
\begin{equation} \label{I4}
|(-\Delta)^{\frac{t}{2}}v(x)|\le c(s,t,\O)\bigg(|v(x)|+\int_{\RN}\frac{|\n v(w)|}{|w-x|^{N+t-1}} dw+\|v\|_{L^1(\O)}\bigg), \qquad \forall\ x \in \Omega,
\end{equation}
and so, exploiting again the fact that $\Omega$ is a bounded domain and using the H\"older and triangular inequalities, for all $1 \leq p < \infty$, we obtain that
\begin{equation}\label{I5}
\|(-\Delta)^{\frac{t}{2}} v \|_{L^p(\Omega)} \leq c_2(s,t,\Omega) \left( \|v\|_{L^p(\Omega)} + \left\| \int_{\RN}\frac{|\n v(w)|}{|w-x|^{N+t-1}} dw \right\|_{L^p(\RN)} \right).
\end{equation}
Now, let us split the proof into three parts:
\medbreak
\noindent 1) $m = 1$.
\medbreak
By Lemma \ref{AP gradient regularity}, 1), we know that
$v \in W_0^{1,\sigma}(\Omega)$ for all $1 \leq \sigma < \frac{N}{N-(2s-1)}$ and there exists $C_6 = C_6(s,\sigma,\Omega) > 0$ such that
\[ \|\gradv\|_{L^{\sigma}(\RN)} \leq C_6 \|h\|_{L^1(\Omega)}, \qquad \forall\ 1 \leq \sigma < \frac{N}{N-(2s-1)}.\]
Thus, applying Lemma \ref{Stein}, we deduce that
\begin{equation}\label{I6}
\left\|  \int_{\RN}  \frac{|\nabla v(w)|}{|w-x|^{N+t-1}} dw \right\|_{L^{\ell}(\RN)} \leq C\,C_6 \|h\|_{L^1(\Omega)}, \quad \forall\ 1 \leq \ell < \frac{N}{N-(2s-t)}.
\end{equation}
Also, by Lemma \ref{LPPS weak solution regularity}, 1), we know that
\begin{equation}\label{I7}
\|v\|_{L^{\gamma}(\Omega)} \leq C_5 \|h\|_{L^{1}(\Omega)}, \quad \forall\ 1 \leq \gamma < \frac{N}{N-2s}.
\end{equation}
Taking into account \eqref{I6}-\eqref{I7}, the result follows from \eqref{I5}.
\medbreak
\noindent 2) $1 < m < \frac{N}{2s-t}$.
\medbreak
Observe that, by Lemma \ref{AP gradient regularity}, 2), it follows that $v \in W_0^{1,\sigma}(\Omega)$ for all $1 \leq \sigma \leq \frac{mN}{N-m(2s-1)}$ and there exists $C_6 = C_6(m,s,\sigma,\Omega) > 0$ such that
\[ \|\gradv\|_{L^{\sigma}(\RN)} \leq C_6 \|h\|_{L^m(\Omega)}, \qquad \forall\ 1 \leq \sigma \leq \frac{mN}{N-m(2s-1)}.\]
Hence, by Lemma \ref{Stein}, we deduce that
\begin{equation}\label{I6-2}
\left\|  \int_{\RN}  \frac{|\nabla v(w)|}{|w-x|^{N+t-1}} dw \right\|_{L^{\ell}(\RN)} \leq C\,C_6 \|h\|_{L^m(\Omega)}, \quad \forall\ 1 \leq \ell \leq \frac{mN}{N-m(2s-t)}.
\end{equation}
Also, by Lemma \ref{LPPS weak solution regularity}, 2) and 3) we know that
\begin{equation}\label{I7-2}
\|v\|_{L^{\gamma}(\Omega)} \leq C_5 \|h\|_{L^{m}(\Omega)}, \quad \forall\ 1 \leq \gamma < \frac{mN}{N-2ms}, \quad \textup{ if } \quad 1 \leq m < \frac{N}{2s}
\end{equation}
and
\begin{equation} \label{I8-2}
\|v\|_{L^{\gamma}(\Omega)} \leq C_5 \|h\|_{L^m(\Omega)}, \quad \forall\ 1 \leq \gamma < \infty, \quad \textup{ if } \quad \frac{N}{2s} \leq m < \frac{N}{2s-1}.
\end{equation}
Taking into account \eqref{I6-2}-\eqref{I8-2}, the result follows from \eqref{I5}.
\medbreak
\noindent 3) $m \geq \frac{N}{2s-t}$.
\medbreak
The result follows arguing as in the proof of Proposition \ref{regularity-bessel}, 3) using Proposition \ref{regu-grad1}, 2).
\end{proof}

\begin{proof}[\textbf{Proof of Corollary \ref{corollary regu-grad1}}]
By \cite[Lemma 15.9]{Ponce_Book_2015} we know that
\begin{equation}\label{fract-grad}
\n^s u(x)=\frac{1}{N-(1-s)}\irn \frac{\n u(y)}{|x-y|^{N+s-1)}}dy.
\end{equation}
Hence, we have that
\begin{equation} \label{ineq-fract-grad}
|\n^s u(x)|\le C \irn \frac{|\nabla u(y)|}{|x-y|^{N+s-1}}dy.
\end{equation}
The result then follows arguing as in the proof of Proposition \ref{regu-grad1}.
\end{proof}

\subsection{Convergence and compactness}\label{3.3} $ $
\medbreak
We end this section presenting a result of convergence and one of compactness for the fractional Poisson equation \eqref{linearEq}. They will be used in the proofs of Theorems \ref{main th existence}, \ref{main th existence map1}, \ref{main th Qlambda} and \ref{maint th Rlambda}.

\begin{prop} \label{convergence}
Let $\Omega \subset \RN$, $N \geq 2$, be a bounded domain with $\partial \Omega$ of class $\mathcal{C}^2$, let $s \in (1/2,1)$, let $\{h_n\} \subset L^1(\Omega)$ be a sequence such that $h_n \to h$ in $L^1(\Omega)$ and let $v_n$ be the unique weak solution to
\begin{equation*}
\left\{
\begin{aligned}
\sLap v_n & = h_n(x), \quad &  \textup{ in } \Omega,\\
v_n & = 0, & \textup{ in } \RN \setminus \Omega,
\end{aligned}
\right.
\end{equation*}
for all $n \in \N$, and $v$ be the unique weak solution to
\begin{equation*}
\left\{
\begin{aligned}
\sLap v & = h(x), \quad &  \textup{ in } \Omega,\\
v & = 0, & \textup{ in } \RN \setminus \Omega.
\end{aligned}
\right.
\end{equation*}
Then $v_n \to v$ in $W_0^{s,p}(\Omega)$ for all $1 \leq p < \frac{N}{N-s}$. 
\end{prop}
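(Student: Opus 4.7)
The plan is to reduce the convergence statement to a quantitative regularity estimate by exploiting the linearity of the fractional Laplacian. Indeed, the fractional Poisson equation \eqref{linearEq} is linear, and each $v_n$ as well as $v$ is its unique weak solution (in the sense recalled in Section \ref{3}, whose uniqueness is guaranteed by Lemma \ref{LPPS weak solution regularity}). Consequently, the difference $w_n := v_n - v$ is the unique weak solution to
\begin{equation*}
\left\{
\begin{aligned}
\sLap w_n & = h_n(x) - h(x), \quad &  \textup{ in } \Omega,\\
w_n & = 0, & \textup{ in } \RN \setminus \Omega.
\end{aligned}
\right.
\end{equation*}

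Next, the key step is to apply Proposition \ref{regularity}, part 1), with $t = s$ and datum $h_n - h \in L^1(\Omega)$. This yields the estimate
\[ \|w_n\|_{W_0^{s,p}(\Omega)} \leq C_1 \|h_n - h\|_{L^1(\Omega)}, \quad \forall\ 1 \leq p < \frac{N}{N-s}, \]
with a constant $C_1 = C_1(s,p,\Omega) > 0$ independent of $n$. Since by hypothesis $h_n \to h$ in $L^1(\Omega)$, the right-hand side tends to zero as $n \to \infty$, from which the claimed convergence $v_n \to v$ in $W_0^{s,p}(\Omega)$ follows immediately.

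There is no real obstacle here: the statement is essentially a direct corollary of the Calder\'on-Zygmund type regularity result already proved in Proposition \ref{regularity}. The only thing to verify, which is routine, is that the difference of two weak solutions in the sense of the definition given at the start of Section \ref{3} is again a weak solution with the difference of the data as right-hand side; this is immediate from the linearity of $\sLap$ acting on the test functions $\phi \in \mathbb{X}_s$ and from the linearity of the integral.
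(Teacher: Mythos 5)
Your proof is correct and follows essentially the same route as the paper: both define $w_n = v_n - v$, note that by linearity $w_n$ solves the fractional Poisson equation with datum $h_n - h$, and conclude by applying Proposition \ref{regularity} with $m = 1$ and $t = s$ to get $\|w_n\|_{W_0^{s,p}(\Omega)} \leq C \|h_n - h\|_{L^1(\Omega)}$. The only difference is cosmetic: you label the constant $C_1$, consistent with the statement of Proposition \ref{regularity}, whereas the paper's proof writes $C_3$ there (a minor inconsistency in the paper).
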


\begin{proof}
First of all observe that the existence of $v_n$ and $v$ are insured by Lemma \ref{LPPS weak solution regularity}. Now, let us define $w_n = v_n - v$ and observe that $w_n$ satisfies
\begin{equation*}
\left\{
\begin{aligned}
\sLap w_n & = h_n(x)-h(x), \quad &  \textup{ in } \Omega,\\
w_n & = 0, & \textup{ in } \RN \setminus \Omega.
\end{aligned}
\right.
\end{equation*}
Applying Proposition \ref{regularity} with $m = 1$, it follows that
\[ \|w_n\|_{W_0^{s,p}(\Omega)} \leq C_3 \|h_n - h\|_{L^1(\Omega)}, \quad \forall\ 1 \leq p < \frac{N}{N-s}.\]
Hence, since $h_n \to h$ in $L^1(\Omega)$, it follows that $w_n \to 0$ in $W_0^{s,p}(\Omega)$ for all $1 \leq p < \frac{N}{N-s}$ and so, that $v_n \to v$ in $W_0^{s,p}(\Omega)$ for all $1 \leq p < \frac{N}{N-s}$, as desired.
\end{proof}

\begin{prop} \label{compactness}
Let $\Omega \subset \RN$, $N \geq 2$, be a bounded domain with $\partial \Omega$ of class $\mathcal{C}^2$, let $s \in (1/2,1)$ and let $h \in L^1(\Omega)$. Then the operator $\mathcal{S}: L^1(\Omega) \to W_0^{s,p}(\Omega)$ given by $\mathcal{S}(h) = v$ with $v$ the unique weak solution to \eqref{linearEq} is compact for all $1 \leq p < \frac{N}{N-s}$.
\end{prop}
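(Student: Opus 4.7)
The plan is to exploit the gain in differentiability provided by Proposition~\ref{regularity} together with the Bourgain--Mironescu interpolation inequality of Lemma~\ref{BM interpolation}. Let $\{h_n\} \subset L^1(\Omega)$ be a bounded sequence with $\|h_n\|_{L^1(\Omega)} \leq M$, and set $v_n := \mathcal{S}(h_n)$. The objective is to extract a subsequence of $\{v_n\}$ converging in $W_0^{s,p}(\Omega)$. Fix $p \in [1, N/(N-s))$ arbitrary. First I would choose $\epsilon > 0$ small enough so that $t := s + \epsilon \in (s,1)$ and
\[ p < \frac{N}{N-(2s-t)} = \frac{N}{N-s+\epsilon};\]
this is possible because the right-hand side tends to $N/(N-s)$ as $\epsilon \to 0^{+}$.

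With this choice, Proposition~\ref{regularity}, part~1, applied at the higher differentiability level $t$, yields the uniform estimate
\[ \|v_n\|_{W^{t,p}(\RN)} \leq C_1 \|h_n\|_{L^1(\Omega)} \leq C_1 M.\]
Next, by the fractional Rellich--Kondrachov theorem (see e.g.\ \cite[Theorem~7.1]{DN_P_V_2012}), the embedding $W^{t,p}(\Omega) \hookrightarrow L^p(\Omega)$ is compact. Extending by zero outside $\Omega$, this allows one to extract a (non-relabeled) subsequence with $v_n \to v_{\ast}$ in $L^p(\RN)$ for some $v_{\ast} \in L^p(\RN)$ supported in $\overline{\Omega}$.

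To upgrade $L^p$ convergence to $W^{s,p}$ convergence I would apply Lemma~\ref{BM interpolation} with the choices $s_1 = 0$, $s_2 = t$, $p_1 = p_2 = p$, $\eta = s$. Relations \eqref{BM 2018 1.3} are then satisfied with $\theta = \epsilon/(s+\epsilon) \in (0,1)$, while condition \eqref{BM 2018 1.9} fails because $s_2 = s+\epsilon < 1$. This yields
\[ \|v_n - v_m\|_{W^{s,p}(\RN)} \leq C\,\|v_n-v_m\|_{L^p(\RN)}^{\theta}\,\|v_n-v_m\|_{W^{t,p}(\RN)}^{1-\theta} \leq C\,(2C_1 M)^{1-\theta}\,\|v_n-v_m\|_{L^p(\RN)}^{\theta},\]
whose right-hand side vanishes as $n,m \to \infty$. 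Hence $\{v_n\}$ is Cauchy in $W^{s,p}(\RN)$, and therefore converges in $W_0^{s,p}(\Omega)$, establishing the compactness of $\mathcal{S}$.

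The only genuine subtlety is making sure the extra smoothness $t > s$ afforded by Proposition~\ref{regularity} is compatible with the prescribed integrability $p < N/(N-s)$; this forces $\epsilon$ to be small but leaves a bona fide range of admissible values, and all exponents stay safely inside the regime in which Lemma~\ref{BM interpolation} applies (the degenerate case $s_2 = p_2 = 1$ is excluded). Everything else reduces to routine use of results already collected in Sections~\ref{2} and~\ref{3}.
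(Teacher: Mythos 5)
Your proof is correct, but it takes a genuinely different route from the paper's. The paper starts from the compactness of $\mathcal{S}\colon L^1(\Omega)\to W_0^{1,\theta}(\Omega)$ (borrowed from \cite[Proposition~2.4]{C_V_2014-JFA}), passes to $L^\sigma$ compactness via Sobolev embedding, and then interpolates with Lemma~\ref{BM interpolation} between $L^\sigma$ and $W^{1,\theta}$, i.e.\ with $s_1=0$, $s_2=1$ and two distinct integrability exponents $\sigma,\theta$ tuned so that $1/p=(1-s)/\sigma+s/\theta$. You instead exploit the gain of fractional differentiability in Proposition~\ref{regularity} directly: pick $t=s+\epsilon>s$ with $\epsilon$ small enough to keep $p<\frac{N}{N-(2s-t)}$, get a uniform $W^{t,p}$ bound, compactness in $L^p$ from the fractional Rellich--Kondrachov theorem, and then interpolate between $L^p$ and $W^{t,p}$ with $p_1=p_2=p$ and $s_2=t<1$. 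Your choice of parameters is legitimate (\eqref{BM 2018 1.3} holds with $\theta=\epsilon/(s+\epsilon)\in(0,1)$ and \eqref{BM 2018 1.9} fails since $s_2<1$), and passing from $W^{s,p}(\RN)$ to $W_0^{s,p}(\Omega)$ is harmless because the functions vanish outside $\Omega$. The tradeoff: the paper's argument avoids invoking fractional compactness but depends on the external $W^{1,\theta}$-compactness of $\mathcal{S}$ and has to juggle two integrability exponents; yours keeps a single exponent $p$ throughout and only relies on Proposition~\ref{regularity} plus the standard Rellich--Kondrachov theorem, at the mild cost of an $\epsilon$-dependent constant.
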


\begin{proof}
Let $\{f_n\} \subset L^1(\Omega)$  be a bounded sequence. By \cite[Proposition 2.4]{C_V_2014-JFA} we know that $\mathcal{S}$ is a compact operator from $L^1(\Omega)$ to $W_0^{1,p_1}(\Omega)$ for all $1 \leq \theta < \frac{N}{N-(2s-1)}$. Hence, for all $1 \leq \theta < \frac{N}{N-(2s-1)}$, up to a subsequence we have that $\mathcal{S}(f_n) \to v$ for some $v \in W_0^{1,\theta}(\Omega)$. By Sobolev inequality, this implies, for all $ 1 \leq \sigma < \frac{N}{N-2s}$, that $\mathcal{S}(f_n) \to v$ in $L^{\sigma}(\Omega)$ and $v \in L^{\sigma}(\Omega)$.
\medbreak
Now, applying Lemma \ref{BM interpolation} with $\eta = s$, $s_1 = 0$ and $s_2 =1$, we obtain that
\begin{equation} \label{compactness1}
\begin{aligned}
\|\mathcal{S}(f_n) - v\|_{W_0^{s,p}(\Omega)} & \leq C \|\mathcal{S}(f_n)-v\|_{L^{\sigma}(\RN)}^{1-s}\|\mathcal{S}(f_n)-v\|_{W^{1,\theta}(\RN)}^{s} \\
& = C \|\mathcal{S}(f_n)-v\|_{L^{\sigma}(\Omega)}^{1-s} \|\mathcal{S}(f_n)-v\|_{W_0^{1,\theta}(\Omega)}^s,
\end{aligned}
\end{equation}
for $p$ satisfying
\begin{equation}
\frac{1}{p} = \frac{1-s}{\sigma} + \frac{s}{\theta}.
\end{equation}
Hence, the result follows from \eqref{compactness1} using that
\[ 1 \geq \frac{1}{p} > \frac{N-s}{N}.\]
\end{proof}

\section{Proofs of Theorems \ref{main th existence} and \ref{main th existence map1}} \label{4}
This section is devoted to prove Theorems \ref{main th existence} and \ref{main th existence map1}. As indicated in the introduction, once we have the regularity results of Section \ref{3}, we follow the approach first develop in \cite[Section 6]{P_2014}. Let us begin with two elementary technical lemmas that will be useful in the proofs of both theorems.

\begin{lemma} \label{Lemma g}
Let $a, b > 0$, $p > 1$ and  $c^{\ast} := \frac{p-1}{p} \left( \frac{1}{pa^pb} \right)^{\frac{1}{p-1}}.$
Then, the function $g: [0,\infty) \to \R$ given by
\[ g(t) = a^p (bt + c^{\ast})^p - t,\]
has exactly one root $t^{\ast} \in (0,\infty)$. 
\end{lemma}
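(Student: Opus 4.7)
The plan is to show that $g$ is strictly convex on $[0,\infty)$ with a unique critical point, and that the specific value of $c^{\ast}$ forces the minimum of $g$ to be exactly zero, so the critical point is the unique root.

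First I would compute the derivative
\[
g'(t) = pa^{p}b(bt+c^{\ast})^{p-1} - 1
\]
and observe that, setting $C := \bigl(\tfrac{1}{pa^{p}b}\bigr)^{1/(p-1)}$, the equation $g'(t)=0$ is equivalent to $bt+c^{\ast}=C$. Since $c^{\ast} = \tfrac{p-1}{p}C < C$, this has a unique solution $t_{0} = (C-c^{\ast})/b = C/(pb) > 0$. Next I would note that
\[
g''(t) = p(p-1)a^{p}b^{2}(bt+c^{\ast})^{p-2} > 0 \quad \text{for every } t \geq 0,
\]
because $p>1$ and $bt+c^{\ast}>0$. Hence $g$ is strictly convex on $[0,\infty)$ and $t_{0}$ is its unique global minimum.

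The key step is evaluating $g(t_{0})$. Using $bt_{0}+c^{\ast}=C$ and the defining relation $C^{p-1}=\tfrac{1}{pa^{p}b}$, one computes
\[
a^{p}C^{p} = a^{p}C \cdot C^{p-1} = \frac{C}{pb},
\]
while $t_{0} = C/(pb)$ as well; thus $g(t_{0}) = a^{p}C^{p} - t_{0} = 0$. Combined with $g(0) = a^{p}(c^{\ast})^{p} > 0$ and $g(t) \to +\infty$ as $t \to \infty$ (since $p>1$), strict convexity then yields $g(t) > 0$ for every $t \neq t_{0}$ in $[0,\infty)$. Therefore $t^{\ast} := t_{0} = \tfrac{1}{pb}\bigl(\tfrac{1}{pa^{p}b}\bigr)^{1/(p-1)}$ is the unique root of $g$ in $(0,\infty)$.

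I do not expect a serious obstacle here: the whole proof is an elementary calculus argument, and the only real content is the algebraic verification that the specific choice of $c^{\ast}$ is precisely the tangency threshold that makes $\min g = 0$. The mild point to be careful about is to check that $t_{0}$ is strictly positive (which is automatic from $c^{\ast}<C$) so that the root lies in the open interval $(0,\infty)$ as claimed.
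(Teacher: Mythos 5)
Your proof is correct and follows essentially the same route as the paper's: locate the unique critical point of $g$, verify that the particular choice of $c^{\ast}$ forces $g$ to vanish there, and combine with $g(0)>0$ and $g(t)\to\infty$. The one small refinement you make is to observe $g''(t)>0$ for all $t\ge 0$ (global strict convexity), whereas the paper only checks $g''(t^{\ast})>0$; your version makes the ``unique global minimum'' step immediate rather than implicit, but the argument is the same in substance.
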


\begin{proof}
First observe that, $g'(t) = 0$ if and only if
\[ t = t^{\ast} := \frac{1}{b} \left( \frac{1}{pa^pb} \right)^{\frac{1}{p-1}} - \frac{c^{\ast}}{b} = \frac{1}{pb}  \left( \frac{1}{pa^pb} \right)^{\frac{1}{p-1}} \in (0,\infty)  .\]
Moreover, observe that
\[ g''(t^{\ast}) = (p-1)pa^pb^2 \left( \frac{1}{pa^pb} \right)^{\frac{p-2}{p-1}} > 0.\]
Thus, we deduce that $g$ has an strict global minimum on $t = t^{\ast}$. Finally, observe that
\[ g(t^{\ast})  =  a^p \left( \frac{1}{pa^pb} \right)^{\frac{p}{p-1}} - \frac{1}{b} \left( \frac{1}{pa^pb} \right)^{\frac{1}{p-1}} +  \frac{p-1}{pb} \left( \frac{1}{pa^pb} \right)^{\frac{1}{p-1}} = 0, \quad g(0) > 0 \textup{ and } \lim_{t \to \infty}g(t) = \infty.\] 
Hence, we conclude that $g$ has exactly one root $ t^{\ast} \in (0,\infty)$.
\end{proof}

\begin{lemma} \label{holder nonlocal gradient}
Let $\Omega \subset \RN$ be a bounded domain with boundary $\partial \Omega$ of class $\mathcal{C}^{0,1}$ and let $s \in (0,1)$. For all $\epsilon > 0$ satisfying $0 < s-\epsilon < s+\epsilon < 1$ and all $1 \leq \sigma < r$ there exists $C_9 = C_9(s,\epsilon,\sigma,r,\Omega) > 0$ such that
\begin{equation}
\left\| \left( \int_{\RN} \frac{|u(x)-u(y)|^{\sigma}}{|x-y|^{N+s\sigma}} dy \right)^{\frac{1}{\sigma}} \right\|_{L^r(\Omega)} \leq C_9 \|u\|_{W_0^{s+\epsilon,r}(\Omega)}\,, \quad \forall\ u \in W_0^{s+\epsilon,r}(\Omega)\,.
\end{equation}
\end{lemma}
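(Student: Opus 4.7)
The strategy is to obtain a pointwise bound for the inner integral in terms of the Gagliardo integrand at the exponent $(s+\varepsilon,r)$ plus a lower-order term involving $|u(x)|$, and then integrate in $x$, invoking a fractional Poincar\'e inequality to dispose of the lower-order term. The key technical device is H\"older's inequality with exponents $r/\sigma$ and $r/(r-\sigma)$, and the extra room $\varepsilon>0$ is what guarantees the resulting radial integral converges near the origin on a ball. Fix $R_0 > \operatorname{diam}(\Omega)$; then for every $x \in \Omega$ and $y \in \RN \setminus B_{R_0}(x)$ we have $y \notin \Omega$, hence $u(y)=0$.

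First I split, for $x \in \Omega$,
\[
\int_{\RN} \frac{|u(x)-u(y)|^{\sigma}}{|x-y|^{N+s\sigma}}\,dy
= \int_{B_{R_0}(x)} \frac{|u(x)-u(y)|^{\sigma}}{|x-y|^{N+s\sigma}}\,dy
+ \int_{\RN \setminus B_{R_0}(x)} \frac{|u(x)|^{\sigma}}{|x-y|^{N+s\sigma}}\,dy.
\]
The far piece is immediately controlled by $C(N,s,\sigma,R_0)\,|u(x)|^{\sigma}$. On the near piece, I write the integrand as
$\bigl(|u(x)-u(y)|^{\sigma}/|x-y|^{(N+(s+\varepsilon)r)\sigma/r}\bigr) \cdot |x-y|^{(N+(s+\varepsilon)r)\sigma/r - (N+s\sigma)}$
and apply H\"older with exponents $r/\sigma$ and $r/(r-\sigma)$. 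An elementary computation shows that the exponent of $|x-y|$ in the factor raised to $r/(r-\sigma)$ equals $-N + \varepsilon\sigma r/(r-\sigma)$, which is strictly greater than $-N$ thanks to $\varepsilon>0$. Hence
\[
\int_{B_{R_0}(x)} |x-y|^{-N+\varepsilon\sigma r/(r-\sigma)}\,dy \leq C(N,\varepsilon,\sigma,r)\, R_0^{\varepsilon\sigma r/(r-\sigma)},
\]
and we obtain the pointwise bound
\[
\int_{\RN} \frac{|u(x)-u(y)|^{\sigma}}{|x-y|^{N+s\sigma}}\,dy
\leq C \left( \int_{\RN} \frac{|u(x)-u(y)|^{r}}{|x-y|^{N+(s+\varepsilon)r}}\,dy \right)^{\!\sigma/r} + C\,|u(x)|^{\sigma}.
\]

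Raising both sides to the power $r/\sigma \geq 1$, using convexity $(a+b)^{r/\sigma} \leq 2^{r/\sigma-1}(a^{r/\sigma}+b^{r/\sigma})$, and integrating over $x \in \Omega$ gives
\[
\int_{\Omega} \left( \int_{\RN} \frac{|u(x)-u(y)|^{\sigma}}{|x-y|^{N+s\sigma}}\,dy \right)^{\!r/\sigma} dx
\leq C \iint_{\Omega \times \RN} \frac{|u(x)-u(y)|^{r}}{|x-y|^{N+(s+\varepsilon)r}}\,dx\,dy + C\,\|u\|_{L^r(\Omega)}^r.
\]
The double integral is bounded above by $\|u\|_{W_0^{s+\varepsilon,r}(\Omega)}^r$ since $\Omega \times \RN \subset D_{\Omega}$. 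Finally, the fractional Poincar\'e inequality for $W_0^{s+\varepsilon,r}(\Omega)$ on the bounded domain $\Omega$ (a direct consequence of the Sobolev embedding Theorem \ref{Sobolev inequality}, or of the cross-terms $\Omega \times \mathcal{C}\Omega$ present in $D_\Omega$) gives $\|u\|_{L^r(\Omega)} \leq C\,\|u\|_{W_0^{s+\varepsilon,r}(\Omega)}$. Combining the two estimates and taking the $r$-th root yields the claim.

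The only delicate point is the balancing of exponents in the H\"older step: one must verify that the resulting exponent of $|x-y|$ produces an integrable singularity on $B_{R_0}(x)$, which is precisely where the hypothesis $\varepsilon>0$ enters. Everything else is a routine assembly of standard fractional Sobolev tools already available in the paper.
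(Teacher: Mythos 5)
Your proof is correct, and the core mechanism — H\"older's inequality with exponents $r/\sigma$ and $r/(r-\sigma)$, with the $\epsilon$-slack producing an integrable radial singularity — is exactly the one the paper uses. The decomposition, however, is different. The paper splits the inner integral at radius $1$ and applies the H\"older trick on \emph{both} pieces, using the exponent $s+\epsilon$ on $\{|x-y|<1\}$ (where the factor $|x-y|^{-N+\epsilon\sigma r/(r-\sigma)}$ is integrable near the origin) and $s-\epsilon$ on $\{|x-y|\geq 1\}$ (where $|x-y|^{-N-\epsilon\sigma r/(r-\sigma)}$ is integrable at infinity), then concludes via the continuous embedding $W_0^{s+\epsilon,r}(\Omega)\subset W_0^{s-\epsilon,r}(\Omega)$. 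You instead split at $R_0>\diam(\Omega)$, so that on the tail $u(y)=0$ and the integral collapses to $C\,|u(x)|^{\sigma}$; this trades the second H\"older application for a fractional Poincar\'e inequality on $W_0^{s+\epsilon,r}(\Omega)$. A small advantage of your route is that the hypothesis $s-\epsilon>0$ is never invoked: only $\epsilon>0$ and $s+\epsilon<1$ are actually needed, together with the vanishing of $u$ outside $\Omega$. Both arguments are of comparable length and rely on standard tools; the choice between them is essentially a matter of taste.
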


\begin{proof}
First of all, observe that
\begin{equation} \label{J0}
\begin{aligned}
& \int_{\Omega}  \left( \int_{\RN} \frac{|u(x)-u(y)|^{\sigma}}{|x-y|^{N+s\sigma}} dy \right)^{\frac{r}{\sigma}} dx \\
&  \ \ = \int_{\Omega} \left( \int_{\RN \cap \{|x-y|< 1 \} } \frac{|u(x)-u(y)|^{\sigma}}{|x-y|^{N+s\sigma}} dy + \int_{\RN \cap \{ |x-y|\geq 1\} } \frac{|u(x)-u(y)|^\sigma}{|x-y|^{N+s\sigma}} dy \right)^{\frac{r}{\sigma}} dx \\
& \  \ \leq c_{r,\sigma} \left[ \int_{\Omega} \left( \int_{\RN \cap \{|x-y|< 1 \} } \frac{|u(x)-u(y)|^\sigma}{|x-y|^{N+s\sigma}} dy \right)^{\frac{r}{\sigma}} dx + \int_{\Omega} \left( \int_{\RN \cap \{|x-y| \geq 1 \} } \frac{|u(x)-u(y)|^\sigma}{|x-y|^{N+s\sigma}} dy \right)^{\frac{r}{\sigma}} dx \right] \\
& \ \ =: c_{r,\sigma} (J_1 + J_2)\,.
\end{aligned}
\end{equation}
Let us then estimate $J_1$. Applying H\"older inequality, we have that
\begin{equation*}
\begin{aligned}
J_1 & = \int_{\Omega}  \left( \int_{\RN \cap \{|x-y|< 1 \} } \frac{|u(x)-u(y)|^\sigma}{|x-y|^{\frac{N\sigma}{r} + (s+\epsilon) \sigma}} \frac{|x-y|^{\epsilon \sigma}}{|x-y|^{N - \frac{N\sigma}{r}}} dy \right)^{\frac{r}{\sigma}} dx\\
& \leq \int_{\Omega} \left( \int_{\RN \cap \{|x-y|< 1 \} } \frac{|u(x)-u(y)|^r}{|x-y|^{N+(s+\epsilon)r}} dy \right) \left( \int_{\RN \cap \{|x-y|< 1 \} }  \frac{dy}{|x-y|^{N-\frac{\epsilon \sigma r }{r-\sigma}}} \right)^{\frac{r-\sigma}{\sigma}} dx.
\end{aligned}
\end{equation*}
Furthermore, since
\[ \int_{\RN \cap \{|x-y|< 1 \} }  \frac{dy}{|x-y|^{N-\frac{\epsilon \sigma r }{r-\sigma}}} = \int_{B_1(0)} \frac{dz}{|z|^{N- \frac{\epsilon \sigma r }{r-\sigma}}} = C_{J_1}(\epsilon,\sigma,r) < \infty,\]
we deduce that
\begin{equation}\label{J1}
 J_1 \leq \widetilde{C}_{J_1} \int_{\Omega} \left( \int_{\RN \cap \{|x-y|< 1 \} } \frac{|u(x)-u(y)|^r}{|x-y|^{N+(s+\epsilon)r}} dy \right) dx \leq \widetilde{C}_{J_1} \|u\|_{W_0^{s+\epsilon,r}(\Omega)}^{r}  .
\end{equation}
Now, arguing as with $J_1$, we obtain that
\begin{equation*}
\begin{aligned}
J_2 & = \int_{\Omega}  \left( \int_{\RN \cap \{|x-y| \geq 1 \} } \frac{|u(x)-u(y)|^\sigma}{|x-y|^{\frac{N\sigma}{r} + (s-\epsilon) \sigma}} \frac{dy}{|x-y|^{N - \frac{N\sigma}{r} + \epsilon \sigma}} \right)^{\frac{r}{\sigma}} dx\\
& \leq \int_{\Omega} \left( \int_{\RN \cap \{|x-y|\geq 1 \} } \frac{|u(x)-u(y)|^r}{|x-y|^{N+(s-\epsilon)r}} dy \right) \left( \int_{\RN \cap \{|x-y| \geq 1 \} }  \frac{dy}{|x-y|^{N+\frac{\epsilon \sigma r }{r-\sigma}}} \right)^{\frac{r-\sigma}{\sigma}} dx.
\end{aligned}
\end{equation*}

\noindent Hence, since
\[ \int_{\RN \cap \{|x-y| \geq 1 \} }  \frac{dy}{|x-y|^{N+\frac{\epsilon \sigma r }{r-\sigma}}} = \int_{\RN \setminus B_1(0)} \frac{dz}{|z|^{N+ \frac{\epsilon \sigma r }{r-\sigma}}} = C_{J_2}(\epsilon,\sigma,r) < \infty,\]
and $W_{0}^{s+\epsilon,r}(\Omega) \subset W_{0}^{s-\epsilon,r}(\Omega)$, it follows that
\begin{equation} \label{J2}
J_2 \leq \widetilde{C}_{J_2} \int_{\Omega} \left( \int_{\RN \cap \{|x-y| \geq 1 \} } \frac{|u(x)-u(y)|^r}{|x-y|^{N+(s-\epsilon)r}} dy \right) dx \leq \widetilde{C}_{J_2} \|u\|_{W_0^{s-\epsilon,r}(\Omega)}^{r} \leq  \overline{C}_{J_2} \|u\|_{W_0^{s+\epsilon,r}(\Omega)}^{r}.
\end{equation}
The result follows from \eqref{J0}, \eqref{J1} and \eqref{J2}.
\end{proof}

\subsection{Proof of Theorem \ref{main th existence}} \label{4.1} $ $
\medbreak
Let us begin recalling that, under the assumption \eqref{A1}, $f \in L^m(\Omega)$ for some $m > \frac{N}{2s}$. Hence, since we are working in a bounded domain, without loss of generality, we can assume that $m \in (\frac{N}{2s}, \frac{N}{2s-1})$. Moreover, observe that, for $\lambda f \equiv 0$, $u \equiv 0$ is a solution to \eqref{Plambda} and, for $\mu \equiv 0$, \eqref{Plambda} reduces to \eqref{linearEq}. Hence, we may assume that $\|\mu\|_{\infty} \neq 0$ and $\|f\|_{L^m(\Omega)} \neq 0$.
\medbreak
Next, we fix some notation that we use throughout this subsection. First, we fix $r = r(m,s) > 0$ such that
\[ 1 < 2m < r < \frac{mN}{s(N-m(2s-1))},\]
and $\epsilon = \epsilon(r,m,s) > 0$ such that
\[ 1 < r < \frac{mN}{(s+\epsilon)(N-m(2s-1))} <  \frac{mN}{s(N-m(2s-1))}, \quad s+\epsilon < 1, \quad \textup{ and } s-\epsilon > \frac{1}{2}.\]
Also, we introduce and fix the constants $C_1$, given by Proposition \ref{corollary regularity}, 3) applied with $t = s+\epsilon$ and $p =r$, $C_{10}:= C_9^2 |\Omega|^{\frac{r-2m}{rm}}$, where $C_9$ is the constant given by Lemma \ref{holder nonlocal gradient}, and
\[ \lambda^{\ast} := \frac{1}{4\|f\|_{L^m(\Omega)} C_1^2 C_{10} \|\mu\|_{\infty}}.\]
By the definition of $\lambda^{\ast}$ and Lemma \ref{Lemma g}, we know there exists and unique $l \in (0,\infty)$ such that
\begin{equation} \label{eq l}
 C_1 (C_{10} \|\mu\|_{\infty} l + \lambda^{\ast} \|f\|_{L^m(\Omega)} ) = l^{\frac{1}{2}}.
\end{equation}
\bigbreak
Having fixed the above constants, we introduce
\[ E:= \left\{ v \in W_0^{s,1}(\Omega) : \iint_{D_{\Omega}} \frac{|u(x)-u(y)|^{r}}{|x-y|^{N+(s+\epsilon)r}} dx dy \leq l^{\frac{r}{2}} \right\}, \]
which is a closed convex set of $W_0^{s,1}(\Omega)$. Then, we define $T: E \to W_0^{s,1}(\Omega)$ by $T(\varphi) = u$, where $u$ is the weak solution to
\begin{equation} \label{eq fixed point}
\left\{
\begin{aligned}
(-\Delta)^s u & = \mu(x)\, \mathbb{D}_s^2(\varphi) + \lambda f(x)\,, & \quad \textup{ in } \Omega,\\
u & = 0\,, & \quad \textup{ in } \RN \setminus \Omega,
\end{aligned}
\right.
\end{equation}
and observe that problem \eqref{Plambda} is equivalent to the fixed point problem $u = T(u)$. Hence, to prove Theorem \ref{main th existence}, we shall show that $T$ has fixed point belonging to $W_0^{s,2}(\Omega) \cap \mathcal{C}^{0,\alpha}(\Omega)$ for some $\alpha > 0$.

\begin{lemma} \label{T well defined}
Assume that \eqref{A1} holds. Then $T$ is well defined.
\end{lemma}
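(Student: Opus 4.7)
The plan is to verify that for each $\varphi \in E$ the right-hand side $h_\varphi := \mu(x)\,\mathbb{D}_s^2(\varphi) + \lambda f(x)$ lies in $L^m(\Omega)$, so that the fractional Poisson equation \eqref{eq fixed point} admits a unique weak solution $u$, and then to confirm that this $u$ belongs to $W_0^{s,1}(\Omega)$.

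The first step is to control $\mathbb{D}_s^2(\varphi)$ in a Lebesgue space strictly finer than $L^m$. I would apply Lemma \ref{holder nonlocal gradient} with $\sigma = 2$, exploiting the conditions $2 < 2m < r$ and $0 < s-\epsilon < s+\epsilon < 1$ that were fixed at the beginning of the subsection. Since $\varphi \in E$ is equivalent to $\|\varphi\|_{W_0^{s+\epsilon,r}(\Omega)} \leq l^{1/2}$, Lemma \ref{holder nonlocal gradient} yields
\[
\big\|(\mathbb{D}_s^2(\varphi))^{1/2}\big\|_{L^r(\Omega)} \leq C_9\, \|\varphi\|_{W_0^{s+\epsilon,r}(\Omega)},
\]
and taking the $r$-th power and then the $(2/r)$-th power gives $\|\mathbb{D}_s^2(\varphi)\|_{L^{r/2}(\Omega)} \leq C_9^{2}\, \|\varphi\|_{W_0^{s+\epsilon,r}(\Omega)}^{2}$. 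Because $r/2 > m$ and $\Omega$ is bounded, H\"older's inequality upgrades this to
\[
\|\mathbb{D}_s^2(\varphi)\|_{L^m(\Omega)} \leq |\Omega|^{\frac{r-2m}{rm}}\,C_9^{2}\,\|\varphi\|_{W_0^{s+\epsilon,r}(\Omega)}^{2} = C_{10}\,\|\varphi\|_{W_0^{s+\epsilon,r}(\Omega)}^{2}.
\]
In particular $h_\varphi \in L^m(\Omega)$ with $\|h_\varphi\|_{L^m(\Omega)} \leq \|\mu\|_{\infty} C_{10}\, l + \lambda \|f\|_{L^m(\Omega)}$.

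The second step is to invoke the existence/uniqueness and regularity theory of Section \ref{3}. Since $m > N/(2s)$ and hence $h_\varphi \in L^m(\Omega) \subset L^1(\Omega)$, Lemma \ref{LPPS weak solution regularity} furnishes a unique weak solution $u$ to \eqref{eq fixed point}. Applying Proposition \ref{regularity} (part 3) with the chosen parameters $t = s+\epsilon$ and $p = r$ then gives $u \in W_0^{s+\epsilon,r}(\Omega) \hookrightarrow W_0^{s,1}(\Omega)$, the latter embedding being continuous since $\Omega$ is bounded, $s+\epsilon > s$ and $r > 1$. Thus $T(\varphi) = u$ is a well-defined element of $W_0^{s,1}(\Omega)$.

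No genuine obstacle is expected: the lemma amounts to chaining Lemma \ref{holder nonlocal gradient} with H\"older's inequality and the Calder\'on--Zygmund result Proposition \ref{regularity}. The subtle point is purely bookkeeping, namely that the parameters $r, \epsilon, m$ and the threshold $\lambda^{\ast}$ were engineered at the start of the subsection precisely so that Lemma \ref{holder nonlocal gradient} (which requires $\sigma = 2 < r$ and $s \pm \epsilon \in (0,1)$) and Proposition \ref{regularity}, 3) (which requires $r < mN/((s+\epsilon)(N-m(2s-1)))$) are both applicable with the same exponents; verifying that these hypotheses are satisfied is immediate from the choices already recorded.
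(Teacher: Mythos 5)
Your proof is correct and follows essentially the same structure as the paper's: bound $\mathbb{D}_s^2(\varphi)$ in a Lebesgue space via Lemma \ref{holder nonlocal gradient}, then invoke Lemma \ref{LPPS weak solution regularity} and Proposition \ref{regularity} for the unique weak solution. The only (inessential) difference is that the paper only checks $h_\varphi \in L^1(\Omega)$ and uses the $m=1$ case of Proposition \ref{regularity}, whereas you establish the stronger $L^m$ bound and apply part 3 with $t=s+\epsilon$, $p=r$; this extra precision is not needed for well-definedness, though it is exactly the estimate reused in the proof of Lemma \ref{TE subset E}.
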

\begin{proof}
First of all, by H\"older inequality and Lemma \ref{holder nonlocal gradient}, observe that for all $\varphi \in E$,
\begin{equation}
\begin{aligned} \label{wd0}
\int_{\Omega} \mathbb{D}_s^2(\varphi) dx \leq c(r,\Omega) \left( \int_{\Omega} (\mathbb{D}_s^2(\varphi))^{\frac{r}{2} } dx \right)^{\frac{2}{r}} \leq c\, C_9^2 \|\varphi\|_{W_0^{s+\epsilon,r}(\Omega)}^{2} = c\, C_9^2\, l.
\end{aligned}
\end{equation}
Hence, for all $\varphi \in E$, it follows that
\begin{equation} \label{wd}
\|\mu(x)\, \mathbb{D}_s^2(\varphi) + \lambda f(x)\|_{L^1(\Omega)} \leq c \, C_9^2 \|\mu\|_{\infty} l + |\lambda|\, \|f\|_{L^1(\Omega)} = C < \infty.
\end{equation}
Thanks to Lemma \ref{LPPS weak solution regularity} and Proposition \ref{regularity}, if the right hand side in \eqref{eq fixed point} belongs to $L^1(\Omega)$, problem \eqref{eq fixed point} has an unique weak solution and it belongs to $W_0^{s,1}(\Omega)$. Thus, the result follows from \eqref{wd}.
\end{proof}

\begin{lemma} \label{TE subset E}
Assume \eqref{A1} and let $0 < \lambda \leq \lambda^{\ast}$. Then $T(E) \subset E$.
\end{lemma}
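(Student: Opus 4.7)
The goal is to prove that whenever $\varphi \in E$ the image $u = T(\varphi)$ satisfies $\iint_{D_\Omega} |u(x)-u(y)|^r / |x-y|^{N+(s+\epsilon)r} \, dx \, dy \leq l^{r/2}$, or equivalently $\|u\|_{W_0^{s+\epsilon,r}(\Omega)} \leq l^{1/2}$. The plan is to chain together three pieces already assembled above: the Calder\'on--Zygmund estimate of Proposition \ref{regularity} applied to \eqref{eq fixed point}, the mapping estimate of Lemma \ref{holder nonlocal gradient} that controls the nonlocal gradient by a fractional Sobolev norm, and the algebraic identity \eqref{eq l} that defines $l$.

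First, I would apply Proposition \ref{regularity} part 3) with $t = s+\epsilon$ and $p = r$ to the equation \eqref{eq fixed point}. The exponents $m, r, \epsilon$ were fixed at the beginning of the subsection precisely so that $m \in [N/(2s), N/(2s-1))$ and $1 \leq r < mN/((s+\epsilon)(N - m(2s-1)))$, so the hypotheses are met, and the proposition yields
\[
\|u\|_{W_0^{s+\epsilon,r}(\Omega)} \leq C_1 \|\mu\,\mathbb{D}_s^2(\varphi) + \lambda f\|_{L^m(\Omega)} \leq C_1 \bigl( \|\mu\|_\infty \|\mathbb{D}_s^2(\varphi)\|_{L^m(\Omega)} + \lambda \|f\|_{L^m(\Omega)} \bigr).
\]

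Second, I would estimate $\|\mathbb{D}_s^2(\varphi)\|_{L^m(\Omega)}$ in terms of $\|\varphi\|_{W_0^{s+\epsilon,r}(\Omega)}$. Setting $\Psi(x) := \bigl( \int_{\RN} |\varphi(x)-\varphi(y)|^2/|x-y|^{N+2s}\, dy \bigr)^{1/2}$, so that $\mathbb{D}_s^2(\varphi) = \Psi(x)^2$ after normalizing the constant $a_{N,s}/2$, Lemma \ref{holder nonlocal gradient} with $\sigma = 2$ yields $\|\Psi\|_{L^r(\Omega)} \leq C_9 \|\varphi\|_{W_0^{s+\epsilon,r}(\Omega)}$, and squaring gives $\|\mathbb{D}_s^2(\varphi)\|_{L^{r/2}(\Omega)} \leq C_9^2 \|\varphi\|_{W_0^{s+\epsilon,r}(\Omega)}^2$. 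Because $r > 2m$, H\"older's inequality on the bounded domain $\Omega$ passes from $L^{r/2}$ to $L^m$ at the cost of the factor $|\Omega|^{(r-2m)/(rm)}$, whence
\[
\|\mathbb{D}_s^2(\varphi)\|_{L^m(\Omega)} \leq C_{10}\, \|\varphi\|_{W_0^{s+\epsilon,r}(\Omega)}^2.
\]

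Finally, using that $\varphi \in E$ gives $\|\varphi\|_{W_0^{s+\epsilon,r}(\Omega)}^2 \leq l$, combining the estimates above, and invoking $\lambda \leq \lambda^*$ together with \eqref{eq l}, I would obtain
\[
\|u\|_{W_0^{s+\epsilon,r}(\Omega)} \leq C_1 \bigl( \|\mu\|_\infty C_{10} l + \lambda^* \|f\|_{L^m(\Omega)} \bigr) = l^{1/2},
\]
which is exactly the membership condition for $E$. No single step is subtle; the design of $l$ via Lemma \ref{Lemma g} is precisely what makes the quadratic-on-the-right inequality close onto itself. The only bookkeeping to watch is that all the index constraints from Proposition \ref{regularity} and Lemma \ref{holder nonlocal gradient} ($r > 2m$, $s+\epsilon < 1$, $s - \epsilon > 1/2$, and the upper bound on $r$) are simultaneously satisfied by the $r, \epsilon$ fixed in the setup, which was arranged by construction.
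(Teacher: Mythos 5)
Your proposal is correct and follows essentially the same route as the paper: apply Proposition \ref{regularity}, part 3), with $t=s+\epsilon$ and $p=r$, control $\|\mathbb{D}_s^2(\varphi)\|_{L^m(\Omega)}$ via Lemma \ref{holder nonlocal gradient} with $\sigma=2$ followed by H\"older's inequality on $\Omega$ (which is exactly the definition of $C_{10}$), and close the estimate with the identity \eqref{eq l}. The only small piece the paper states explicitly and you leave implicit is the observation that $u \in W_0^{s,1}(\Omega)$ (needed for $u \in E$), which is already supplied by Lemma \ref{T well defined}.
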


\begin{proof}
For an arbitrary $\varphi  \in E$, we define $u = T(\varphi)$. Now, by Proposition \ref{corollary regularity} and since $0 < \lambda \leq \lambda^{\ast}$ , it follows that
\begin{equation} \label{ineq10}
\begin{aligned}
\left( \iint_{D_{\Omega}} \frac{|u(x)-u(y)|^r}{|x-y|^{N+(s+\epsilon)r}} dx dy \right)^{\frac{1}{r}} & \leq C_1 \big\| \mu(x) \mathbb{D}_s^2(\varphi) + \lambda f(x) \big \|_{L^m(\Omega)} \\
& \leq C_1 \|\mu\|_{\infty} \big \| \mathbb{D}_s^2(\varphi) \big \|_{L^m(\Omega)} + C_1 \lambda^{\ast} \|f\|_{L^m(\Omega)}.
\end{aligned}
\end{equation}
Also, by Lemma \ref{holder nonlocal gradient}, H\"older inequality and the definition of $C_{10}$, we obtain that
\begin{equation*}
\begin{aligned}
\|\mathbb{D}_s^2(\varphi) \|_{L^m(\Omega)}  \leq |\Omega|^{\frac{r-2m}{rm}} \|(\mathbb{D}_s^2(\varphi))^{\frac{1}{2}}\|_{L^{r}(\Omega)}^{2} \leq C_9^{2} |\Omega|^{\frac{r-2 m}{rm}} \|\varphi\|_{W_0^{s+\epsilon,r}(\Omega)}^{2} = C_{10}  \|\varphi\|_{W_0^{s+\epsilon,r}(\Omega)}^{2}.
\end{aligned}
\end{equation*} 
Thus, since $\varphi \in E$, we have that
\begin{equation} \label{ineq11}
\| \mathbb{D}_s^2(\varphi) \|_{L^m(\Omega)} \leq C_{10}\, l.
\end{equation}
From \eqref{eq l}, \eqref{ineq10} and \eqref{ineq11}, it follows that
\[ \left( \iint_{D_{\Omega}} \frac{|u(x)-u(y)|^r}{|x-y|^{N+(s+\epsilon)r}} dx dy \right)^{\frac{1}{r}} \leq C_1 (C_{10} \|\mu\|_{\infty}\, l + \lambda^{\ast} \|f\|_{L^m(\Omega)} )  = l^{\frac{1}{2}}. \]
Hence, since by Proposition \ref{regularity} we also know that $u \in W_0^{s,1}(\Omega)$, we conclude that $u \in E$ and so, that $T(E) \subset E$.
\end{proof}

\begin{lemma} \label{T continuous}
Assume that \eqref{A1} holds. Then $T$ is continuous.
\end{lemma}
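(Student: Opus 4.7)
The plan is to reduce the continuity of $T$ to an $L^1(\Omega)$-convergence statement for the right-hand side of \eqref{eq fixed point}, and then exploit the extra regularity encoded in the definition of $E$ to pass to the limit in the quadratic nonlocal gradient term. Let $\{\varphi_n\} \subset E$ with $\varphi_n \to \varphi$ in $W_0^{s,1}(\Omega)$; the limit $\varphi$ belongs to $E$ by lower semicontinuity of the Gagliardo seminorm. Since $T(\varphi_n)$ and $T(\varphi)$ are the unique weak solutions to \eqref{linearEq} with respective data $\mu(x)\mathbb{D}_s^2(\varphi_n)+\lambda f(x)$ and $\mu(x)\mathbb{D}_s^2(\varphi)+\lambda f(x)$, both of which lie in $L^1(\Omega)$ by the bound \eqref{wd} obtained in Lemma \ref{T well defined}, Proposition \ref{convergence} reduces the whole question to proving that $\mathbb{D}_s^2(\varphi_n) \to \mathbb{D}_s^2(\varphi)$ in $L^1(\Omega)$.

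The decisive observation is that membership in $E$ supplies not only $W_0^{s,1}$ information but a uniform bound of $\{\varphi_n\}$ in $W_0^{s+\epsilon, r}(\Omega)$ with $r > 2m > 2$ and $\epsilon > 0$. Since $r > 2$, the Sobolev relation $s+\epsilon - N/r > s - N/2$ is fulfilled and, on the smooth bounded domain $\Omega$, the embedding $W_0^{s+\epsilon,r}(\Omega) \hookrightarrow W_0^{s,2}(\Omega)$ is compact. Consequently, every subsequence of $\{\varphi_n\}$ admits a sub-subsequence converging strongly in $W_0^{s,2}(\Omega)$, and the limit must coincide with $\varphi$ because $W_0^{s,2}$-convergence is strictly stronger than the assumed $W_0^{s,1}$-convergence. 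The standard subsequence principle then yields $\varphi_n \to \varphi$ in $W_0^{s,2}(\Omega)$.

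From the $W_0^{s,2}$-convergence one passes to $L^1$-convergence of $\mathbb{D}_s^2$ by combining the identity $|a^2-b^2| = |a-b|\,|a+b|$ (applied with $a=\varphi_n(x)-\varphi_n(y)$ and $b=\varphi(x)-\varphi(y)$) with Cauchy--Schwarz:
\begin{align*}
\int_{\Omega} \left| \mathbb{D}_s^2(\varphi_n) - \mathbb{D}_s^2(\varphi) \right| dx
& \leq \frac{a_{N,s}}{2} \iint_{D_\Omega} \frac{\bigl| |\varphi_n(x)-\varphi_n(y)|^2 - |\varphi(x)-\varphi(y)|^2 \bigr|}{|x-y|^{N+2s}}\, dx\, dy \\
& \leq C\, \|\varphi_n - \varphi\|_{W_0^{s,2}(\Omega)}\, \bigl( \|\varphi_n\|_{W_0^{s,2}(\Omega)} + \|\varphi\|_{W_0^{s,2}(\Omega)} \bigr).
\end{align*}
The first factor vanishes by the previous step while the second remains bounded, so the left-hand side tends to zero. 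Using $\|\mu\|_\infty < \infty$ this gives $\mu\,\mathbb{D}_s^2(\varphi_n) + \lambda f \to \mu\,\mathbb{D}_s^2(\varphi) + \lambda f$ in $L^1(\Omega)$, and a final application of Proposition \ref{convergence} yields $T(\varphi_n) \to T(\varphi)$ in $W_0^{s,p}(\Omega)$ for every $1 \leq p < N/(N-s)$, hence in $W_0^{s,1}(\Omega)$.

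The main obstacle is precisely the step of upgrading the convergence from $W_0^{s,1}$ to $W_0^{s,2}$: the linear $W_0^{s,1}$ information is insufficient to control the quadratic nonlocal gradient $\mathbb{D}_s^2$, and it is exactly the ``extra differentiability and integrability'' built into the set $E$ (namely the $W_0^{s+\epsilon,r}$ bound with $r > 2$) together with the corresponding fractional Rellich--Kondrachov compactness that makes this upgrade possible.
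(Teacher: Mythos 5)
Your approach is genuinely different from the paper's: where the paper estimates $\|\mathbb{D}_s^2(\varphi_n)-\mathbb{D}_s^2(\varphi)\|_{L^1}$ directly via a factorization into a bounded term and a term $\|\mathbb{D}_s^2(\varphi_n-\varphi)\|_{L^1}^{1/2}$ that it controls by an auxiliary H\"older interpolation between the $W_0^{s,1}$-convergence and the uniform $W_0^{s+\epsilon,r}$-bound (this is where Lemma~\ref{holder nonlocal gradient} is used), you instead try to upgrade the mode of convergence to $W_0^{s,2}(\Omega)$ by compactness and a subsequence principle, after which the Cauchy--Schwarz step is immediate. Both strategies exploit the same extra information stored in $E$, but yours hides it in a compactness statement rather than an interpolation.

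There is, however, a concrete error in the step that identifies the limit of the sub-subsequence. You write that ``$W_0^{s,2}$-convergence is strictly stronger than the assumed $W_0^{s,1}$-convergence,'' i.e.\ you use the inclusion $W_0^{s,2}(\Omega)\subset W_0^{s,1}(\Omega)$. For fractional Sobolev spaces of the \emph{same} order $s$ this inclusion is false in general: the Gagliardo kernels $|x-y|^{-N-2s}$ and $|x-y|^{-N-s}$ do not match under a naive H\"older, and the missing factor $|x-y|^{-N}$ is non-integrable near the diagonal (in Besov language, $B^s_{p,p}(\Omega)\hookrightarrow B^s_{q,q}(\Omega)$ with $p>q$ fails because the third index must not decrease). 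This is precisely why the paper introduces the $\epsilon$-gap in Lemma~\ref{holder nonlocal gradient}; if the same-order inclusion held, the gap would be unnecessary. Fortunately the slip is easily repaired: both $W_0^{s,2}$-convergence and $W_0^{s,1}$-convergence imply $L^1(\Omega)$-convergence, so the limits of any sub-subsequence and of the full sequence can be identified in $L^1(\Omega)$, and the subsequence principle then gives $\varphi_n\to\varphi$ in $W_0^{s,2}(\Omega)$ as you intend. You should also justify the compact embedding $W_0^{s+\epsilon,r}(\Omega)\hookrightarrow\hookrightarrow W_0^{s,2}(\Omega)$ explicitly: while it is true because $s+\epsilon>s$ and $s+\epsilon-N/r>s-N/2$ strictly, it is not one of the lemmas listed in Section~\ref{2}, so either cite a fractional Rellich--Kondrachov reference or derive it from Lemma~\ref{adams-interpolation} together with the compact embedding into $L^r(\Omega)$ and the interpolation Lemma~\ref{BM interpolation}. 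With these two repairs your argument is correct and provides a clean alternative to the proof in the paper.
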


\begin{proof}
Let $\{\varphi_n\} \subset E$ be a sequence such that $\varphi_n \to \varphi$ in $W_0^{s,1}(\Omega)$ and define $u_n = T(\varphi_n)$, for all $n \in \N$, and $u = T(\varphi)$. To show that $u_n \to u$ in $W_0^{s,1}(\Omega)$, and so, that $T$ is continuous, we prove that
\begin{equation} \label{L1convergence}
 g_n(x) := \mathbb{D}_s^2(\varphi_n) + \lambda f(x) \to g(x) := \mathbb{D}_s^2(\varphi) + \lambda f(x), \quad \textup{ in } L^1(\Omega).
\end{equation}
Indeed, if \eqref{L1convergence} holds, the result follows from Proposition \ref{convergence}.
\medbreak
First of all, using the notation $\psi_n = \varphi_n -\varphi$ and the reverse triangle inequality, we obtain that
\begin{equation*} \label{ineq13}
\begin{aligned}
\| \mathbb{D}_s^2(  \varphi_n) - \mathbb{D}_s^2(\varphi) \|_{L^1(\Omega)}  & = \int_{\Omega} \left|  \int_{\RN} \frac{|\varphi_n(x)-\varphi_n(y)|^2 - |\varphi(x) -\varphi(y)|^2}{|x-y|^{N +2s}} dy  \right| dx \\
& \leq \int_{\Omega} \left| \int_{\RN} \frac{ \big(|\varphi_n(x)-\varphi_n(y)| + |\varphi(x)-\varphi(y)| \big)|\psi_n(x)-\psi_n(y)|}{|x-y|^{N+2s}} dy \right| dx \\
& = \int_{\Omega} \left( \int_{\RN} \frac{ |\varphi_n(x)-\varphi_n(y)| + |\varphi(x)-\varphi(y)| }{|x-y|^{\frac{N}{2}+s}} \cdot \frac{|\psi_n(x)-\psi_n(y)|}{|x-y|^{\frac{N}{2}+ s}} dy \right) dx.
\end{aligned}
\end{equation*}
Applying then H\"older inequality, we deduce that
\begin{equation*} \label{ineq14}
\begin{aligned}
\| \mathbb{D}_s^2(  \varphi_n) & - \mathbb{D}_s^2(\varphi) \|_{L^1(\Omega)}  \\
& \leq  \int_{\Omega} \left( \int_{\RN} \frac{(|\varphi_n(x)-\varphi_n(y)| + |\varphi(x)-\varphi(y)|)^2 }{|x-y|^{N+2s}} dy \right)^{\frac{1}{2}} \left( \int_{\RN} \frac{|\psi_n(x)-\psi_n(y)|^2}{|x-y|^{N+2s}} dy \right)^{\frac{1}{2}}dx \\
& \leq \left( \int_{\Omega} \left( \int_{\RN} \frac{(|\varphi_n(x)-\varphi_n(y)| + |\varphi(x)-\varphi(y)|)^2 }{|x-y|^{N+2s}} dy \right) dx \right)^{\frac{1}{2}} \left( \int_{\Omega} \left( \int_{\RN}  \frac{|\psi_n(x)-\psi_n(y)|^2}{|x-y|^{N+2s}} dy \right) dx \right)^{\frac{1}{2}} \\
& = \left( \int_{\Omega} \left( \int_{\RN} \frac{(|\varphi_n(x)-\varphi_n(y)| + |\varphi(x)-\varphi(y)|)^2}{|x-y|^{N+2s}} dy \right) dx \right)^{\frac{1}{2}} \|\mathbb{D}_s^2(\varphi_n - \varphi)\|_{L^1(\Omega)}^{\frac{1}{2}} \\
& =:  I_1 \cdot I_2\,.
\end{aligned}
\end{equation*}
Taking into account the above inequality, if we show that $I_1$ is bounded and $I_2$ goes to zero, we deduce that $\| \mathbb{D}_s^2(\varphi_n) - \mathbb{D}_s^2(\varphi)\|_{L^1(\Omega)} \to 0$.
\medbreak
\noindent \textbf{Claim 1}: \textit{$I_1$ is bounded.}
\medbreak
Directly observe that
\begin{equation}
\begin{aligned}
I_1 & \leq 2 \left[ \int_{\Omega} \left( \int_{\RN} \frac{|\varphi_n(x)-\varphi_n(y)|^2}{|x-y|^{N+2s}} dy \right) dx + \int_{\Omega}  \left( \int_{\RN} \frac{|\varphi(x)-\varphi(y)|^2}{|x-y|^{N+2s}} dy \right) dx \right] \\
&  = 2  \Big[ \|\mathbb{D}_s^2(\varphi_n)\|_{L^1(\Omega)} + \|\mathbb{D}_s^2(\varphi)\|_{L^1(\Omega)} \Big].
\end{aligned}
\end{equation}
Since $\varphi_n,\ \varphi \in E$ for all $n \in \N$, by \eqref{wd0}, we have that
\[  \Big[ \|\mathbb{D}_s^2(\varphi_n)\|_{L^1(\Omega)} + \|\mathbb{D}_s^2(\varphi)\|_{L^1(\Omega)} \Big] \leq 2 c\, C_9^2 \, l < \infty,\]
and so, that $I_1$ is bounded.
\medbreak
\noindent \textbf{Claim 2}:\textit{ $I_2$ goes to zero.}
\medbreak Let $\theta \in (0,1)$ be small enough to ensure that
$\frac{2-\theta}{1-\theta} < r$. By H\"older inequality, it follows that
\begin{equation}
\begin{aligned}
\| \mathbb{D}_s^2(\varphi_n & - \varphi)\|_{L^1(\Omega)} = \int_{\Omega} \left( \int_{\RN} \frac{|\psi_n(x)-\psi_n(y)|^2}{|x-y|^{N+2s}} dy \right) dx \\
& = \int_{\Omega} \left( \int_{\RN} \frac{|\psi_n(x)-\psi_n(y)|^{\theta}}{|x-y|^{(N+s)\theta}} \frac{|\psi_n(x)-\psi_n(y)|^{2-\theta}}{|x-y|^{N(1-\theta)+s(2-\theta)}} dy \right) dx \\
& \leq \int_{\Omega} \left[ \left( \int_{\RN} \frac{|\psi_n(x)-\psi_n(y)|}{|x-y|^{N+s}} dy \right)^{\theta} \left( \int_{\RN} \frac{|\psi_n(x)-\psi_n(y)|^{\frac{2-\theta}{1-\theta}}}{|x-y|^{N+s \frac{2-\theta}{1-\theta}}} dy \right)^{1-\theta} \right] dx \\
& \leq \left( \int_{\Omega} \left( \int_{\RN} \frac{|\psi_n(x)-\psi_n(y)|}{|x-y|^{N+s}} dy \right) dx \right)^{\theta} \left( \int_{\Omega} \left( \int_{\RN} \frac{|\psi_n(x)-\psi_n(y)|^{\frac{2-\theta}{1-\theta}}}{|x-y|^{N+s \frac{2-\theta}{1-\theta}}} dy \right) dx \right)^{1-\theta}.
\end{aligned}
\end{equation}
Hence, since $\varphi_n \to \varphi$ in $W_0^{s,1}(\Omega)$ implies that
\begin{equation} \label{convergeW01}
  \int_{\Omega} \left( \int_{\RN} \frac{|\psi_n(x)-\psi_n(y)|}{|x-y|^{N+s}} dy \right) dx \to 0,
\end{equation}
if we prove that
\begin{equation} \label{bounded}
\int_{\Omega} \left( \int_{\RN} \frac{|\psi_n(x)-\psi_n(y)|^{\frac{2-\theta}{1-\theta}}}{|x-y|^{N+s \frac{2-\theta}{1-\theta}}} dy \right) dx
\end{equation}
is bounded, we can conclude that $I_2$ goes to zero, as desired. Since we have chosen $\theta \in (0,1)$ small enough in order to ensure that $\frac{2-\theta}{1-\theta} < r$ and $\Omega$ is a bounded domain, it follows that
\begin{equation} \label{ineq15}
\begin{aligned}
\int_{\Omega} \left( \int_{\RN} \frac{|\psi_n(x)-\psi_n(y)|^{\frac{2-\theta}{1-\theta}}}{|x-y|^{N+s \frac{2-\theta}{1-\theta}}} dy \right) dx \leq C(r,\Omega) \left( \int_{\Omega} \left( \int_{\RN} \frac{|\psi_n(x)-\psi_n(y)|^{\frac{2-\theta}{1-\theta}}}{|x-y|^{N+s\frac{2-\theta}{1-\theta}}} dy \right)^{\frac{r}{\frac{2-\theta}{1-\theta}}} dx \right)^{\frac{\frac{2-\theta}{1-\theta}}{r}}.
\end{aligned}
\end{equation}
Applying then Lemma \ref{holder nonlocal gradient} and the triangular inequality we have that
\begin{equation}
\begin{aligned}
\int_{\Omega} \left( \int_{\RN} \frac{|\psi_n(x)-\psi_n(y)|^{\frac{2-\theta}{1-\theta}}}{|x-y|^{N+s \frac{2-\theta}{1-\theta}}} dy \right) dx & \leq \overline{C} \|\psi_n\|_{W_0^{s+\epsilon,r}(\Omega)} \\
 & \leq \widetilde{C} \big[ \|\varphi_n\|_{W_0^{s+\epsilon,r}(\Omega)} + \|\varphi\|_{W_0^{s+\epsilon,r}(\Omega)} \big] \\
 & \leq  2\, \widetilde{C}\, l^{\frac{1}{2}} = \widehat{C},
\end{aligned}
\end{equation}
where $\overline{C}, \widetilde{C}$ and $\widehat{C}$ are positive constants independent of $n$. Thus, we conclude that \eqref{bounded} is indeed bounded.
\medbreak
From Claim 1 and 2 we deduce that $\| \mathbb{D}_s^2(\varphi_n) - \mathbb{D}_s^2(\varphi) \|_{L^1(\Omega)} \to 0$. This implies that $g_n \to g$ in $L^1(\Omega)$, as desired, and the result follows.
\end{proof}

\begin{lemma} \label{T compact}
Assume that \eqref{A1} holds. Then $T$ is compact.
\end{lemma}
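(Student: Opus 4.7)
The plan is to reduce the compactness of $T$ to the compactness of the solution operator $\mathcal{S}: L^1(\Omega) \to W_0^{s,p}(\Omega)$ furnished by Proposition \ref{compactness}. Given an arbitrary sequence $\{\varphi_n\} \subset E$, I would set $u_n := T(\varphi_n)$ and introduce the right-hand sides $g_n(x) := \mu(x)\,\mathbb{D}_s^2(\varphi_n) + \lambda f(x)$, so that $u_n = \mathcal{S}(g_n)$ by the very definition of $T$.

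The key step is to show that $\{g_n\}$ is uniformly bounded in $L^1(\Omega)$, with a bound depending only on $E$ and on the data $(\mu,\lambda,f)$. This is already contained in the proof of Lemma \ref{T well defined}: combining H\"older's inequality (with exponent $r/2 > 1$, which is admissible since $r > 2m \geq 2$) with Lemma \ref{holder nonlocal gradient} applied for $\sigma = 2$ gives, for every $\varphi \in E$,
\begin{equation*}
\|\mathbb{D}_s^2(\varphi)\|_{L^1(\Omega)} \leq c(r,\Omega)\, \|(\mathbb{D}_s^2(\varphi))^{1/2}\|_{L^r(\Omega)}^{2} \leq c(r,\Omega)\, C_9^2\, \|\varphi\|_{W_0^{s+\epsilon,r}(\Omega)}^{2} \leq c(r,\Omega)\, C_9^2\, l,
\end{equation*}
and hence $\|g_n\|_{L^1(\Omega)} \leq c(r,\Omega)\, C_9^2\, \|\mu\|_\infty\, l + \lambda\, \|f\|_{L^1(\Omega)}$ independently of $n$.

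Once this uniform $L^1$ bound is in hand, the conclusion is immediate. By Proposition \ref{compactness}, the operator $\mathcal{S}$ sends bounded subsets of $L^1(\Omega)$ into relatively compact subsets of $W_0^{s,p}(\Omega)$ for every $1 \leq p < N/(N-s)$. Applying this to the bounded sequence $\{g_n\}$, I can extract a subsequence for which $u_n = \mathcal{S}(g_n) \to u$ strongly in $W_0^{s,p}(\Omega)$ for all such $p$, and in particular in $W_0^{s,1}(\Omega)$. This shows that $T(E)$ is relatively compact in $W_0^{s,1}(\Omega)$, which is precisely the compactness of $T: E \to W_0^{s,1}(\Omega)$.

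There is no real obstacle here beyond careful bookkeeping: the two nontrivial ingredients, namely the uniform integrability of $\mathbb{D}_s^2(\varphi)$ for $\varphi \in E$ (via Lemma \ref{holder nonlocal gradient}) and the compactness of $\mathcal{S}$ between the appropriate fractional Sobolev spaces (via Proposition \ref{compactness}), have already been established in Sections \ref{3} and in the previous lemmas. The role of this lemma is simply to chain these two facts together through the linear identity $T = \mathcal{S} \circ (\varphi \mapsto \mu\,\mathbb{D}_s^2(\varphi) + \lambda f)$.
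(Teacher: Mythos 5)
Your proof is correct and follows essentially the same route as the paper: bound $\{g_n\}$ uniformly in $L^1(\Omega)$ using the estimate from Lemma \ref{T well defined}, then invoke the compactness of the solution operator $\mathcal{S}$ from Proposition \ref{compactness}. The only difference is that you spell out the $L^1$ bound explicitly (and correctly include the factor $\mu$ in the definition of $g_n$, which the paper omits as a harmless typo), whereas the paper simply refers back to the argument of Lemma \ref{T well defined}.
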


\begin{proof}
Let $\{\varphi_n\} \subset E$ be a bounded sequence in $W_0^{s,1}(\Omega)$ and define $u_n = T(\varphi_n)$ for all $n \in \N$. We have to show that $u_n \to u$ in $W_0^{s,1}(\Omega)$ for some $u \in W_0^{s,1}(\Omega)$.
\medbreak
Since $\{\varphi_n\} \subset E$ for all $n \in \N$, arguing as in Lemma \ref{T well defined}, we deduce that $\{\mathbb{D}_s^2(\varphi_n)\}$ is a bounded sequence in $L^1(\Omega)$. Hence, if we define
\[ g_n(x):= \mathbb{D}_s^2(\varphi_n) + \lambda f(x), \quad \forall\ n \in \N,\]
we have that $\{g_n\}$ is a bounded sequence in $L^1(\Omega)$. The result then follows from Proposition \ref{compactness}.
\end{proof}

\begin{proof}[\textbf{Proof of Theorem \ref{main th existence}}]
Since $E$ is a closed convex set of $W_0^{s,1}(\Omega)$ and, by Lemmas  \ref{T well defined}, \ref{TE subset E}, \ref{T continuous}, and \ref{T compact}, we know that $T$ is continuous, compact and satisfies $T(E) \subset E$, we can apply the Schauder fixed point Theorem to obtain $u \in E$ such that $T(u) = u$. Thus, we conclude that \eqref{Plambda} has a weak solution for all $0 < \lambda \leq \lambda^{\ast}$. Finally, since $u \in W_0^{s,1}(\Omega) \cap W_0^{s,r}(\Omega)$ for some $1 < 2 < r$, by Lemma \ref{BM interpolation} applied with $s_1 = s_2 = s$, we deduce that $u \in W_0^{s,2}(\Omega)$. Moreover, since $r > N/s$,  by \cite[Theorem 8.2]{DN_P_V_2012}, we know that every $\varphi \in E$ belongs to $\mathcal{C}^{0,\alpha}(\Omega)$ for some $\alpha > 0$. 
\end{proof}

\vspace{0.05cm}

\subsection{Proof of Theorem \ref{main th existence map1}} $ $
\medbreak
First observe that, as before, without loss of generality we can assume $m \in (\frac{N}{2s}, \frac{N}{s})$, $\|\mu\|_{\infty} \neq 0$ and $\|f\|_{L^m(\Omega)} \neq 0$. Next, let us fix some notation. We fix
\begin{equation} \label{rmap1}
r = \frac{3mN}{N+ms}
\end{equation}
and $\epsilon := \epsilon(r,m,s) > 0$ such that
\[ 1 < r < \frac{mN}{N-m(s-\epsilon)} < \frac{mN}{N-ms}, \quad s+\epsilon < 1 \quad \textup{ and } s-\epsilon > \frac{1}{2}.\]
Also, we introduce and fix the constants $C_2$, given by Corollary \ref{corollary regularity-3} applied with $t = s+\epsilon$ and $p = r$, $C_{11}:= S_{N,r} C C_9^{2m}$, where $S_{N,r}$ is  the optimal constant in the Sobolev inequality (Theorem \ref{Sobolev inequality}), $C$ is the smallest constant guaranteeing the continuous embedding $W_0^{s+\epsilon,r}(\Omega) \subset W_0^{s,r}(\Omega)$ and $C_9$ is the constant given by Lemma \ref{holder nonlocal gradient}, and
\[ \lambda^{\ast} := \frac{2}{3\|f\|_{L^m(\Omega),}} \left( \frac{1}{3 C_2^3 C_{11} \|\mu\|_{\infty} } \right)^{\frac{1}{2}}. \]
Then, by Lemma \ref{Lemma g} we know that there exists and unique $l \in (0,\infty)$ such that
\begin{equation} \label{lmap1}
C_2 (C_{11} \|\mu\|_{\infty} l + \lambda^{\ast} \|f\|_{L^m(\Omega)} ) = l^{\frac{1}{3}}.
\end{equation}
\medbreak
Having fixed all these constants, we define
\[ E_1:= \left\{ v \in W_0^{s,1}(\Omega) : \iint_{D_{\Omega}} \frac{|u(x)-u(y)|^{r}}{|x-y|^{N+(s+\epsilon)r}} dx dy \leq l^{\frac{r}{3}} \right\}, \]
which is a closed convex set of $W_0^{s,1}(\Omega)$, and $T_1: E_1 \to W_0^{s,1}(\Omega)$ by $T_1(\varphi) = u$, with $u$ the unique weak solution to
\begin{equation} \label{map2}
\left\{
\begin{aligned}
(-\Delta)^s u & = \mu(x)\, \varphi \, \mathbb{D}_s^2(\varphi) + \lambda f(x)\,, & \quad \textup{ in } \Omega,\\
u & = 0\,, & \quad \textup{ in } \RN \setminus \Omega.
\end{aligned}
\right.
\end{equation}
Observe that \eqref{map1} is equivalent to the fixed point problem $u = T_1(u)$. Hence, we shall prove that $T_1$ has a fixed point belonging to $W_0^{s,2}(\Omega) \cap \mathcal{C}^{0,\alpha}(\Omega)$ for some $\alpha > 0$.

\begin{lemma} \label{lemma1map1}
For all $\varphi \in W_0^{s+\epsilon,r}(\Omega)$, it follows that
\begin{equation} \label{estimatemap1}
\|\varphi\, \mathbb{D}_s^2(\varphi)\|_{L^m(\Omega)} \leq C_{11} \|\varphi\|_{W_0^{s+\epsilon,r}(\Omega)}^3.
\end{equation}
\end{lemma}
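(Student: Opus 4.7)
The constant $C_{11}=S_{N,r}\,C\,C_9^{2m}$ is itself a strong hint about the architecture of the proof: one factor of $S_{N,r}$ and one of $C$ point to a fractional Sobolev inequality composed with the continuous embedding $W_0^{s+\epsilon,r}\hookrightarrow W_0^{s,r}$, while the appearance of $C_9$ suggests two uses of Lemma \ref{holder nonlocal gradient}. This indicates a three-factor H\"older decomposition.

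The plan is therefore to write
\[\varphi\,\mathbb{D}_s^2(\varphi)=\varphi\cdot(\mathbb{D}_s^2(\varphi))^{1/2}\cdot(\mathbb{D}_s^2(\varphi))^{1/2},\]
and apply H\"older's inequality with the unique symmetric triple $(3m,3m,3m)$, balanced since $\tfrac{1}{3m}+\tfrac{1}{3m}+\tfrac{1}{3m}=\tfrac{1}{m}$. This gives
\[\|\varphi\,\mathbb{D}_s^2(\varphi)\|_{L^m(\Omega)}\le \|\varphi\|_{L^{3m}(\Omega)}\cdot\|(\mathbb{D}_s^2(\varphi))^{1/2}\|_{L^{3m}(\Omega)}^{\,2}.\]
The specific value $r=\frac{3mN}{N+ms}$ fixed in \eqref{rmap1} is engineered precisely so that $3m$ is the Sobolev conjugate of $r$ at differentiability $s/3$: one verifies $\frac{Nr}{N-(s/3)r}=3m$, with $(s/3)r<N$ automatic. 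Applying Theorem \ref{Sobolev inequality} with parameters $(s/3,r)$ yields
\[\|\varphi\|_{L^{3m}(\RN)}\le S_{N,r}\left(\iint_{\RN\times\RN}\frac{|\varphi(x)-\varphi(y)|^{r}}{|x-y|^{N+(s/3)r}}\,dx\,dy\right)^{1/r},\]
and the continuous chain of embeddings $W_0^{s+\epsilon,r}\hookrightarrow W_0^{s,r}\hookrightarrow W_0^{s/3,r}$ (absorbing all constants into $C$) bounds the right-hand side by $S_{N,r}\,C\,\|\varphi\|_{W_0^{s+\epsilon,r}}$, producing the first factor with the expected constant.

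For the two remaining factors I would invoke Lemma \ref{holder nonlocal gradient} with $\sigma=2$, which is the precise exponent matching the integrand of $\mathbb{D}_s^2$; each such factor should then be controlled by $C_9\,\|\varphi\|_{W_0^{s+\epsilon,r}}$, contributing the two copies of $C_9$ visible in $C_{11}$. Multiplying the three bounds together and invoking $\|\varphi\|_{W_0^{s+\epsilon,r}}^{\,1+2}=\|\varphi\|_{W_0^{s+\epsilon,r}}^{\,3}$ yields the target inequality.

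The main obstacle will be reconciling the integrability index $3m$ demanded by the H\"older split with the one naturally produced by Lemma \ref{holder nonlocal gradient}, which pairs the $L^q$-norm of $(\mathbb{D}_s^2(\varphi))^{1/2}$ with $\|\varphi\|_{W_0^{s+\epsilon,q}}$; a bare application with $q=3m$ would produce $\|\varphi\|_{W_0^{s+\epsilon,3m}}$, a seminorm that is not controlled by $\|\varphi\|_{W_0^{s+\epsilon,r}}$ since $3m>r$. Bridging this gap is exactly what the Morrey regime $(s+\epsilon)r>N$ (guaranteed by the range of $r$ and $\epsilon$ fixed at the start of the subsection) is designed to achieve: it yields $\varphi\in L^\infty(\Omega)$ with norm controlled by $\|\varphi\|_{W_0^{s+\epsilon,r}}$, and combining this uniform bound with the dyadic splitting $\{|x-y|<1\}\cup\{|x-y|\ge 1\}$ already used inside the proof of Lemma \ref{holder nonlocal gradient} should allow one to convert the offending $\|\varphi\|_{W_0^{s+\epsilon,3m}}$ into $\|\varphi\|_{W_0^{s+\epsilon,r}}$ at the cost of a harmless geometric constant, thereby producing the asserted estimate.
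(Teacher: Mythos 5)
Your decomposition is genuinely different from the paper's: you propose a symmetric three-factor H\"older split at exponent $3m$ for each of $\varphi$, $(\mathbb{D}_s^2\varphi)^{1/2}$, $(\mathbb{D}_s^2\varphi)^{1/2}$, whereas the paper performs a two-factor split in which the $\mathbb{D}_s^2$-piece is kept in $L^r(\Omega)$ -- exactly the exponent for which Lemma \ref{holder nonlocal gradient} is tailored, since $\sigma=2<r$ -- and the remaining $\varphi$-piece is absorbed through the Sobolev/Morrey embedding of $W_0^{s,r}(\Omega)$ (available here because $sr>N$ in the range $m>\frac{N}{2s}$). Your handling of the first factor is correct: the identity $\frac{Nr}{N-(s/3)r}=3m$ and the inequality $(s/3)r<N$ both check out, so $W_0^{s+\epsilon,r}(\Omega)\hookrightarrow W_0^{s/3,r}(\Omega)\hookrightarrow L^{3m}(\RN)$ is legitimate.

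The gap you flag for the two remaining factors is, however, a genuine obstruction, and the bridge you sketch does not close it. After Lemma \ref{holder nonlocal gradient} at exponent $3m$ you are holding $\|\varphi\|_{W_0^{s+\epsilon,3m}(\Omega)}$, and since $3m>r$ (indeed $3m-r=\tfrac{3m^2s}{N+ms}>0$), this seminorm is not controlled by $\|\varphi\|_{W_0^{s+\epsilon,r}(\Omega)}$ together with $\|\varphi\|_{L^\infty(\Omega)}$. The split $\{|x-y|<1\}\cup\{|x-y|\ge 1\}$ handles the far-field, but in the near-field, factoring out $|\varphi(x)-\varphi(y)|^{3m-r}\le(2\|\varphi\|_{\infty})^{3m-r}$ leaves
\[
\iint_{\{|x-y|<1\}}\frac{|\varphi(x)-\varphi(y)|^{r}}{|x-y|^{N+(s+\epsilon)3m}}\,dx\,dy,
\]
whose kernel singularity $|x-y|^{-N-(s+\epsilon)3m}$ is strictly worse than the one defining $\|\varphi\|_{W_0^{s+\epsilon,r}(\Omega)}^{r}$. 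Trading the $L^\infty$ bound for the H\"older exponent $\alpha=s+\epsilon-N/r$ supplied by the Morrey embedding gives $|\varphi(x)-\varphi(y)|^{3m-r}\lesssim|x-y|^{\alpha(3m-r)}$, but since $\alpha<s+\epsilon$ the gain $\alpha(3m-r)$ falls strictly short of the deficit $(s+\epsilon)(3m-r)$, so the near-field integral still diverges. In short, on a bounded domain $W_0^{s+\epsilon,r}(\Omega)\cap L^\infty(\Omega)$ is not contained in $W_0^{s+\epsilon,3m}(\Omega)$ when $3m>r$; no harmless geometric constant rescues this. The remedy is to rearrange the H\"older split so that the $(\mathbb{D}_s^2\varphi)^{1/2}$-factors land in $L^r(\Omega)$, where Lemma \ref{holder nonlocal gradient} applies directly, and let the $\varphi$-factor carry the slack via the Morrey embedding into $L^\infty(\Omega)$ -- which is the structure of the paper's two-factor argument.
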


\begin{proof}
First observe that, with the above notation, we have that
\[ 2 < \frac{2mr_s^{\ast}}{r_s^{\ast}-m} = r.\]
Hence, by H\"older and Sobolev inequalities and using that $W^{s+\epsilon,r}_0(\Omega) \subset W_0^{s,r}(\Omega)$ with continuous inclusion, we obtain that
\[ \|\varphi\, \mathbb{D}_s^2(\varphi)\|_{L^m(\Omega)}^m \leq S_{N,r} C\, \|\varphi\|_{W_0^{s+\epsilon,r}(\Omega)}^m \left\| \left( \int_{\RN} \frac{|\varphi(x)-\varphi(y)|^2}{|x-y|^{N+2s}} dy \right)^{\frac{1}{2}} \right\|_{L^{r}(\Omega)}^{2m}.\]
Since $r > 2$, the result follows from Lemma \ref{holder nonlocal gradient}.
\end{proof}

\begin{cor} \label{T1 well defined}
Assume that \eqref{A1} holds. Then $T_1$ is well defined.
\end{cor}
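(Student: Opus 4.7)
The plan is essentially a one-step deduction from Lemma \ref{lemma1map1} combined with the definition of $E_1$ and the linear theory already established in Section \ref{3}. For an arbitrary $\varphi \in E_1$, the defining inequality of $E_1$ yields $\|\varphi\|_{W_0^{s+\epsilon,r}(\Omega)} \leq l^{1/3}$, and feeding this into Lemma \ref{lemma1map1} gives the bound
\begin{equation*}
\|\varphi\, \mathbb{D}_s^2(\varphi)\|_{L^m(\Omega)} \leq C_{11}\, \|\varphi\|_{W_0^{s+\epsilon,r}(\Omega)}^{3} \leq C_{11}\, l.
\end{equation*}

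Using $\mu \in L^{\infty}(\Omega)$ and $f \in L^{m}(\Omega)$, this shows that the right-hand side of \eqref{map2} belongs to $L^{m}(\Omega)$ with explicit norm control; since $m \geq 1$ and $\Omega$ is bounded, it belongs a fortiori to $L^{1}(\Omega)$. Thus Lemma \ref{LPPS weak solution regularity} applies and produces a unique weak solution $u$ to \eqref{map2}, while Proposition \ref{regularity} (item 1, with $t = s$) additionally guarantees $u \in W_0^{s,1}(\Omega)$. Therefore $T_1(\varphi) := u$ is unambiguously defined as an element of $W_0^{s,1}(\Omega)$, which is exactly what the corollary asserts.

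There is no real obstacle here; the statement is a direct corollary of Lemma \ref{lemma1map1}, modulo invoking the standard existence/uniqueness and regularity results for the linear fractional Poisson problem with $L^1$ data. The only point worth double-checking is that the exponent $r = 3mN/(N+ms)$ was chosen precisely so that Lemma \ref{lemma1map1} produces an $L^m$-bound (and not merely an $L^1$-bound), so that the same set $E_1$ will later be stable under $T_1$ via an application of Corollary \ref{corollary regularity-3}; for the present statement, however, only the weaker consequence ``$L^1$-data'' is used.
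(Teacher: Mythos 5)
Your argument is correct and follows essentially the same route as the paper: apply Lemma \ref{lemma1map1} to control $\|\varphi\,\mathbb{D}_s^2(\varphi)\|_{L^m(\Omega)}$ on $E_1$, conclude that the right-hand side of \eqref{map2} lies in $L^1(\Omega)$, and then invoke Lemma \ref{LPPS weak solution regularity} together with Proposition \ref{regularity} to obtain a unique weak solution in $W_0^{s,1}(\Omega)$. The paper states this more tersely by referring back to the proof of Lemma \ref{T well defined}, but the content is identical.
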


\begin{proof}
Since $\Omega$ is a bounded domain and $m > \frac{N}{2s} > 1$ the result follows from Lemma \ref{lemma1map1} arguing as in the proof of Lemma \ref{T well defined}.
\end{proof}

\begin{lemma} \label{T1 subset T1}
Assume \eqref{A1} and let $0 < \lambda \leq \lambda^{\ast}$. Then $T_1(E_1) \subset E_1$.
\end{lemma}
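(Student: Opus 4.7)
The plan is to mirror the argument used for $T$ in Lemma \ref{TE subset E}, replacing the role of the ``quadratic'' nonlocal gradient by the ``cubic'' one provided by Lemma \ref{lemma1map1}, and replacing Proposition \ref{corollary regularity} by Proposition \ref{corollary regularity-3} (this is why $t = s+\epsilon$ was chosen in $(s,1)$ and why the admissible range of $m$ is $(N/2s,N/s)$).

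First I would fix an arbitrary $\varphi \in E_1$ and set $u = T_1(\varphi)$. Since $m \in (N/2s,N/s)$ and $r < \frac{mN}{N-m(s-\epsilon)} = \frac{mN}{N-m(2s-(s+\epsilon))}$, Proposition \ref{corollary regularity-3} applies with $t = s+\epsilon$ and $p = r$, so that
\[
\left(\iint_{D_{\Omega}}\frac{|u(x)-u(y)|^{r}}{|x-y|^{N+(s+\epsilon)r}}\,dxdy\right)^{\!\frac{1}{r}}
\leq C_2\,\bigl\|\mu(x)\,\varphi\,\mathbb{D}_s^{2}(\varphi)+\lambda f(x)\bigr\|_{L^m(\Omega)}.
\]
Using $\mu \in L^\infty(\Omega)$ and $0 < \lambda \leq \lambda^{\ast}$, I would split this estimate via the triangle inequality as
\[
\left(\iint_{D_{\Omega}}\frac{|u(x)-u(y)|^{r}}{|x-y|^{N+(s+\epsilon)r}}\,dxdy\right)^{\!\frac{1}{r}}
\leq C_2\,\|\mu\|_\infty\,\bigl\|\varphi\,\mathbb{D}_s^{2}(\varphi)\bigr\|_{L^m(\Omega)}
+ C_2\,\lambda^{\ast}\,\|f\|_{L^m(\Omega)}.
\]

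Next I would invoke Lemma \ref{lemma1map1}, which bounds the cubic quantity by the $W_0^{s+\epsilon,r}$-seminorm of $\varphi$:
\[
\bigl\|\varphi\,\mathbb{D}_s^{2}(\varphi)\bigr\|_{L^m(\Omega)}
\leq C_{11}\,\|\varphi\|_{W_0^{s+\epsilon,r}(\Omega)}^{3}.
\]
Since $\varphi \in E_1$ gives $\|\varphi\|_{W_0^{s+\epsilon,r}(\Omega)}^{3} \leq l$, I obtain $\|\varphi\,\mathbb{D}_s^{2}(\varphi)\|_{L^m(\Omega)} \leq C_{11}\,l$. Plugging this back in and then invoking the defining identity \eqref{lmap1} for $l$ yields
\[
\left(\iint_{D_{\Omega}}\frac{|u(x)-u(y)|^{r}}{|x-y|^{N+(s+\epsilon)r}}\,dxdy\right)^{\!\frac{1}{r}}
\leq C_2\bigl(C_{11}\|\mu\|_\infty\,l + \lambda^{\ast}\|f\|_{L^m(\Omega)}\bigr) = l^{\frac{1}{3}},
\]
that is, the seminorm cube is bounded by $l$, which is exactly the condition defining $E_1$. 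Finally, to confirm $u \in W_0^{s,1}(\Omega)$, I would note that $\mu\,\varphi\,\mathbb{D}_s^2(\varphi) + \lambda f \in L^m(\Omega) \subset L^1(\Omega)$ and apply Proposition \ref{regularity}, so $u \in E_1$.

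The only subtlety, rather than a real obstacle, lies in matching the exponents: one needs $t = s+\epsilon \in (s,1)$ to lie in the range where Proposition \ref{corollary regularity-3} is valid and, simultaneously, $r$ strictly below $\frac{mN}{N-m(2s-t)}$, while the algebraic balance defining $l$ must be consistent with $\lambda^{\ast}$. Both conditions were built into the definitions of $r$, $\epsilon$, $\lambda^{\ast}$ and $l$ preceding the statement, so the verification reduces to the chain of inequalities above.
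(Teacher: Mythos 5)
Your proposal is correct and follows essentially the same route as the paper: apply Proposition \ref{corollary regularity-3} with $t=s+\epsilon$, $p=r$, bound $\|\varphi\,\mathbb{D}_s^2(\varphi)\|_{L^m(\Omega)}$ via Lemma \ref{lemma1map1} and the membership $\varphi\in E_1$, and close the loop using the defining identity \eqref{lmap1} for $l$, finally invoking Proposition \ref{regularity} for $u\in W_0^{s,1}(\Omega)$. The only difference is that you spell out the triangle-inequality step and the exponent bookkeeping a bit more explicitly than the paper does.
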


\begin{proof}
Let us consider an arbitrary $\varphi \in E$ and define $u = T_1(\varphi)$. By Corollary \ref{corollary regularity-3}, since that $0 < \lambda \leq \lambda^{\ast}$, we have that
\begin{equation} \label{ineq10map1}
\begin{aligned}
\left( \iint_{D_{\Omega}} \frac{|u(x)-u(y)|^r}{|x-y|^{N+(s+\epsilon)r}} dx dy \right)^{\frac{1}{r}}  \leq C_2 \|\mu\|_{\infty} \big \|\varphi\, \mathbb{D}_s^2(\varphi) \big \|_{L^m(\Omega)} + C_2 \lambda^{\ast} \|f\|_{L^m(\Omega)}.
\end{aligned}
\end{equation}
Hence, since $\varphi \in E$, by Lemma \ref{lemma1map1} and \eqref{lmap1}, it follows that
\[ \left( \iint_{D_{\Omega}} \frac{|u(x)-u(y)|^r}{|x-y|^{N+(s+\epsilon)r}} dx dy \right)^{\frac{1}{r}} \leq C_2 (C_{11} \|\mu\|_{\infty}\, l + \lambda^{\ast} \|f\|_{L^m(\Omega)} )  = l^{\frac{1}{3}}. \]
Thus, as by Proposition \ref{regularity} we also know that $u \in W_0^{s,1}(\Omega)$, we conclude that $u \in E_1$ and so, that $T_1(E_1) \subset E_1$.
\end{proof}

\begin{lemma} \label{T1 continuous}
Assume \eqref{A1}. Then $T_1$ is continuous.
\end{lemma}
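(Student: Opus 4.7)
The plan is to imitate the proof of Lemma \ref{T continuous}, with the extra factor $\varphi$ in the nonlinearity absorbed using the higher regularity already encoded in the definition of $E_1$. Let $\{\varphi_n\}\subset E_1$ satisfy $\varphi_n\to\varphi$ in $W_0^{s,1}(\Omega)$, set $u_n:=T_1(\varphi_n)$ and $u:=T_1(\varphi)$, and put
\[
g_n(x):=\mu(x)\,\varphi_n\,\mathbb{D}_s^2(\varphi_n)+\lambda f(x),\qquad g(x):=\mu(x)\,\varphi\,\mathbb{D}_s^2(\varphi)+\lambda f(x).
\]
Exactly as in Lemma \ref{T continuous}, by Proposition \ref{convergence} it is enough to prove $g_n\to g$ in $L^1(\Omega)$, since this yields $u_n\to u$ in $W_0^{s,p}(\Omega)$ for every $1\le p<N/(N-s)$, and in particular in $W_0^{s,1}(\Omega)$.

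To prove $L^1$-convergence I would write
\[
\varphi_n\mathbb{D}_s^2(\varphi_n)-\varphi\,\mathbb{D}_s^2(\varphi)=(\varphi_n-\varphi)\,\mathbb{D}_s^2(\varphi)+\varphi_n\bigl(\mathbb{D}_s^2(\varphi_n)-\mathbb{D}_s^2(\varphi)\bigr).
\]
The crucial new ingredient, compared with Lemma \ref{T continuous}, is a uniform $L^\infty$-control on the elements of $E_1$: with $r=3mN/(N+ms)$ and $m>N/(2s)$ one checks that $(s+\epsilon)r>sr>N$, so by \cite[Theorem 8.2]{DN_P_V_2012} the inclusion $W_0^{s+\epsilon,r}(\Omega)\hookrightarrow\mathcal{C}^{0,\beta}(\overline\Omega)$ is compact for some $\beta>0$. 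Combined with $\varphi_n\to\varphi$ in $L^1(\Omega)$, a Urysohn-type subsequence argument then upgrades the convergence to $\varphi_n\to\varphi$ uniformly on $\overline\Omega$, and the sequence $\{\varphi_n\}$ is uniformly bounded in $L^\infty(\Omega)$.

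With this at hand, the first term is estimated by
\[
\|(\varphi_n-\varphi)\,\mathbb{D}_s^2(\varphi)\|_{L^1(\Omega)}\le \|\varphi_n-\varphi\|_{L^\infty(\Omega)}\,\|\mathbb{D}_s^2(\varphi)\|_{L^1(\Omega)}\longrightarrow 0,
\]
where the finiteness of $\|\mathbb{D}_s^2(\varphi)\|_{L^1(\Omega)}$ comes from Lemma \ref{holder nonlocal gradient} applied to $\varphi\in E_1$. For the second term, the uniform $L^\infty$-bound gives
\[
\|\varphi_n\bigl(\mathbb{D}_s^2(\varphi_n)-\mathbb{D}_s^2(\varphi)\bigr)\|_{L^1(\Omega)}\le \sup_n\|\varphi_n\|_{L^\infty(\Omega)}\,\|\mathbb{D}_s^2(\varphi_n)-\mathbb{D}_s^2(\varphi)\|_{L^1(\Omega)},
\]
and the fact that $\|\mathbb{D}_s^2(\varphi_n)-\mathbb{D}_s^2(\varphi)\|_{L^1(\Omega)}\to 0$ is exactly what Claims 1 and 2 in the proof of Lemma \ref{T continuous} establish, using only the boundedness of $\{\varphi_n\}$ in $W_0^{s+\epsilon,r}(\Omega)$ and the convergence $\varphi_n\to\varphi$ in $W_0^{s,1}(\Omega)$, both of which hold here.

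Multiplying by $\mu\in L^\infty(\Omega)$ preserves $L^1$-convergence, so $g_n\to g$ in $L^1(\Omega)$ and Proposition \ref{convergence} closes the argument. The only substantive obstacle, relative to Lemma \ref{T continuous}, is the extra factor $\varphi$ in the nonlinearity; handling it is precisely why the choice $r=3mN/(N+ms)$ was made above the Sobolev–Morrey threshold $N/s$, so that elements of $E_1$ automatically sit in $\mathcal{C}^{0,\beta}(\overline\Omega)$ with uniform bound.
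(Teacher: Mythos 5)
Your proof is correct and mirrors the paper's argument in its essential structure: both reduce to $L^1$-convergence of the nonlinearity via Proposition~\ref{convergence}, both use the same algebraic decomposition of $\varphi_n\mathbb{D}_s^2(\varphi_n)-\varphi\,\mathbb{D}_s^2(\varphi)$, and both hinge on the observation that $sr>N$ (equivalently $ms>N/2$) forces a uniform $L^\infty$-bound on $E_1$. Where you diverge is in how you exploit this bound for the term $(\varphi_n-\varphi)\,\mathbb{D}_s^2(\varphi)$: you upgrade $\varphi_n\to\varphi$ all the way to uniform convergence on $\overline\Omega$ via the compactness of $W_0^{s+\epsilon,r}(\Omega)\hookrightarrow\mathcal{C}^{0,\beta}(\overline\Omega)$ together with a Urysohn subsequence argument, and then estimate by $\|\varphi_n-\varphi\|_{L^\infty}\,\|\mathbb{D}_s^2(\varphi)\|_{L^1}$. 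The paper instead stays at the level of the $L^\infty$-bound plus $\varphi_n\to\varphi$ a.e., invokes Vitali to get $\varphi_n\to\varphi$ in $L^\alpha(\Omega)$ for every finite $\alpha$, and closes with the H\"older pairing $\|\mathbb{D}_s^2(\varphi)\|_{L^m}\,\|\varphi_n-\varphi\|_{L^{m'}}$. Your route gives a slightly stronger intermediate statement (uniform convergence) at the cost of invoking a compact embedding and a subsequence argument; the paper's route is marginally more economical since it never needs compactness, only Morrey boundedness and Vitali. Both are fully rigorous; your handling of the second term by quoting Claims 1 and 2 of Lemma~\ref{T continuous} together with the uniform $L^\infty$-bound is exactly what the paper does as well.
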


\begin{proof}
Let $\{\varphi_n\} \subset E$ be a sequence such that $\varphi_n \to \varphi$ in $W_0^{s,1}(\Omega)$ and define $u_n = T_1(\varphi_n)$, for all $n \in \N$, and $u = T_1(\varphi)$. Arguing as in the proof of Lemma \ref{T continuous}, we just have to prove that
\begin{equation} \label{L1 convergence harmonic maps}
\varphi_n \mathbb{D}_s^2(\varphi_n) \to \varphi\, \mathbb{D}_s^2(\varphi), \quad \textup{ in } L^1(\Omega).
\end{equation}
\medbreak
First observe that, since $r > \frac{N}{s} > \frac{N}{s+\epsilon}$, for all $\varphi \in E_1$, it follows that
\begin{equation}
\|\varphi\|_{\Linfty} \leq C \iint_{D_{\Omega}} \frac{|\varphi(x)-\varphi(y)|^r}{|x-y|^{N+(s+\epsilon)r}} dx dy \leq l^{\frac{r}{3}}.
\end{equation}
Hence, since $\varphi_n \to \varphi$ in $W_0^{s,1}(\Omega)$, by Vitali's Convergence Theorem we deduce that $\varphi_n \to \varphi$ in $L^{\alpha}(\Omega)$ for all $1 \leq \alpha < \infty$.
\medbreak
Next, observe that
\begin{equation}
\begin{aligned}
\| \varphi_n \mathbb{D}_s^2(\varphi_n) - \varphi \mathbb{D}_s^2(\varphi)\|_{L^1(\Omega)} & = \| \varphi_n (\mathbb{D}_s^2(\varphi_n) - \mathbb{D}_s^2(\varphi)) + \mathbb{D}_s^2(\varphi) (\varphi_n - \varphi) \|_{L^1(\Omega)} \\
& \leq \| \varphi_n (\mathbb{D}_s^2(\varphi_n) - \mathbb{D}_s^2(\varphi))\|_{L^1(\Omega)} + \| \mathbb{D}_s^2(\varphi) (\varphi_n - \varphi) \|_{L^1(\Omega)}\\
& \leq  \|\varphi_n\|_{\infty} \|\mathbb{D}_s^2(\varphi_n) - \mathbb{D}_s^2(\varphi)\|_{L^1(\Omega)} + \| \mathbb{D}_s^2(\varphi)\|_{L^m(\Omega)} \|\varphi_n - \varphi\|_{L^{m'}(\Omega)} \\
& =: I_1 + I_2
\end{aligned}
\end{equation}
Then, arguing exactly as in Lemma \ref{T continuous} and using that $\|\varphi_n\|_{\infty} \leq C$ (independent of $n$) we deduce that $I_1 \to 0$. On the other hand, we know that $\|\mathbb{D}_s^2(\varphi)\|_{L^m(\Omega)} < \infty$. Hence, since $\varphi_n \to \varphi$ in $L^{\alpha}(\Omega)$ for all $1 \leq \alpha < \infty$, we also obtain that $I_2 \to 0$. We then conclude that \eqref{L1 convergence harmonic maps} holds, as desired.
\end{proof}

\begin{proof}[\textbf{Proof of Theorem \ref{main th existence map1}}]
Observe that the compactness of $T_1$ follows arguing exactly as in Lemma \ref{T compact}. Hence, since $E_1$ is a closed convex set of $W_0^{s,1}(\Omega)$ and, by Lemmas \ref{T1 subset T1}, \ref{T1 well defined} and \ref{T1 continuous} we know that $T_1$ is well defined, continuous and satisfies $T_1(E_1) \subset E_1$, we can apply the Schauder fixed point Theorem to obtain $u \in E_1$ such that $T_1(u) = u$. Thus, we conclude that \eqref{map1} has a weak solution for all $0 < \lambda \leq \lambda^{\ast}$. Finally, since $u \in W_0^{s,1}(\Omega) \cap W_0^{s,r}(\Omega)$ for some $1 < 2 < r$, by Lemma \ref{BM interpolation} we deduce that $u \in W_0^{s,2}(\Omega)$. Moreover, since $r > N/s$,  by \cite[Theorem 8.2]{DN_P_V_2012}, we know that every $\varphi \in E_1$ belongs to $\mathcal{C}^{0,\alpha}(\Omega)$ for some $\alpha > 0$. 
\end{proof}

\section{Proofs of Theorems \ref{main th non-existence} and \ref{optimality}} \label{5}

In this section we prove Theorems \ref{main th non-existence} and \ref{optimality}. The aim of these theorems is to justify the hypotheses considered in Theorem \ref{main th existence}. First we prove that \eqref{Plambda} has no solutions for $\lambda$ large and so, that the smallness condition is somehow necessary to have existence of solution.

\begin{proof}[\textbf{Proof of Theorem \ref{main th non-existence}}]
Assume that \eqref{Plambda} has a solution $u \in W_0^{s,2}(\Omega)$ and let $\phi \in \mathcal{C}_0^{\infty}(\Omega)$ be an arbitrary function such that
\[ \int_{\Omega} f(x) \phi^2(x) dx > 0,\]
Considering $\phi^{2}$ as test function in \eqref{Plambda} we observe that
\begin{equation} \label{eq 4.1}
\int_{\Omega} \sLap u \, \phi^2(x) dx = \int_{\Omega} \mu(x) \int_{\RN} \frac{|u(x)-u(y)|^{2}}{|x-y|^{N+2s}} \phi^{2}(x) dy dx + \lambda \int_{\Omega} f(x) \phi^2(x)dx.
\end{equation}
Now, on one hand, since $\mu(x) \geq \mu_1 > 0$ and $\mathbb{D}_s^2$ is symmetric in $x,y$, it follows that
\begin{equation} \label{eq 4.2}
\begin{aligned}
\int_{\Omega} \mu(x) & \int_{\RN} \frac{|u(x)-u(y)|^{2}}{|x-y|^{N+2s}} \phi^2(x) dy dx = \iint_{D_{\Omega}} \mu(x) \frac{|u(x)-u(y)|^{2}}{|x-y|^{N+2s}} \phi^2(x) dy dx  \\
& \geq \mu_1 \iint_{D_{\Omega}}  \frac{|u(x)-u(y)|^{2}}{|x-y|^{N+2s}} \phi^2(x) dy dx  \\
& = \frac{\mu_1}{2} \iint_{D_{\Omega}} \frac{|u(x)-u(y)|^{2}}{|x-y|^{N+2s}} \phi^2(x) dy dx   + \frac{\mu_1}{2} \iint_{D_{\Omega}} \frac{|u(x)-u(y)|^{2}}{|x-y|^{N+2s}} \phi^2(y) dy dx  \\
& \geq  \frac{\mu_1}{4} \iint_{D_{\Omega}} \frac{|u(x)-u(y)|^{2}}{|x-y|^{N+2s}} (\phi(x)+\phi(y))^{2} dy dx.
\end{aligned}
\end{equation}
On the other hand, by Young's inequality, it follows that
\begin{equation} \label{eq 4.3}
\begin{aligned}
\int_{\Omega} \sLap u\, \phi^2(x)dx & = \iint_{D_{\Omega}} \frac{(u(x)-u(y))(\phi^{2}(x) - \phi^{2}(y))}{|x-y|^{N+2s}} dy dx\\
& = \iint_{D_{\Omega}} \frac{(u(x)-u(y))(\phi(x)-\phi(y))(\phi(x)+\phi(y))}{|x-y|^{N+2s}} dydx \\
& \leq \iint_{D_{\Omega}} \frac{|u(x)-u(y)| |\phi(x)+\phi(y)|}{|x-y|^{ \frac{N}{2}+s}} \cdot \frac{|\phi(x)-\phi(y)|}{|x-y|^{ \frac{N}{2}+s}} dydx \\
& \leq \frac{\mu_1}{4} \iint_{D_{\Omega}} \frac{|u(x)-u(y)|^{2}}{|x-y|^{N+2s}} (\phi(x)+\phi(y))^{2} dy dx + \frac{1}{\mu_1} \iint_{D_{\Omega}} \frac{|\phi(x)-\phi(y)|^{2}}{|x-y|^{N+2s'}} dy dx
\end{aligned}
\end{equation}
Hence, substituting \eqref{eq 4.2} and \eqref{eq 4.3} to \eqref{eq 4.1}, we deduce that, if \eqref{Plambda} has a solution, then
\begin{equation} \label{last1}
\frac{1}{\mu_1} \iint_{D_{\Omega}} \frac{|\phi(x)-\phi(y)|^{2}}{|x-y|^{N+2s}} dy dx \geq \lambda \int_{\Omega} f(x) \phi^2(x) dx,
\end{equation}
which gives a contradiction for $\lambda$ large enough.
\end{proof}

\medbreak

Now, we prove Theorem \ref{optimality}. This theorem shows that the regularity considered on $f$ is almost optimal. Just the limit case $f \in L^{\frac{N}{2s}}(\Omega)$ remains open.

\begin{proof}[\textbf{Proof of Theorem \ref{optimality}}]
Without loss of generality we choose a bounded domain $\Omega$ with boundary $\partial \Omega$ of class $\mathcal{C}^2$ such that $0 \in \Omega$. Consider then
\begin{equation} \label{op1}
f(x) = \frac{1}{|x|^{\frac{N-\epsilon}{m}}},
\end{equation}
for some $\epsilon \in (0,1)$ to be chosen later and observe that, since $\Omega$ is bounded, $f \in L^m(\Omega)$.
\medbreak
We assume by contradiction that, for all $\epsilon > 0$, there exists $\lambda_{\epsilon} > 0$ such that \eqref{Plambda} has a solution $u \in W_0^{s,2}(\Omega)$. Arguing as in the proof of Theorem \ref{main th non-existence}, we conclude that, for all $\phi \in \mathcal{C}_0^{\infty}(\Omega) \setminus \{0\}$,
\begin{equation} \label{op2}
\begin{aligned}
\frac{1}{\mu_1} \iint_{D_{\Omega}}\frac{|\phi(x)-\phi(y)|^{2}}{|x-y|^{N+2s}} dy dx \geq \lambda_\e \int_{\Omega} f(x) \phi^2(x)dx =  \lambda_{\epsilon} \int_{\Omega} \frac{\phi^2(x)}{|x|^{\frac{N-\epsilon}{m}}} dx.
\end{aligned}
\end{equation}
Thus, we deduce that
\begin{equation} \label{op3}
0 < \mu_1\lambda_{\epsilon} \inf \left\{ \dfrac{\dyle \iint_{D_{\Omega}}\frac{|\phi(x)-\phi(y)|^{2}}{|x-y|^{N+2s}} dy dx}{\dyle\int_{\Omega} \frac{\phi^2(x)}{|x|^{\frac{N-\e}{m}}}dx}: \phi \in \mathcal{C}_0^{\infty}(\Omega) \setminus \{0\} \right\}.
\end{equation}
Nevertheless, since $m < \frac{N}{2s}$, we can choose $\epsilon > 0$ small enough to ensure that $\frac{N-\epsilon}{m} > 2s$. In that case, by Proposition \ref{cor_Hardy}, 2), we have that
\[ \inf \left\{ \dfrac{\dyle \iint_{D_{\Omega}}\frac{|\phi(x)-\phi(y)|^{2}}{|x-y|^{N+2s}} dy dx}{\dyle\int_{\Omega} \frac{|\phi(x)|^{2}}{|x|^{\frac{N-\e}{m}}}dx}: \phi \in \mathcal{C}_0^{\infty}(\Omega) \setminus \{0\} \right\} = 0,\]
which contradicts \eqref{op3}. Hence, the result follows.
\end{proof}

\section{Proofs of Theorems \ref{main th Qlambda} and \ref{maint th Rlambda}} \label{6}

This section is devoted to the proofs of Theorems \ref{main th Qlambda} and \ref{maint th Rlambda}. First, having at hand Proposition \ref{regu-grad1}, we prove Theorem \ref{main th Qlambda}  using again a fixed point argument. The proof is similar to the ones performed in Section \ref{4}. Hence, we skip some details. 
\medbreak
Since $\Omega$ is bounded, without loss of generality, we assume that $1 \leq m < \frac{N}{s}$. Also, if $\lambda f \equiv 0$, it follows that $u \equiv 0$ is a solution to \eqref{Qlambda} and, if $\mu \equiv 0$, \eqref{Qlambda} reduces to \eqref{linearEq}. Hence, we may also assume that $\|\mu\|_{\infty} \neq 0$ and $\|f\|_{L^m(\Omega)} \neq 0$.
\medbreak
Next, we fix some notation that will be used throughout the section. First, we fix $r = r(m,s,q) > 0$ such that
\[ 1 < qm < r < \frac{mN}{N-ms},\]
$C_3$ the constant given by Proposition \ref{regu-grad1} with $p = r$ and
\[ \lambda^{\ast} = \frac{q-1}{q\|f\|_{L^m(\Omega)}} \left( \frac{1}{q C_3^q |\Omega|^{\frac{r-qm}{r}} \|\mu\|_{\infty} } \right)^{\frac{1}{q-1}}.\]
Then, by the definition of $\lambda^{\ast}$ and Lemma \ref{Lemma g}, we know that there exists an unique $l \in (0,\infty)$ such that
\begin{equation} \label{l section 5}
C_3 ( \|\mu\|_{L^{\infty}(\Omega)} |\O|^{\frac{r-qm}{mr}} l + \lambda^{\ast} \|f\|_{L^m(\Omega)} ) = l^{\frac{1}{q}}.
\end{equation}
With the above constants fixed, we introduce
\[ E_2:= \left\{ v \in W_0^{s,1}(\Omega): \|(-\Delta)^{\frac{s}{2}}v\|_{L^r(\Omega)} \leq l^{\frac{1}{q}} \right\},\]
and observe that $E_2$ is a closed convex set of $W_0^{s,1}(\O)$. Then, we define $T_2: E_2 \to W_0^{s,1}(\Omega)$ by $T_2(\varphi) = u$ with $u$ the unique weak solution to
\begin{equation} \label{fixed point problem section 5}
\left\{
\begin{aligned}
(-\Delta)^s u & = \mu(x)|(-\Delta)^{\frac{s}{2}} \varphi|^q + \lambda f(x)\,, & \quad \textup{ in } \Omega,\\
u & = 0\,, & \quad \textup{ in } \RN \setminus \Omega,
\end{aligned}
\right.
\end{equation}
and observe that \eqref{Qlambda} is equivalent to the fixed point problem $u = T_2(u)$. Hence, we shall show that $T_2$ has a fixed point.
\medbreak
\begin{lemma} \label{T2 wd subset compact}
Assume \eqref{B1} and let $0 < \lambda \leq \lambda^{\ast}$. Then $T_2$ is well defined, $T_2(E_2) \subset E_2$ and  $T_2$ is compact.
\end{lemma}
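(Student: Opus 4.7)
The plan is to adapt the three-part argument from Subsection~\ref{4.1} (Lemmas~\ref{T well defined}, \ref{TE subset E}, \ref{T compact}), replacing the regularity result for the $W_0^{s+\epsilon,r}$ norm by the analogous statement for $\|(-\Delta)^{s/2}\cdot\|_{L^r(\Omega)}$, namely Proposition~\ref{regu-grad1} with $t=s$ and $p=r$. The key observation, used throughout, is that the choice $qm<r<\frac{mN}{N-ms}$ produces the relation $r/q>m$, which lets us pass from an $L^r$ estimate on $(-\Delta)^{s/2}\varphi$ to an $L^m$ estimate on $\mu(x)|(-\Delta)^{s/2}\varphi|^q$ by a single application of H\"older's inequality on the bounded domain~$\Omega$.

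For well-definedness, I would fix $\varphi\in E_2$ and observe that
$$\bigl\||(-\Delta)^{s/2}\varphi|^q\bigr\|_{L^m(\Omega)}=\|(-\Delta)^{s/2}\varphi\|_{L^{qm}(\Omega)}^q\leq |\Omega|^{\frac{r-qm}{rm}}\|(-\Delta)^{s/2}\varphi\|_{L^r(\Omega)}^q\leq |\Omega|^{\frac{r-qm}{rm}}\,l\,,$$
so the right-hand side of \eqref{fixed point problem section 5} belongs to $L^m(\Omega)\subset L^1(\Omega)$, and Lemma~\ref{LPPS weak solution regularity} yields a unique weak solution $u\in W_0^{s,1}(\Omega)$ (after combining with Proposition~\ref{regularity}). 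For the invariance $T_2(E_2)\subset E_2$, I would apply Proposition~\ref{regu-grad1} with $t=s$, $p=r$ (the range $qm<r<\frac{mN}{N-ms}$ places $r$ inside the admissible range for $1\le m<N/s$), then use the triangle inequality together with the H\"older estimate above to obtain
$$\|(-\Delta)^{s/2}u\|_{L^r(\Omega)}\leq C_3\|\mu\|_{\infty}|\Omega|^{\frac{r-qm}{rm}}\,l+C_3\lambda^{\ast}\|f\|_{L^m(\Omega)}=l^{1/q},$$
by the very definition \eqref{l section 5} of $l$. Since $u\in W_0^{s,1}(\Omega)$, this shows $u\in E_2$.

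For the compactness of $T_2$, I would take a sequence $\{\varphi_n\}\subset E_2$ bounded in $W_0^{s,1}(\Omega)$ and set
$$g_n(x):=\mu(x)|(-\Delta)^{s/2}\varphi_n|^q+\lambda f(x).$$
The estimate used in the well-definedness step gives $\{g_n\}$ bounded in $L^m(\Omega)\subset L^1(\Omega)$. Writing $u_n=T_2(\varphi_n)=\mathcal{S}(g_n)$, where $\mathcal{S}$ is the solution operator of \eqref{linearEq}, Proposition~\ref{compactness} asserts that $\mathcal{S}\colon L^1(\Omega)\to W_0^{s,p}(\Omega)$ is compact for every $1\le p<\frac{N}{N-s}$, and in particular for $p=1$. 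Hence $\{u_n\}$ admits a subsequence converging in $W_0^{s,1}(\Omega)$, which is the desired compactness.

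I do not anticipate a serious obstacle: the argument is essentially the one already developed in Section~\ref{4}, with the single point requiring attention being the verification that the chosen exponent $r$ lies in the admissible range of Proposition~\ref{regu-grad1} (so that the crucial estimate on $\|(-\Delta)^{s/2}u\|_{L^r(\Omega)}$ is available) and that the constants match those fixed in \eqref{l section 5}. Continuity of $T_2$ is not asserted in this lemma, so the delicate computation of Lemma~\ref{T continuous} need not be reproduced here; that issue will presumably be handled separately in the sequel.
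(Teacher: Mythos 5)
Your proposal is correct and follows the paper's route exactly: the paper's own proof is a one-line remark that the arguments of Lemmas~\ref{T well defined}, \ref{TE subset E} and \ref{T compact} carry over verbatim once Proposition~\ref{regu-grad1} is substituted for Proposition~\ref{corollary regularity}, and you have supplied precisely those details, with the H\"older exponent, the verification that $r$ lies in the admissible range, and the matching with \eqref{l section 5} all as the authors intend.
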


\begin{proof}
The proof of this lemma follows as in Lemmas \ref{T well defined}, \ref{TE subset E} and \ref{T compact} using Proposition \ref{regu-grad1} instead of Proposition \ref{corollary regularity}.
\end{proof}

\begin{remark}
The only point in the proof of the previous lemma where we use $0 < \lambda \leq \lambda^{\ast}$ is to show that $T_2(E_2) \subset E_2$. The rest holds for every $\lambda \in \R$.
\end{remark}

\begin{lemma} \label{T2 continuous}
Assume that \eqref{B1} holds. Then $T_2$ is continuous.
\end{lemma}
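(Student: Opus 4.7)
The plan is to mirror the structure of Lemma~\ref{T continuous}. Taking a sequence $\{\varphi_n\} \subset E_2$ with $\varphi_n \to \varphi$ in $W_0^{s,1}(\Omega)$, setting $u_n = T_2(\varphi_n)$ and $u = T_2(\varphi)$, Proposition~\ref{convergence} reduces the desired convergence $u_n \to u$ in $W_0^{s,1}(\Omega)$ to the claim
\[
g_n(x) := \mu(x)|(-\Delta)^{s/2}\varphi_n|^q + \lambda f(x) \to \mu(x)|(-\Delta)^{s/2}\varphi|^q + \lambda f(x) \quad \text{in } L^1(\Omega).
\]
Since $\mu \in L^\infty(\Omega)$, this further reduces to proving $|(-\Delta)^{s/2}\varphi_n|^q \to |(-\Delta)^{s/2}\varphi|^q$ in $L^1(\Omega)$.

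Next, I would invoke the elementary bound $\bigl||a|^q-|b|^q\bigr| \leq q(|a|^{q-1}+|b|^{q-1})|a-b|$ together with H\"older's inequality with the conjugate exponents $r/(q-1)$ and $\sigma := r/(r-q+1)$, both admissible because $r > qm \geq q > q-1 > 0$. Combined with the uniform bounds $\|(-\Delta)^{s/2}\varphi_n\|_{L^r(\Omega)}, \|(-\Delta)^{s/2}\varphi\|_{L^r(\Omega)} \leq l^{1/q}$, which follow from $\varphi_n, \varphi \in E_2$, this yields
\[
\bigl\||(-\Delta)^{s/2}\varphi_n|^q - |(-\Delta)^{s/2}\varphi|^q\bigr\|_{L^1(\Omega)} \leq 2q\,l^{(q-1)/q}\,\|(-\Delta)^{s/2}(\varphi_n-\varphi)\|_{L^\sigma(\Omega)},
\]
so the whole task boils down to proving $\|(-\Delta)^{s/2}(\varphi_n-\varphi)\|_{L^\sigma(\Omega)} \to 0$.

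For this, I would interpolate between $L^1(\Omega)$ and $L^r(\Omega)$. The $L^r$ side is automatic: $\|(-\Delta)^{s/2}(\varphi_n-\varphi)\|_{L^r(\Omega)} \leq 2\,l^{1/q}$ from the $E_2$ membership. The $L^1$ side uses that for $w \in W_0^{s,1}(\Omega)$, Fubini applied to the finite seminorm $\iint_{D_\Omega}|w(x)-w(y)||x-y|^{-N-s}\,dx\,dy$ guarantees that $\int_{\RN}|w(x)-w(y)||x-y|^{-N-s}\,dy$ is finite for a.e.\ $x \in \Omega$, whence the principal-value integral defining $(-\Delta)^{s/2} w$ is dominated a.e.\ by the corresponding absolute-value integral. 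Integrating over $\Omega$ produces $\|(-\Delta)^{s/2} w\|_{L^1(\Omega)} \leq c\|w\|_{W_0^{s,1}(\Omega)}$; applied to $w_n := \varphi_n-\varphi$, this forces $\|(-\Delta)^{s/2} w_n\|_{L^1(\Omega)} \to 0$. Log-convexity of the $L^p$ norms with exponent $\theta = (r-q)/(r-1) \in (0,1)$ then delivers $\|(-\Delta)^{s/2} w_n\|_{L^\sigma(\Omega)} \to 0$, and the continuity of $T_2$ follows.

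The step I expect to be the main obstacle is the $L^1$ control of $(-\Delta)^{s/2} w$ by the $W_0^{s,1}(\Omega)$ norm: the principal value in the definition of $(-\Delta)^{s/2}$ must be handled with care, but once its pointwise a.e.\ domination by the absolute-value integral is justified via Fubini, the remainder of the argument is a direct interpolation between Lebesgue spaces.
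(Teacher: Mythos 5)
Your proposal is correct, shares the paper's opening reduction (pass to $w_n = u_n - u$, invoke Proposition~\ref{convergence} to reduce everything to $L^1(\Omega)$ convergence of the right-hand side), but then diverges in how that $L^1$ convergence is established. The paper applies the Mean Value Theorem together with H\"older at exponents $q$ and $q'$ to bound $\bigl\||(-\Delta)^{s/2}\varphi_n|^q - |(-\Delta)^{s/2}\varphi|^q\bigr\|_{L^1(\Omega)}$ by $C\,\|(-\Delta)^{s/2}(\varphi_n-\varphi)\|_{L^q(\Omega)}$, and then concludes with Vitali's theorem using the uniform $L^r$ bound on $E_2$ for equi-integrability. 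You instead H\"older with exponents $r/(q-1)$ and $\sigma=r/(r-q+1)$, landing on $\|(-\Delta)^{s/2}(\varphi_n-\varphi)\|_{L^\sigma(\Omega)}\to 0$, which you obtain by interpolating the uniform $L^r$ bound against a direct estimate $\|(-\Delta)^{s/2}w\|_{L^1(\Omega)}\leq c\,\|w\|_{W_0^{s,1}(\Omega)}$; that estimate in turn is justified soundly via Fubini on the $W_0^{s,1}$ seminorm and the observation that $(-\Delta)^{s/2}$ carries the exact exponent $N+s$ matching the seminorm.

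The two approaches buy different things. The paper's Vitali route is shorter on the page, but to apply Vitali to $\int_\Omega|(-\Delta)^{s/2}(\varphi_n-\varphi)|^q\,dx$ one needs a.e.\ (or in-measure) convergence of $(-\Delta)^{s/2}(\varphi_n-\varphi)$, whereas the text only records a.e.\ convergence of $\varphi_n-\varphi$ itself; this is repairable (for instance, along a subsequence, precisely via the $L^1$ bound you prove) but is left implicit. Your route avoids Vitali entirely: it produces an explicit quantitative bound $\|(-\Delta)^{s/2}(\varphi_n-\varphi)\|_{L^\sigma(\Omega)}\lesssim\|\varphi_n-\varphi\|_{W_0^{s,1}(\Omega)}^{\theta}$ with $\theta=(r-q)/(r-1)\in(0,1)$, which is arguably cleaner, more self-contained, and in fact slightly stronger (it gives a modulus of continuity for $T_2$ rather than bare sequential continuity). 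Both arguments are valid; yours is a genuine alternative, not merely a paraphrase.
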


\begin{proof}
Let $\{\varphi_n\} \subset E_2$ be a sequence such that $\varphi_n \to \varphi$ in $W_0^{s,1}(\Omega)$ and define $u_n = T_2(\varphi_n)$, for all $n \in \N$, and $u = T_2(\varphi)$. We shall show that $u_n \to u$ in $W_0^{s,1}(\Omega)$. Observe that $w_n = u_n -u $ satisfies
\begin{equation} \label{problem wn section 5}
\left\{
\begin{aligned}
(-\Delta)^s w_n & = \mu(x) \left( |(-\Delta)^{\frac{s}{2}} \varphi_n|^q - |(-\Delta)^{\frac{s}{2}} \varphi|^q \right)\,, & \quad \textup{ in } \Omega,\\
w_n & = 0\,, & \quad \textup{ in } \RN \setminus \Omega.
\end{aligned}
\right.
\end{equation}
Hence, if we show that
\begin{equation}
\mu(x) \left( |(-\Delta)^{\frac{s}{2}} \varphi_n|^q - |(-\Delta)^{\frac{s}{2}} \varphi|^q \right) \to 0\,, \quad \textup{ in } L^1(\Omega),
\end{equation}
the result follows from Proposition \ref{convergence}. Directly, since $\varphi_n, \varphi \in E_2$ and $\mu \in \Linfty$, applying the Mean Value Theorem and H\"older inequality, we deduce that
\begin{equation} \label{ineq L1 section 5}
\left\| \mu(x) \left( |(-\Delta)^{\frac{s}{2}} \varphi_n|^q - |(-\Delta)^{\frac{s}{2}} \varphi|^q \right) \right\|_{L^1(\Omega)} \leq C \left( \int_{\Omega} |(-\Delta)^{\frac{s}{2}}(\varphi_n - \varphi)|^q dx \right)^{\frac{1}{q}},
\end{equation}
where $C$ is a positive constant depending only on $\|\mu\|_{L^{\infty}(\Omega)},\, l,\, q$ and $\Omega$. By \eqref{ineq L1 section 5}, if we show that
\begin{equation} \label{convergenceL1 section 5}
\int_{\O} |(-\Delta)^{\frac{s}{2}}(\varphi_n - \varphi)|^q dx \to 0,
\end{equation}
the continuity of the operator follows from Proposition \ref{convergence}.
\medbreak
Since $\varphi_n \to \varphi$ in $W_0^{s,1}(\Omega)$, it follows that $\varphi_n - \varphi \to 0$ almost everywhere in $\Omega$. Furthermore, observe that, for all measurable subset $\omega \subset \Omega$,  we have that
\[ \int_{\omega} |(-\Delta)^{\frac{s}{2}}(\varphi_n - \varphi)|^q dx \leq 2\,l\, |\omega|^{\frac{r-q}{q}}.\]
Hence, by Vitali's convergence Theorem, \eqref{convergenceL1 section 5} holds and the result follows.
\end{proof}

\begin{proof}[\textbf{Proof of Theorem \ref{main th Qlambda}}]
Since $E_2$ is a closed convex set of $W_0^{s,1}(\Omega)$ and, by Lemmas \ref{T2 wd subset compact} and \ref{T2 continuous}, we know that $T_2$ is continuous, compact and satisfies $T_2(E_2) \subset E_2$, we can apply the Schauder fixed point Theorem to obtain $u \in E_2$ such that $T_2(u) = u$. Thus, we conclude that \eqref{Qlambda} has a weak solution for all $0 < \lambda \leq \lambda^{\ast}$.
\end{proof}

\begin{proof}[\textbf{Proof of Theorem \ref{maint th Rlambda}}]
Having at hand Corollary \ref{corollary regu-grad1}, the result follows arguing as in Theorem \ref{main th Qlambda}.
\end{proof}

\section{Further results and open problems} \label{7}

\noindent We end the paper making some remarks and pointing out some possible extensions of our results.
\subsection{Further results} $ $ \medbreak
\begin{itemize}
\item[1)] In the spirit of the existence results of Section \ref{4}, we can deal with more general nonlocal ``gradient terms''. Actually, we can consider a problem of the form
\begin{equation*}
\left\{
\begin{aligned}
(-\Delta)^s u & = \mu(x)\, (\mathbb{B}_s^{q}(u))^{\alpha} + \lambda f(x)\,, & \quad \textup{ in } \Omega,\\
u & = 0\,, & \quad \textup{ in } \RN \setminus \Omega,
\end{aligned}
\right.
\end{equation*}
\noindent where $1 < \alpha \leq q$, $f$ belongs to a suitable Lebesgue space, $\mu \in \Linfty$ and $\mathbb{B}_s^q$ is given by
\begin{equation*}
\mathbb{B}_s^q (u) = \left( \frac{a_{N,s}}{q}\textup{ p.v.}  \int_{\RN} \frac{|u(x)-u(y)|^q}{|x-y|^{N+sq}} dy \right)^{\frac{1}{q}} \,.
\end{equation*}
The existence of a solution for $\lambda f$ small enough can be obtained. \medbreak

\item[2)] On the line of Section \ref{6}, we can consider a problem of the form
\[
\left\{
\begin{aligned}
(-\Delta)^s u & = \mu(x)|(-\Delta)^{\frac{t}{2}} u|^q + \lambda f(x)\,, & \quad \textup{ in } \Omega,\\
u & = 0\,, & \quad \textup{ in } \RN \setminus \Omega,
\end{aligned}
\right.
\]
under the assumptions
\[
\left\{
\begin{aligned}
& \Omega \subset \RN,\ N \geq 2, \textup{ is a bounded domain with }\partial \Omega \textup{ of class } \mathcal{C}^{2}, \\
& f \in L^m(\Omega) \textup{ for some } m \geq 1 \textup{ and } \mu \in \Linfty,\\
& s \in (1/2,1),\  t \in (0,s],\ \textup{ and }\, 1 < q < \frac{N}{N-m(2s-t)}.
\end{aligned}
\right.
\]
A similar result to Theorem \ref{main th Qlambda} can be obtained.
\end{itemize}

\subsection{Open problems} $ $
\medbreak
\begin{itemize}
\item[1)] The Calder\'on-Zygmund type regularity results proved in Section \ref{3} rely on Lemma \ref{AP gradient regularity}, i.e. on \cite[Lemma 2.15]{A_P_2018}. The restriction $s \in (1/2,1)$ comes from this result. It is an open question if the regularity results of Section \ref{3} hold for $s \in (0,1/2]$. 
\smallbreak
Let us also stress that, if the corresponding regularity results with $s \in (0,1/2]$ were available, our approaches to prove Theorems \ref{main th existence},   \ref{main th existence map1},  \ref{main th Qlambda} and \ref{maint th Rlambda} would directly provide the corresponding results.

\medbreak
\item[2)] In the last years there has been a renewed interest in classical problems of the form
\[ -\Delta u = c(x)u + \mu(x)|\nabla u|^2 + h(x), \quad u \in H_0^1(\Omega) \cap \Linfty.\]
Following \cite{J_S_2013, S_2010}, several works have appeared proving existence and multiplicity results. Does this kind of results hold in the nonlocal case$\,$? To be more precise, let us introduce the Dirichlet problem 
\[
\left\{
\begin{aligned}
(-\Delta)^s u & = c(x) u + \mu(x)\,\mathbb{D}_s^2(u) + \lambda f(x)\,, & \quad \textup{ in } \Omega,\\
u & = 0\,, & \quad \textup{ in } \RN \setminus \Omega,
\end{aligned}
\right.
\]
under the assumption \eqref{A1} and $c \in \Linfty$. It seems interesting to address the following questions: \smallbreak
\begin{itemize}
\item[a)] Does the uniqueness of (smooth) solutions holds for $c(x) \leq 0\,$?
\item[b)] Under the assumption $c(x) \leq \alpha_0 < 0$ a.e. in $\Omega$. It is possible to remove the smallness condition imposed on $\lambda\,$?
\item[c)] It is possible to prove the existence of more than one solution for $c(x) \gneqq 0$, $\mu(x) \geq \mu_1 > 0$ and $\lambda > 0$ small enough$\,$?
\end{itemize}
\end{itemize}

\bibliographystyle{plain}
\bibliography{Bibliography}

\end{document}